\documentclass[a4paper,11pt]{article}

\usepackage{graphicx}
\usepackage{tikz}

\usepackage{subcaption}

 \usetikzlibrary{external} 

\setlength\parindent{0pt}

\usepackage{mathrsfs}
\usepackage{booktabs}
\usepackage{graphicx}

\usepackage{amsmath, amssymb}
\usepackage{amsthm}
\usepackage{enumerate}
\usepackage{mathtools}
\usepackage{tikz-cd}
\usepackage{comment}
\usepackage{color}
\usepackage{hyperref}
\usepackage{soul}

\usepackage{stmaryrd}
\usepackage{soul}

\newcommand{\Spec}{\operatorname{Spec}}
\newcommand{\Aut}{\operatorname{Aut}}
\newcommand{\AAut}{\operatorname{AAut}}

\newcommand{\CAT}{\operatorname{CAT}}

\newcommand{\p}{\operatorname{\mathbb{P}}}

\newcommand{\Z}{\operatorname{\mathbb{Z}}}

\newcommand{\A}{\operatorname{\mathbb{A}}}

\newcommand{\R}{\operatorname{\mathbb{R}}}
\newcommand{\F}{\operatorname{{\mathbb{F}}}}
\newcommand{\kk}{{k}}

\newcommand{\id}{\operatorname{id}}

\newcommand{\Bir}{\operatorname{Bir}}

\newcommand{\PGL}{\operatorname{PGL}}

\newcommand{\T}{\mathcal{T}}
\newcommand{\Sym}{\operatorname{Sym}}

\newcommand{\B}{\mathcal{B}}

\newcommand{\Homeo}{\operatorname{Homeo}}

\newcommand{\FF}{\mathcal{F}}

\DeclareMathOperator{\BBir}{BBir}


\usepackage[shortlabels]{enumitem}
\setlist[enumerate]{label=\rm{(\arabic*)}}
\setlist[enumerate,2]{label=\rm({\it\roman*})}
\setlist[itemize]{label=\raisebox{0.25ex}{\tiny$\bullet$}}

\usepackage{graphicx}
\usepackage{tikz}

\addtolength{\hoffset}{-1cm}
\addtolength{\voffset}{-2cm}
\addtolength{\textwidth}{2cm}
\addtolength{\textheight}{4cm}

\usepackage[T1]{fontenc}
\usepackage{lmodern}

\usepackage{dsfont}

\usepackage{amsmath}
\usepackage{amsthm}
\usepackage{amssymb}
\usepackage{mathrsfs}
\usepackage{graphicx}
\usepackage[all]{xy}

\usepackage{xcolor}

\usepackage{makeidx}

\usepackage{stmaryrd}
\usepackage{caption}

\usepackage{abstract} 

\newtheorem{theorem}{Theorem}[section]

\newtheorem{claim}[theorem]{Claim}
\newtheorem{fact}[theorem]{Fact}

\newtheorem{lemma}[theorem]{Lemma}
\newtheorem{proposition}[theorem]{Proposition}

\theoremstyle{definition}
\newtheorem{definition}[theorem]{Definition}

\newtheorem{remark}[theorem]{Remark}

\def\rquotient#1#2{%
	\makeatletter
	\raise.3ex\hbox{$#1$}/\lower.3ex\hbox{$#2$}%
	\makeatother
}	

\makeatletter
\newcommand{\subjclass}[2][2010]{%
	\let\@oldtitle\@title%
	\gdef\@title{\@oldtitle\footnotetext{#1 \emph{Mathematics subject classification.} #2}}%
}
\newcommand{\keywords}[1]{%
	\let\@@oldtitle\@title%
	\gdef\@title{\@@oldtitle\footnotetext{\emph{Key words and phrases.} #1.}}%
}
\makeatother

\newcommand{\Address}{{
		\bigskip
		\small
		
				\textsc{Institut Montpellierain Alexander Grothendieck, 499-554 Rue du Truel, 34090 Montpellier, France.}\par\nopagebreak
		\textit{E-mail address}: \texttt{anthony.genevois@umontpellier.fr}
\medskip

		\textsc{Department of Mathematics, University of the
			Basque Country UPV/EHU, Leioa, Spain. IKERBASQUE, Basque Foundation for Science, Bilbao, Spain.}\par\nopagebreak
		\textit{E-mail address}: \texttt{annemarguerite.lonjou@ehu.eus}
\medskip

		\textsc{EPFL, SB MATH, Station 8, CH-1015 Lausanne, Switzerland.}\par\nopagebreak
\textit{E-mail address}: \texttt{christian.urech@epfl.ch}
\medskip
		
}}

\subjclass[2010]{14E07; 20F65; 20F67} 
\keywords{Cremona group, Neretin group, CAT(0) cube complexes}

\title{Cremona groups over finite fields, Neretin groups, and non-positively curved cube complexes}
\date{\today}
\author{Anthony Genevois, Anne Lonjou, and Christian Urech}

\subjclass[2010]{14E07; 20F65; 20F67} 
\keywords{Cremona group, Neretin group, CAT(0) cube complexes}

\begin{document}

\maketitle
\begin{abstract}
We show that plane Cremona groups over finite fields embed as dense subgroups into Neretin groups, i.e.\ groups of almost automorphisms of rooted trees. We also show that if the finite base field has even characteristic and contains at least 4 elements, then the permutations induced by birational transformations on rational points of regular projective surfaces are even. 

In a second part, we construct explicit locally compact CAT(0) cube complexes, on which Neretin groups act properly. This allows us to recover in a unified way various results on Neretin groups such as that they are of type $F_{\infty}$. We also  prove a new fixed-point theorem for CAT(0) cube complexes without infinite cubes and use it to deduce a regularisation theorem for plane Cremona groups over finite fields.

\end{abstract}
\tableofcontents
\section{Introduction}

In this article, our goal is to bring together two a priori quite distinct families of groups. Namely, \emph{Cremona groups}, i.e.\ the groups  of birational transformations of projective planes, and \emph{Neretin groups}, i.e.\ the (totally disconnected locally compact) groups of almost automorphisms {$\AAut(\mathcal{T})$} of regular rooted trees {$\mathcal{T}$}. While the first come from algebraic geometry, the second appear in low-dimensional topology and belong to the more general family of \emph{Thompson-like groups}. Neretin groups can be thought of as $p$-adic analogues of the diffeomorphism group of the circle.

\medskip
At first glance, these families of groups seem quite different, and they are from several perspectives. For instance, algebraically speaking, Neretin groups are (uniformly) simple \cite{MR1703086, MR3693109} while Cremona
groups are very far from being simple, since they are acylindrically hyperbolic, which allows us to construct many different normal subgroup (\cite{Cantat-Lamy}, \cite{Lonjou_non_simplicity}). Nevertheless, they both arise as groups of transformations that only partially preserve certain geometric structures. Our goal is to exploit this analogy by transferring techniques from one family of groups to the other, giving new insights for both types of groups.

\paragraph{Neretin groups.} 
Our main contribution to Neretin groups $\mathcal{N}_d:=\AAut(\T_{d})$, where $\T_d$ is the regular rooted tree of degree $d \geq 2$, is the construction of locally compact CAT(0) cube complexes on which they act properly {(as topological groups)}. Even if the existence of proper actions on (a priori not locally compact) CAT(0) cube complexes was already known (see \cite[Section~3.3.3]{le2015geometrie}), transferring the cubulation of Cremona groups from \cite{lonjouthesis, lonjou-urech} into the world of Neretin groups allows us to construct explicit cube complexes, whose geometric structures are tightly connected to the algebraic structures of Neretin groups.

\begin{theorem}\label{thm:IntroCubulation}
For every $d \geq 2$, the Neretin group $\mathcal{N}_d$ acts properly on a locally compact CAT(0) cube complex. Moreover, every vertex-stabiliser equals the automorphism group of some cofinite subforest and, conversely, for each cofinite  subforest, there exists a vertex-stabiliser that is its automorphism group.
\end{theorem}

This construction allows us to recover several results proved separately in the literature in a unified way. Namely, we deduce from Theorem~\ref{thm:IntroCubulation} that Neretin groups are a-T-menable; that their subgroups satisfying Kazhdan's property (T) (or more generally the fixed-point property $(\mathrm{FW}_\mathrm{locfin})$, see Section~\ref{section:FirstApplications}) lie in automorphism groups of cofinite rooted subforests; and that they are of type $F_\infty$ (a finiteness property stronger than being compactly presented, see Section~\ref{section:FirstApplications}). The local finiteness of our cube complexes can also be exploited, allowing us to recover the following result from \cite{le2018commensurated}:

\begin{theorem}\label{thm:IntroPurelyEllipticNeretin}
Let $H \leq \mathcal{N}_d$ be a finitely generated subgroup. If each element of $H$ induces an automorphism of some cofinite subforest, then $H$ entirely lies in the automorphism group of a cofinite subforest. 
\end{theorem}

Theorem~\ref{thm:IntroPurelyEllipticNeretin} is proved by combining Theorem~\ref{thm:IntroCubulation} together with the following fixed-point theorem  from cubical geometry, which is of independent interest. 

\begin{theorem}\label{thm:IntroFixedPointThm}
Let $G$ be a finitely generated group acting on a CAT(0) cube complex $X$ by elliptic isometries. If $X$ has no infinite cube, then $G$ has a global fixed point.
\end{theorem}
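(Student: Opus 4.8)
The plan is to argue by induction on the dimension of the cube complex $X$, exploiting the standard machinery relating elliptic actions to the combinatorial structure of $\CAT(0)$ cube complexes. First I would reduce to the case where $X$ is finite-dimensional: since $X$ has no infinite cube, every cube has dimension at most some $n$, so $X$ is $n$-dimensional, and the induction will be on $n$. The base case $n=0$ is trivial, and $n=1$ (a tree) is the classical fact that a finitely generated group all of whose generators fix a point must fix a point (here one uses that a pair of elliptic isometries of a tree with no common fixed point generates a subgroup containing a hyperbolic element, i.e. the Serre/Tits argument on the bridge between their fixed-point sets). For the inductive step, I would fix a finite generating set $S$ of $G$ and, assuming no global fixed point, derive a contradiction by producing a hyperbolic element of $G$, contradicting that $G$ acts by elliptic isometries.

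The key tool is the theory of hyperplanes. Each $g \in G$ is elliptic, hence its fixed-point set $\mathrm{Fix}(g)$ is a non-empty convex subcomplex. If $\bigcap_{g \in S} \mathrm{Fix}(g) \neq \emptyset$ we are done, so suppose not. Then, by the Helly property for convex subcomplexes of a $\CAT(0)$ cube complex — or rather its failure at a minimal sub-collection — I would extract two elements, or a small finite sub-collection, whose fixed-point sets are pairwise intersecting but have empty total intersection, and find a hyperplane $\hat{\mathfrak{h}}$ separating them in the appropriate sense; more precisely, I would consider a hyperplane $\hat{\mathfrak{h}}$ that is "crossed" by the configuration, pass to one of the two halfspaces, and restrict the action. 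The point of the no-infinite-cube hypothesis enters here: a hyperplane $\hat{\mathfrak{h}}$ in $X$, with its induced cubical structure, is again a $\CAT(0)$ cube complex with no infinite cube, and crucially its dimension is strictly smaller than that of $X$, since a $k$-cube of $\hat{\mathfrak{h}}$ is dual to $k+1$ pairwise-crossing hyperplanes of $X$, giving a $(k+1)$-cube in $X$. Thus I can hope to project the stabiliser of $\hat{\mathfrak{h}}$, or rather the action on the carrier/hyperplane, and invoke the induction hypothesis there.

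The hard part will be setting up the inductive descent cleanly: the natural move is, having no global fixed point, to show that the collection $\{\mathrm{Fix}(g) : g \in S\}$ of pairwise non-disjoint convex subcomplexes (pairwise non-disjoint because any two elliptic isometries generating a group that does contain a hyperbolic would already give the contradiction via the $n=1$ reasoning applied along a combinatorial geodesic, or via Sageev's disc-diagram argument) must, lacking a common point, "surround" some hyperplane $\hat{\mathfrak{h}}$ — meaning every $\mathrm{Fix}(g)$ meets both halfspaces or the carrier. One then shows $G$ preserves $\hat{\mathfrak{h}}$ (a finite-index subgroup does, and passing to it is harmless for the finitely-generated hypothesis), acts on the lower-dimensional cube complex $\hat{\mathfrak{h}}$, and this action is again by elliptic isometries because the restriction of an elliptic isometry fixing points on both sides of $\hat{\mathfrak{h}}$ fixes a point of $\hat{\mathfrak{h}}$ itself. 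Applying the induction hypothesis yields a $G$-fixed point in $\hat{\mathfrak{h}} \subseteq X$, the desired contradiction. The delicate technical points are: (i) justifying that pairwise-intersecting fixed-point sets with empty intersection force the existence of such a separating hyperplane — this is where I expect to need a careful disc-diagram or Helly-type argument rather than a one-line citation; (ii) controlling the dimension drop and the ellipticity of the restricted action; and (iii) handling the reduction to a finite-index subgroup so that finite generation is preserved. I would expect step (i) to be the main obstacle and the place where the no-infinite-cube hypothesis is genuinely used rather than merely convenient.
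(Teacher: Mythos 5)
Your proposal founders on its very first reduction. The hypothesis ``$X$ has no infinite cube'' does \emph{not} imply that $X$ is finite-dimensional: a CAT(0) cube complex can contain cubes of every finite dimension while containing no infinite cube, and this is precisely the situation the theorem is designed for. Indeed, the paper states explicitly that the result ``extends the fixed-point theorem from \cite{Sageev-ends_of_groups} for finite-dimensional CAT(0) cube complexes to a large class of infinite-dimensional CAT(0) cube complexes,'' and the cube complexes $\mathscr{C}$ built for the Neretin groups in Section~\ref{section:MainConstruction} are themselves infinite-dimensional (their descending links are the complexes $\mathcal{I}(d,h)$, whose dimension grows with the height $h$) yet have no infinite cubes --- this is exactly why the new theorem, rather than Sageev's, is needed to prove Theorem~\ref{thm:IntroPurelyEllipticNereting} and Theorem~\ref{thm:CremonaReg}. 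Once you assume $X$ is $n$-dimensional, you are only re-proving the known finite-dimensional statement, and your induction on dimension never gets off the ground for the actual hypothesis.

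There is a second, related gap even within your finite-dimensional sketch: you assert that the fixed-point sets of the generators pairwise intersect ``because any two elliptic isometries generating a group that does contain a hyperbolic would already give the contradiction.'' This is circular --- the possibility you must exclude is precisely that two elliptic isometries generate a group with no common fixed point and no hyperbolic element, which does happen on CAT(0) cube complexes in general (the paper recalls that finitely generated torsion groups can act properly on CAT(0) cube complexes). The paper's proof avoids all of this. It first passes to the convex hull of an orbit, on which $G$ acts with finitely many orbits of hyperplanes (Lemma~\ref{lem:ConvexHullOrbit}, using finite generation), and then splits on whether this complex is bounded for the $\ell^\infty$-type metric $d_\infty$. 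If it is unbounded, a long chain of pairwise non-transverse hyperplanes contains three in the same $G$-orbit, from which one extracts an element with a strictly nested halfspace, hence unbounded orbits --- contradicting ellipticity. If it is bounded, one introduces the depth of halfspaces and the balanced/unbalanced dichotomy; the intersection of the deeper halfspaces of the unbalanced hyperplanes is a $G$-invariant convex subcomplex all of whose hyperplanes are pairwise transverse, hence a cube, which must be finite by hypothesis and yields the fixed point (emptiness of that intersection is excluded via Lemma~\ref{lem:RayDiamInfty}, where the no-infinite-cube hypothesis is used again). I would encourage you to study how the hypothesis enters in those two places, since neither has an analogue in a dimension-induction scheme.
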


Here, an infinite cube refers to {the union of } an infinite increasing sequence $C^0\subset C^1\subset\cdots$ of cubes, where each cube $C^n$ is a combinatorial unit cube with $2^n$ vertices.
		
\medskip

Theorem~\ref{thm:IntroFixedPointThm} extends the fixed-point theorem from \cite{Sageev-ends_of_groups} for finite-dimensional CAT(0) cube complexes to a large class of infinite-dimensional CAT(0) cube complexes. The conclusion of Theorem \ref{thm:IntroFixedPointThm} does not always hold if we drop the assumption that there is no infinite cube. Indeed, there exist finitely generated infinite torsion groups acting without global fixed points on CAT(0) cube complexes (such as the Grigorchuk group \cite{MR3027509, Sageev-ends_of_groups}, Burnside groups \cite{Osajda_2018}, or just wreath products of torsion groups \cite{cornulier_wallings, WreathFW, LampG}).

\paragraph{Cremona groups.} Although most of our constructions make sense over arbitrary fields, their main applications concern Cremona groups over finite fields $\kk$ {denoted by $\Bir_\kk(\p^2)$ instead of $\Bir(\p^2)$ for readers' convenience. }  This is a subject that has attracted substantial interest recently, also because of its connections to cryptography (see for instance \cite{cornulier2013sofic}, \cite{shepherd2021some}, \cite{schneider2020generators}, \cite{schneider2020algebraic}, \cite{Zimmermann-bir_permutations}, \cite{lamy2021generating}). In this work we construct a rooted forest $\FF\p^2(\kk)$, {called the \emph{ rational blow-up forest},} whose vertices are the $\kk$-rational points of $\p^2$ and of all the surfaces obtained by blowing up $\kk$-rational points. One of the reasons why birational transformations of surfaces are well understood, is the fact that they can be factorized into a sequence of blow-ups of points and contractions of curves. This property allows us to construct a natural  injective morphism from $\Bir_\kk(\p^2)$  to the group of almost automorphisms of $\FF\p^2(\kk)$. If the field $\kk$ is finite with $q$ elements, then the rooted forest  $\FF\p^2(\kk)$ turns out to be almost isomorphic to the $(q+1)$-regular rooted tree.
This will lead to a proof of the following result, which displays the close relationship between Cremona groups over finite fields and Neretin groups:

 \begin{theorem}\label{thm:MainNeretinCremona}
 	If $\kk$ is a finite field with $q$ elements, then $\Bir_\kk(\p^2)$ is isomorphic to a dense subgroup of the Neretin group $\mathcal{N}_d$, where $d=q+1$.  
 \end{theorem}

Note that, even if recently the use of tools from geometric group theory in the study of groups of birational transformations of surfaces has been really fruitful, this is the first time that these groups appear as almost automorphism groups of trees or forests.
\medskip

The topology on $\AAut(\T_{d})$  induces a non-discrete topology on  $\Bir_\kk(\p^2)$ that turns it into a Hausdorff topological group {(which is not locally compact)}. In fact,  
Theorem~\ref{thm:MainNeretinCremona} can also be seen as an analogue to a theorem of Koll\'ar and Mangolte, which states that the group of birational transformations of the real projective plane without real indeterminacy points is a dense subgroup of the diffeomorphism group of $\p^2(\R)$ (\cite{kollar2009cremona}). In Remark~\ref{rem:topology} we give a description of sequences of elements in $f\in\Bir_\kk(\p^2)$ converging to the identity. 

\medskip

Our point of view provides a transparent picture about how $\Bir_\kk(\p^2)$ acts on $\kk$-rational points on surfaces, which is displayed in the following application to birational geometry of surfaces.
Let $X$ and $Y$ be varieties over $\kk$, by which we mean integral and separated schemes of finite type. Let $f\colon X\dashrightarrow Y$ be a birational transformation and assume that the indeterminacy loci of $f$ and of $f^{-1}$ do not contain any $\kk$-rational point. Then $f$ induces a bijection between the sets of $\kk$-rational points $X(\kk)$ and $Y(\kk)$. We will call such a map a birational map that is \emph{bijective on $\kk$-rational points}. We denote by $\Bir(X)$ the group of birational transformations of $X$ and by $\BBir(X)$ the group of birational transformations that are bijective on $\kk$-rational points. The group $\BBir_\kk(\p^2)$ for finite fields $\kk$ has first been considered by Cantat in \cite{Cantat-Bir_permut}, where he showed that in odd characteristic and in the case $\kk=\F_2$, every permutation on the $\kk$-rational points $\p^2(\kk)$ is induced by an element from $\BBir_\kk(\p^2)$. We will prove the following result:

\begin{theorem}\label{thm:mainparity}
	Let $\kk$ be a finite field of even characteristic and assume that $\kk\neq\F_2$. If $S$ is a regular projective rational surface over $\kk$ and $f\in \BBir(S)$, then the induced permutation on the $\kk$-rational points $S(\kk)$ is even. 
\end{theorem}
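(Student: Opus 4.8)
The plan is to reduce the statement to a computation of parities on finitely many generators, using that the parity map is a homomorphism together with the generation theorem of Lamy and Schneider. Since $\car\kk=2$ and $|\kk|\geq 4$, the tree $\T_d$ with $d=q+1$ has odd degree $d\geq 5$, so the construction described above attaches to each $f\in\Bir(\p^2_\kk)$ a parity $\varepsilon(f)\in\Z/2\Z$; moreover $f\mapsto\varepsilon(f)$ is a group homomorphism $\Bir(\p^2_\kk)\to\Z/2\Z$ which, restricted to the subgroup $\BBir(S)$ of any regular projective rational surface $S$ (viewed inside $\Bir(\p^2_\kk)$ via a birational map $S\dashrightarrow\p^2$ and the induced identification of the trees of $\kk$-rational points), coincides with the signature of the permutation induced on $S(\kk)$. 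Hence the whole theorem is equivalent to the assertion that $\varepsilon$ is trivial, and since its target is abelian it suffices to prove $\varepsilon(g)=0$ for each $g$ in a generating set. By Theorem~\ref{thm:lamyschneider}, $\Bir(\p^2_\kk)$ is generated by $\PGL_3(\kk)$ together with a finite list of quadratic transformations, so only these have to be examined.

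The linear part costs nothing: the commutator subgroup of $\PGL_3(\kk)$ is the simple group $\PSL_3(\kk)$, and the abelianisation $\PGL_3(\kk)/\PSL_3(\kk)\cong\kk^\times/(\kk^\times)^3$ has odd order $\gcd(3,q-1)\in\{1,3\}$ because $q$ is even. Therefore $\mathrm{Hom}(\PGL_3(\kk),\Z/2\Z)=0$, and in particular $\varepsilon$ vanishes on all of $\PGL_3(\kk)$.

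It remains to check $\varepsilon(g)=0$ for each quadratic generator $g$. For this I would read off the almost automorphism $\widehat g\in\mathcal{N}_d$ from a resolution of $g$. Away from the finitely many $\kk$-points lying on a curve contracted by $g$ or by $g^{-1}$, the map $g$ is a local isomorphism, so $\widehat g$ sends the corresponding branches of $\T_d$ to branches by the induced local identifications, hence permutes this cofinite set of rational points; the sign of that partial permutation is a point count over $\p^2(\kk)$ (for instance, for a standard quadratic involution with three $\kk$-rational base points it is an involution of a set of size $(q-1)^2$ with a single fixed point, whence of sign $(-1)^{q(q-2)/2}=1$, using $q$ even). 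The remaining finitely many branches are redistributed inside the exceptional divisors produced by the resolution; writing down the tree pair representing $\widehat g$ converts this into a second, bounded, $q$-dependent signature, and $\varepsilon(g)$ is the product of the two. One then verifies, type by type, that the product equals $1$ whenever $q=2^n$ with $n\geq 2$. The same bookkeeping produces a nontrivial value for one generator when $q=2$, which is exactly why $\kk=\F_2$ is excluded, consistently with Cantat's surjectivity result there. The ``in particular'' is now immediate: for $f\in\BBir(S)$, $\varepsilon(f)$ is the signature of $f$ on $S(\kk)$ by the compatibility recalled above, and $\varepsilon(f)=0$ by what precedes.

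The main obstacle is this last step: pinning down Lamy--Schneider's explicit generators, translating each into the combinatorics of the almost automorphism it induces on $\T_d$ --- in particular the ``level shifts'' occurring above base points and along contracted curves --- and organising the resulting counts so that the parity is trivial precisely for $q\geq 4$. A more foundational point, settled earlier in the paper, is that $\varepsilon$ really is a homomorphism on $\Bir(\p^2_\kk)$: the ambient Neretin group $\mathcal{N}_d$ is simple and so carries no sign homomorphism, and the parity is well defined only because the almost automorphisms arising from birational maps have a controlled finite part.
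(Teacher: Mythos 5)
Your overall skeleton --- parity is a well-defined homomorphism $\varepsilon:\Bir(\p^2_\kk)\to\Z/2\Z$, so it suffices to kill it on a generating set --- is exactly the paper's strategy, and your $\PGL_3$ observation (odd abelianisation, hence no sign character) is correct though not needed in the paper. However, there are two genuine gaps. First, you misquote the generation theorem: Theorem~\ref{thm:lamyschneider} states that $\Bir(\p^2_\kk)$ over a finite field is generated by \emph{involutions}, not by $\PGL_3(\kk)$ together with finitely many quadratic transformations. The latter statement is not available over finite fields (Noether--Castelnuovo fails over non-closed fields; the needed generators include maps of higher degree and with non-rational base points), so your reduction to ``linear part plus quadratic generators'' does not get off the ground as written.

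Second, even granting some explicit generating set, the heart of your argument --- ``one then verifies, type by type, that the product equals $1$'' --- is precisely the hard computation, and it is left undone. The paper avoids this bookkeeping entirely: by Theorem~\ref{thm:finiteorder} (from \cite{asgarli2019biregular}), every involution of $\Bir(\p^2_{\F_q})$ is conjugate to an automorphism of a regular projective surface (a del Pezzo surface or a conic bundle) on which it induces an \emph{even} permutation of rational points; Lemma~\ref{lem:ParBBir} then identifies the parity of the involution with the signature of that permutation, so every involution is even, and Lemma~\ref{lem:ParComp} plus generation by involutions finishes the proof. If you want to salvage your route, you would have to either import Theorem~\ref{thm:finiteorder} as the paper does, or actually compute the tree-pair signatures for each generator in a correct generating set --- including the contribution from the exceptional divisors, which you only gesture at. Your closing remark that the computation ``produces a nontrivial value for one generator when $q=2$'' is also unsubstantiated; the actual reason $\F_2$ is excluded is that Lemma~\ref{lem:pgl2} fails there, so the parity is not even well defined.
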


The strategy of our proof consists in using the action of $\Bir_\kk(\p^2)$ by almost automorphisms on $\FF\p^2(\kk)$ to extend the notion of parity from $\BBir_\kk(\p^2)$ to all of $\Bir_\kk(\p^2)$. We can then deduce the theorem for $\p^2$ from a recent result by Lamy and Schneider (Theorem~\ref{thm:lamyschneider}) and finally generalize it to arbitrary $S$. 
\medskip

We would like to highlight the fact that over finite fields there exist non-trivial homomorphisms from $\Bir_\kk(\p^2)$ to $\Z/2\Z$ (\cite{lamy2020signature}), so  Theorem \ref{thm:mainparity} is not a plain consequence of  the abstract group structure of $\Bir_\kk(\p^2)$.  

\begin{remark}
	The fact that if $|\kk|=2^n\geq 4$, then the permutations on the $\kk$-rational points $\p^2(\kk)$ induced by elements from $\BBir_\kk(\p^2)$ are all even, was conjectured in the preprint \cite{Zimmermann-bir_permutations}.
	We learned that in parallel to our work, Asgarli, Lai, Nakahara, and Zimmermann came up independently of us with a proof of this conjecture, which they include in the published version of their manuscript \cite{Zimmermann-bir_permutations}. Their methods are different from ours: they study the group $\BBir_\kk(\p^2)$ in detail, provide an explicit generating set, and show by hand that every element in the generating set induces an even permutation. In our paper, we use a result (Theorem~\ref{thm:finiteorder}) from \cite{Zimmermann-bir_permutations}. However, our main strategy for the proof of Theorem~\ref{thm:mainparity} is different and our result is more general, since it shows that the permutations of $S(\kk)$ induced by $\BBir(S)$ are even for arbitrary regular rational projective surfaces $S$.
\end{remark}

In \cite{lonjou-urech}, the second and third author constructed for every surface $S$ over any field $\kk$, the \emph{blow-up complex} - a $\CAT(0)$ cube complex, on which $\Bir(S)$  acts by isometries - and used it to deduce various dynamical and group theoretical properties. However, one of the drawbacks of this construction is that  these cube complexes are never locally compact. If we work over a finite field, we can now use the locally compact cube complexes constructed for the Neretin groups to obtain a locally compact $\CAT(0)$ cube complex on which $\Bir(S)$ acts by isometries.
\medskip

{In \cite{lonjou-urech}, we asked the following question: Assume that in a finitely generated subgroup $\Gamma\subset\Bir(S)$  every element is conjugate to an automorphism of a projective surface, does this imply that $\Gamma$ itself is conjugate to a subgroup of automorphisms of a projective surface? This question seems to be subtle and difficult in general. However, in the case, where $\kk$ is a finite field, we obtain a positive answer, by applying Theorem~\ref{thm:IntroFixedPointThm} to the action of $\Bir(S)$ on the locally compact complex given by the Neretin groups.}

\begin{theorem}\label{thm:CremonaReg}
	Let $\kk$ be a finite field, $S$ a surface over $\kk$, and $\Gamma\subset\Bir(S)$ a finitely generated subgroup such that every element in $\Gamma$ is conjugate to an automorphism of a projective surface, then $\Gamma$ itself is conjugate to a subgroup of automorphisms of a projective surface.
\end{theorem}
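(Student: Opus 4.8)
The strategy is to reduce Theorem~\ref{thm:CremonaReg} to Theorem~\ref{thm:IntroFixedPointThm} via the action of $\Bir(S)$ on the locally compact CAT(0) cube complex provided by the Neretin embedding. First I would fix a regular projective model $X$ birational to $S$, so that $\Bir(S) \cong \Bir(X)$, and invoke Theorem~\ref{thm:MainNeretinCremona} to embed $\Bir(X)$ into $\mathcal{N}_d$ with $d = |\kk|+1$; composing with the action from Theorem~\ref{thm:IntroCubulation} gives an action of $\Gamma$ on a locally compact CAT(0) cube complex $Y$ whose vertex-stabilisers are automorphism groups of cofinite subforests. The point is that such a subforest automorphism group corresponds, on the Cremona side, to the group of birational transformations regular (and hence, by properness, an automorphism) on a suitable projective surface dominating finitely many $\kk$-points worth of blow-ups — so a global fixed point for the $\Gamma$-action on $Y$ is exactly what we want.

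**Key steps, in order.**
\emph{Step 1: Every element of $\Gamma$ acts elliptically on $Y$.} By hypothesis each $\gamma \in \Gamma$ is conjugate (in $\Bir(S)$) to an automorphism of a projective surface; I would argue that any such $\gamma$, viewed in $\mathcal{N}_d$, is an almost automorphism of finite order on some cofinite subforest, equivalently that it stabilises a cofinite subforest of $\T_d$ — this is the translation, in the tree language of Section on $\mathcal{T}\p^2(\kk)$, of "regularisable". Concretely: if $\gamma$ is conjugate to an automorphism $g$ of a projective surface $Z$, resolve $Z$ to a regular projective surface and track the finitely many $\kk$-rational base points; these determine a finite subtree that $\gamma$ permutes, so $\gamma$ fixes a vertex of $Y$. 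This uses that an automorphism of a projective surface over a finite field acts with bounded orbits on the blow-up structure over $\kk$-points.

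\emph{Step 2: Apply the fixed-point theorem.} The complex $Y$ is a CAT(0) cube complex with no infinite cube (local compactness from Theorem~\ref{thm:IntroCubulation} forces finite-dimensional cubes, indeed locally finitely many), and $\Gamma$ is finitely generated acting by elliptic isometries by Step~1; Theorem~\ref{thm:IntroFixedPointThm} then yields a global fixed point $v \in Y^{(0)}$.

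\emph{Step 3: Translate the fixed point back.} The stabiliser of $v$ is $\Aut(F)$ for a cofinite subforest $F$; on the Cremona side this stabiliser is conjugate (in $\Bir(S)$) to a group of automorphisms of a regular projective surface $S'$ obtained from the chosen model by blowing up the finite set of $\kk$-points of $\T_d$ not lying in $F$ (and their infinitely near neighbours recorded by $F$). Hence $\Gamma \subset \stab(v)$ is conjugate to a subgroup of $\Aut(S')$, which is the conclusion.

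**Main obstacle.**
The delicate point is Step~1 together with the precise dictionary in Step~3: one must verify that "conjugate to an automorphism of a projective surface" translates exactly to "elliptic on $Y$", and conversely that a vertex-stabiliser of $Y$ is genuinely (conjugate to) an automorphism group of a \emph{projective} surface rather than merely a quasi-projective or formal object. This requires care because the Neretin/forest picture only sees the combinatorics of $\kk$-rational points and their infinitely near points, so one needs to know that a birational transformation which is an "almost automorphism fixing a cofinite subforest" can be realised as a biregular automorphism of an honest projective surface — presumably by a compactness/finiteness argument showing the relevant blow-up is defined over $\kk$ and projective, and that regularity at all $\kk$-points plus the forest condition forces regularity everywhere. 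The finite generation of $\Gamma$ is essential and enters only through Theorem~\ref{thm:IntroFixedPointThm}, so no extra work is needed there.
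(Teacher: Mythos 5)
Your overall strategy --- cubulate via the Neretin embedding, show each element of $\Gamma$ is elliptic, apply the fixed-point theorem, and translate the fixed vertex back into algebraic geometry --- is indeed the paper's. But there is a genuine gap exactly at the point you flag as the ``main obstacle'', and the resolution you hope for is false. The stabiliser of a vertex, intersected with $\Bir(S)$, is (conjugate to) the group $\BBir(S')$ of birational self-maps of a regular projective surface $S'$ that are \emph{bijective on $\kk$-rational points} (Lemma~\ref{lem:CremonaElliptic}), not the automorphism group $\Aut(S')$. The tree $\T S(\kk)$ only records $\kk$-rational points, so base-points that are closed points of higher degree are invisible to it: regularity at all $\kk$-points together with the forest condition does \emph{not} force regularity everywhere. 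For instance, the quadratic transformations $\sigma_{a,b}$ from the proof of Proposition~\ref{prop:permutation}, based at a rational point and a degree-two point, lie in $\BBir(\p^2_{\kk})$ but are not automorphisms. As written, your argument proves only Proposition~\ref{prop:BBirreg}, the $\BBir$-version of the statement, which is strictly weaker than the theorem.

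The paper closes this gap with a base-change-and-descent step that is absent from your proposal. One first passes to a finite extension $L/\kk$ over which all base-points of the finitely many generators of $\Gamma$ --- and hence of all elements of $\Gamma$ --- are defined, and runs your Steps 1--3 for the action on the complex attached to $\T S_L(L)$. The fixed vertex then yields that $\Gamma$ is conjugate to a subgroup of $\BBir(S')$ for some regular projective surface $S'$ over $L$; since every base-point of every element of $\Gamma$ is $L$-rational and membership in $\BBir(S')$ excludes $L$-rational base-points, the elements have no base-points at all, so $\Gamma$ is conjugate to a subgroup of $\Aut(S')$ over $L$. Finally one descends from $L$ to $\kk$ using the regularisation result \cite[Theorem~1.3]{lonjou-urech}. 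Your Steps 1 and 2 are sound (ellipticity follows from Lemma~\ref{lem:CremonaElliptic} applied to each cyclic subgroup, and local finiteness does exclude infinite cubes), but without the field extension and the descent your Step 3 cannot reach the stated conclusion.
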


In a very similar spirit, we also prove that if $S$ is a regular projective surface over a finite field $\kk$ and if $\Gamma\subset\Bir(S)$ is a finitely generated subgroup such that for every element $\gamma\in\Gamma$ there exists a regular projective surface $S'$ over $\kk$ such that $\gamma$ is conjugate to an element in $\BBir(S')$, then there exists a regular projective surface $T$ over $\kk$ such that $\Gamma$ is conjugate to a subgroup of $\BBir(T)$ (see Proposition~\ref{prop:BBirreg}).

\subsection*{Outline of the article}
After recalling some preliminaries, we will construct in Section~\ref{sec:CremonaNeretin} the rational blow-up forest, which is the key construction in order to embed plane Cremona groups over a finite field into Neretin groups. The embedding and density of Cremona groups in Neretin groups (Theorem~\ref{thm:MainNeretinCremona}) is then proved in Section~\ref{Subsection_embedding} and the result about the parity of Cremona transformations (Theorem \ref{thm:mainparity}) in Section~\ref{Subsection_parity}. Section~\ref{Section_cubulation_Neretin} is devoted to the construction of the CAT(0) cube complex and its applications. The proof of the fixed point Theorem~\ref{thm:IntroFixedPointThm} finally is done in Section~\ref{Subsection_fixed_point_theorem}.

\subsection*{Acknowledgements}The authors would like to thank Yves Cornulier and Waltraud Lederle for helpful comments on the paper, as well as the anonymous referee for many suggestions that helped improve the exposition of the article. The second author is grateful to the CNRS for its support through the grant PEPS JC/JC.

\section{Preliminaries}
In this section we briefly recall some definitions and some results about Cremona groups and Neretin groups.
\subsection{Cremona groups}\label{sec:cremona}
In this section we recall some results from the birational geometry of surfaces, we refer to  \cite{Cantat_review} or \cite{lamycremona} and the references therein for details about Cremona groups, and to \cite{poonen2017rational} for details about rational points on varieties. In this article, a variety over a field $\kk$ is always an integral and separated schemes of finite type. When speaking about a morphism or a birational map between varieties over a field $\kk$, we always mean a $\kk$-morphism, or a $\kk$-birational map (unless stated otherwise). 
\medskip

Let $X$ be a variety over a field $k$. Recall that a \emph{$\kk$-rational point}, or just \emph{rational point}, is a morphism $\Spec k\to X$. The set of all rational points of $X$ is denoted by $X(\kk)$. A morphism between $k$-varieties $X\to Y$ induces a map $X(\kk)\to Y(\kk)$.

\medskip
Let $S $ be a regular surface over $\kk$ and let $p$ be a closed point on $S$. The \emph{blow-up} of the surface $S$ in $p$ is a variety $Bl_p$ together with a morphism $\pi\colon Bl_p\to S $ such that the inverse image of $p$ is a Cartier divisor, which is called \emph{the exceptional divisor of $\pi$}, and such that the following universal property is satisfied: For a morphism $\pi'\colon S '\to S $ such that the inverse image of $p$ is a Cartier divisor, there always exists a unique morphism $f\colon S '\to Bl_p$ that satisfies $\pi'=\pi f$. Blow-ups always exist and moreover, if $S $ is a regular projective surface, then $Bl_p$ is a regular projective surface. The exceptional divisor is isomorphic to $\p^1_L$, where $L=k(p)$ is the function field of $p$. In particular, if $p$ is a rational point, then the exceptional divisor is isomorphic to $\p^1_\kk$. Let us also note that the exceptional divisor contains rational points if and only if $p$ is rational.

\medskip
Let $S$ be a regular projective surface over $\kk$. The {\it bubble space} $\B S$ of the surface $S$ is, roughly speaking, the set of all closed points that belong to $S$ or are infinitely near to $S$, that is, contained on a surface dominating $S$. More precisely, $\B S$ is the set of all triples $(y,T,\pi)$, where $T$ is a regular projective surface over $\kk$, $y$ is a closed point in $T$ and $\pi\colon T\to S$ is a birational morphism, modulo the following equivalence relation: A triple $(y, T, \pi)$ is equivalent to $(y', T', \pi')$ if the birational map $\pi'^{-1}\pi\colon T\dashrightarrow T'$ induces an isomorphism in a neighbourhood  of $y$ and maps $y$ to $y'$. A point $p\in\B S$ that is equivalent to some $(x,S,\id)$ is called a {\it proper point} of $S$. All points in $\B S$ that are not proper are called {\it infinitely near}. If there is no ambiguity, we will sometimes denote a point in the bubble space by $y$ instead of $(y, T, \pi)$.
A point $p\in\B S$ is a \emph{rational} point, if $p$ can be represented by a triple $(y, T, \varphi)$ such that $y$ is a rational point on $T$. We denote the set of rational points in $\B S$ by $\B S(\kk)$.

\medskip
Any birational transformation between projective surfaces can be factored into blow-ups of closed points:

\begin{theorem}[{\cite[\href{https://stacks.math.columbia.edu/tag/0C5Q}{Tag 0C5Q, Tag 0C5J}]{Stack_project}}]\label{factorization}
	Let $S $ and $S '$ be regular projective surfaces over a field $\kk$ and let $f\colon S \dashrightarrow S '$ be a birational transformation. Then there exists a projective surface $\tilde{S}$ over $\kk$ together with two morphisms $\eta\colon \tilde{S} \to S $, $\rho\colon \tilde{S} \to S '$ satisfying $f=\rho\eta^{-1}$, such that we can factorize $\eta\colon \tilde{S} \to S ^n\to\cdots\to S ^1\to S ^0=S $ and $\rho\colon \tilde{S} \to {S'} ^m\to\cdots\to {S'}^1\to {S'}^0=S' $, where each of the arrows is a blow-up in a closed point.
	
	Moreover, $\tilde{S}$ can be chosen minimal in the following sense: for any other projective regular surface $\tilde{S}'$ giving rise to such a factorization, there exists a surjective morphism $\pi\colon  \tilde{S}' \rightarrow  \tilde{S}$. This implies in particular that $ \tilde{S} $ and $ \tilde{S}' $ are respectively obtained from $S $ by blowing up a unique sequence of points in the bubble space $\B S $.
\end{theorem}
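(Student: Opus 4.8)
The plan is to derive the statement from two classical inputs of the birational geometry of surfaces: resolution of indeterminacies, and the fact that a proper birational morphism between regular projective surfaces is a finite composition of blow-ups at closed points (which is precisely the content of \cite[Tag~0C5Q]{Stack_project}).

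\emph{Existence of a common resolution.} I would first resolve the indeterminacy of $f$. Let $\Gamma\subseteq S\times_{\kk}S'$ be the closure of the graph of $f$; the two projections make $\Gamma$ a projective surface birational to both $S$ and $S'$, although $\Gamma$ may be singular. Since resolution of singularities holds for two-dimensional excellent schemes in every characteristic, there is a regular projective surface $\tilde{S}$ with a proper birational morphism $\tilde{S}\to\Gamma$, hence proper birational morphisms $\eta\colon\tilde{S}\to S$ and $\rho\colon\tilde{S}\to S'$ with $f=\rho\eta^{-1}$. Applying the structure theorem for proper birational morphisms of regular surfaces to $\eta$ and to $\rho$ then produces the two desired factorisations into blow-ups. (The mechanism behind that structure theorem: if $\pi\colon X\to Y$ is such a morphism and is not an isomorphism, then by Zariski's main theorem it contracts an irreducible curve; one shows $\pi$ factors through $Bl_yY$ for a suitable closed point $y$ of $Y$ lying under the exceptional locus, via the universal property of the blow-up, and since the induced morphism $X\to Bl_yY$ requires one blow-up fewer, induction finishes the argument.)

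\emph{Minimality.} Call a factorisation $(\tilde{S},\eta,\rho)$ of $f$ as above \emph{reduced} if no irreducible curve of $\tilde{S}$ is contracted by both $\eta$ and $\rho$. Any factorisation can be made reduced: if $E\subseteq\tilde{S}$ is contracted by both, then, being contracted by the birational morphism $\eta$, it has negative self-intersection, and being contractible (it is contracted by $\rho$ to a regular point) it is a $(-1)$-curve of the shape $\p^1_L$ for a finite extension $L/\kk$, by Castelnuovo's criterion in the form valid over an arbitrary field; blowing it down and factoring $\eta$ and $\rho$ through the blow-down yields a factorisation of strictly smaller Picard rank, so after finitely many steps one reaches a reduced factorisation. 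Now let $(\tilde{S},\eta,\rho)$ be reduced and $(\tilde{S}',\eta',\rho')$ any other factorisation of $f$; I claim $\tilde{S}'$ dominates $\tilde{S}$. Put $\psi=\eta^{-1}\eta'\colon\tilde{S}'\dashrightarrow\tilde{S}$, so that $\eta'=\eta\psi$ and $\rho'=\rho\psi$, and pick a minimal common resolution $a\colon Z\to\tilde{S}'$ and $b\colon Z\to\tilde{S}$. If $\psi$ were not a morphism, then $a$ is not an isomorphism, so the exceptional curve $E$ of the last blow-up composing $a$ is a $(-1)$-curve of $Z$ contracted by $a$; by minimality of $Z$ it is not contracted by $b$, so $b(E)=\bar{E}$ is a curve on $\tilde{S}$. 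From $\eta b=\eta'a$, $\rho b=\rho'a$ and the fact that $a$ contracts $E$, both $\eta$ and $\rho$ contract $\bar{E}$; as $E$ is the strict transform of $\bar{E}$ under $b$, a self-intersection count forces $b$ to be an isomorphism near $\bar{E}$, so that $\bar{E}\cong E$ is a $(-1)$-curve contracted by both $\eta$ and $\rho$ — contradicting reducedness. Hence $\psi$ is a morphism $\tilde{S}'\to\tilde{S}$, necessarily surjective since $\tilde{S}$ is normal and $\psi$ is dominant.

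\emph{Bubble space formulation, and the main obstacle.} Through $\eta$, the reduced $\tilde{S}$ is obtained from $S$ by blowing up a finite subset of $\B S$ which is closed under the infinitely-near partial order, and reducedness says exactly that this subset equals the set of base points of $f$; this set depends only on $f$, so the sequence of blown-up points is unique in the evident sense, and the structure theorem together with the morphism $\tilde{S}'\to\tilde{S}$ just constructed displays any other factorisation $\tilde{S}'$ as obtained from $\tilde{S}$, hence from $S$, by blowing up a longer such sequence. The technical heart of everything is the structure theorem for proper birational morphisms of regular surfaces invoked above, together with making Castelnuovo's contractibility criterion and the intersection theory of surfaces work over an arbitrary, possibly imperfect base field — the reason one insists on regular surfaces and exceptional curves of the form $\p^1_L$. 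All of this is standard; as the statement is quoted verbatim from \cite[Tag~0C5Q, Tag~0C5J]{Stack_project}, in the paper one simply cites it, and the above only indicates the route a self-contained proof would take.
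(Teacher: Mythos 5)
The paper offers no proof of this statement: it is imported verbatim from the Stacks Project (Tags 0C5Q and 0C5J), so there is no internal argument to compare yours against. Your sketch of the standard route — common resolution via the graph closure (or, as in the cited source, by blowing up indeterminacy points directly, which avoids invoking Lipman's resolution of excellent surface singularities), the factorisation theorem for proper birational morphisms of regular surfaces, and a $(-1)$-curve argument on a common resolution of $\eta^{-1}\eta'$ for minimality — is the right skeleton, and the second half of your minimality argument (the analysis of the last exceptional curve of $Z\to\tilde{S}'$) is correct as written.

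There is, however, one false step in the reduction to a ``reduced'' factorisation. You claim that an irreducible curve $E\subset\tilde{S}$ contracted by both $\eta$ and $\rho$ must be a $(-1)$-curve because it is ``contracted to a regular point.'' This is not what Castelnuovo's criterion gives: $E$ is merely a component of a one-dimensional fibre, and such a component can have arbitrarily negative self-intersection. Concretely, if $\tilde{S}\to\tilde{S}_0$ is the composition of two blow-ups, first of a point $q$ and then of a point on its exceptional curve $E_q$, and both $\eta$ and $\rho$ factor through $\tilde{S}\to\tilde{S}_0$, then the strict transform of $E_q$ is contracted by both and has self-intersection $-2$, so your contraction step cannot be applied to it. The standard repair is to dispense with the notion of ``reduced'' altogether: take $(\tilde{S},\eta,\rho)$ minimising the total number of blow-ups, and run your common-resolution argument; it produces (exactly as you wrote) a $(-1)$-curve $\bar{E}$ of $\tilde{S}$ contracted by both $\eta$ and $\rho$, whereupon both morphisms factor through the Castelnuovo contraction of $\bar{E}$, contradicting minimality of the count rather than ``reducedness.'' With that substitution your argument closes up; the identification of the blown-up locus with $\Base(f)\subset\B S$ in your last paragraph is then the content of Tag 0C5J applied to $\eta$.
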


As a particular instance of Theorem~\ref{factorization} we have that if $\pi\colon S'\to S$ is a morphism between regular projective surfaces, then $\pi$ is a blow-up of some points in $\B S$. In this case, we say that $S'$ lies \emph{above} $S$. 

\medskip
Theorem~\ref{factorization} allows us to define the notion of base-points:

\begin{definition}
	Let $f\colon S \dashrightarrow S' $ be a birational transformation between two regular projective surfaces. The \emph{base-points} of $f$ are the points in $\B S$ that are blown up by $\eta$  in the minimal resolution of $f$. The set of base-points of $f$ will be denoted by $\B(f)$.
\end{definition}

A birational morphism $\pi\colon S\to T$ of regular projective surfaces $S$ and $T$ induces a bijection $\B S\to\B T\setminus\B(\pi^{-1})$ by mapping a point represented by $(x,{S'},\varphi)$ to the point represented by $(x,{S'}, \pi\circ\varphi)$. {By abuse of notation we will denote this map by $\pi$ as well.} Note that $\pi$ maps  rational points to  rational points. {By Theorem~\ref{factorization}}, a birational transformation of regular projective surfaces $f\colon S\dashrightarrow T$ induces a bijection  $f\colon\B S\setminus\B(f)\to\B T\setminus \B(f^{-1})$ by   {$f\coloneqq \rho\circ\eta^{-1}$, where $\eta$ and $\rho$ are given by Theorem \ref{factorization} for a minimal resolution}. In particular, $f$ maps  rational points that are not base-points of $f$ bijectively to  rational points that are not base-points of $f^{-1}$.

\subsection{Neretin groups}\label{sec:neretin}

In this section we recall the construction of the Neretin groups as almost automorphism groups of rooted trees, as well as their topologies. For details we refer to \cite{garncarek2015neretin, neretin1992combinatorial, MR1703086, le2018commensurated}.

\paragraph{Almost isomorphisms of graphs.}Given two (not necessarily connected) graphs $X$ and $Y$, an \emph{almost isomorphism} $(f,R,S)$ is the data of two finite subgraphs $R \subset X$, $S \subset Y$ and a graph isomorphism $f : X \backslash R \to Y \backslash S$. In order to shorten the notation, one might write $f : X \dashrightarrow Y$ without referring to $R,S$. Two almost isomorphisms are considered as equal if they agree on some cofinite subset of vertices. We denote by $\mathrm{AAut}(X)$ the almost automorphism group of the graph $X$. Observe that modifying $X$ in a finite subset does not modify its almost automorphism group. More precisely, if $f : X \dashrightarrow Y$ is an almost isomorphism, then the map $g \mapsto fgf^{-1}$ induces an isomorphism $\mathrm{AAut}(X) \to \mathrm{AAut}(Y)$. 

\begin{definition}
Given two integers $d \geq 2$ and $r \geq 1$, the \emph{Neretin group} $\mathcal{N}_{d,r}$ is the almost automorphism group $\mathrm{AAut}(\mathcal{T}_{d,r})$ of the rooted tree $\mathcal{T}_{d,r}$ whose root has degree $r$ and all of whose other vertices have degree $d+1$. 
\end{definition}

\noindent
Observe that, as a consequence of the previous remark, $\mathcal{N}_{d,r}$ also coincides with the almost automorphism group of the disjoint union of $r$ rooted $d$-regular trees. For simplicity, we note $\mathcal{T}_{d}:= \mathcal{T}_{d,1}$ and $\mathcal{N}_d:= \mathcal{N}_{d,1}$.

\paragraph{Almost automorphisms of rooted forests.} In this article, we are mainly concerned with almost automorphisms of rooted trees or forests, which can be described in a more convenient way for our purpose. 

\medskip \noindent
So let $\mathcal{F}$ be a rooted forest. {We denote by $\lambda(\mathcal{F})$ the set of roots of the forest $\mathcal{F}$.} A rooted subforest $F \subset \mathcal{F}$ is \emph{admissible} if its roots are contained in the roots of $ \mathcal{F}$ and if it is finite (possibly empty). {The cofinite subforest $F^C:=\mathcal{F}\setminus F$ of $\mathcal{F}$ is obtained from $\mathcal{F}$ by removing the vertices that are in $F$ and the edges that have a least an endpoint in $F$. }  {Notice that the cofinite subforest $F^C$ is rooted in a natural way, where the roots are given by roots of $\mathcal{F}$ and leaves of $F$.}

\medskip \noindent
Then an almost automorphism of the forest $\mathcal{F}$ can be represented as an equivalence class of triples $(\psi, F, F')$, where $F$ and $F'$ are admissible rooted subforests of $\mathcal{F}$, and where $\psi\colon F^C \to F'^C$ is a rooted forest isomorphism, i.e.\ a forest isomorphism sending bijectively $\lambda( F^C)$ to $\lambda(F'^C)$. Two such triples $(\psi_1, F_1,F_1')$ and $(\psi_2, F_2, F_2')$ are equivalent if there exist admissible rooted subforests $F \supset F_1,F_2$ and $F' \supset F_1',F_2'$ such that $\psi_1$ and $\psi_2$ induce the same rooted forest isomorphism $F^C\to F'^C$.

\medskip Following \cite{le2018commensurated}, we call an almost automorphism $g$ of a forest $\mathcal{F}$ {\it elliptic}, if there exists an admissible subforest $F$ such that $g$ can be represented by a triple $(\psi, F, F)$.

\paragraph{Topologies on the Neretin groups}
Let $\mathcal{T}$ be a rooted tree. A \emph{ray} departing from the root $v_0$ of $\mathcal{T}$ is a sequence of distinct vertices $(v_0,\ v_1,\dots)$ such that $v_i$ and $v_{i+1}$ are connected by an edge for all $i\geq 0$. The \emph{visual boundary} $\partial \mathcal{T}$ is the space of all rays departing from $v_0$ equipped with the \emph{visual metric}, i.e.\ for $\eta_1, \eta_2\in\partial \mathcal{T}$ we define $d(\eta_1, \eta_2)=e^{-\delta(\eta_1, \eta_2)}$, where $\delta(\eta_1, \eta_2)$ denotes the length of the common initial path of the rays $\eta_1$ and $\eta_2$. The visual metric defines an ultrametric on $\partial \mathcal{T}$, which turns it into a compact and second countable space that is totally disconnected. Let us note that a closed ball in $\partial \mathcal{T}$ corresponds exactly to the rays passing through a given vertex in $\mathcal{T}$.  
\medskip

The homeomorphism group $\Homeo(\partial \T_{d,r})$ can be equipped with the compact-open topology, i.e.\ the topology induced by the following metric: for all $f, g\in\Homeo(\partial \T_{d,r})$, we define \[d(f,g):=\max_{\eta\in\partial\T_{d,r}}d(f(\eta), g(\eta)).\] We now equip $\Aut(\T_{d,r})$ with the restriction of this topology. Similarly, we could consider $\AAut(\T_{d,r})$ with the induced compact-open topology. However, with this choice, the group $\AAut(\T_{d,r})$ would not be locally compact, since $\AAut(\T_{d,r})\subset \Homeo(\partial \T_{d,r})$ is not closed. For this reason, $\AAut(\T_{d,r})$ is usually equipped with the unique topology that turns it into a topological group such that the injection $\Aut(\T_{d,r})\to \AAut(\T_{d,r})$ is continuous and open. With respect to this topology, 
$\AAut(\T_{d,r})$ is a totally disconnected locally compact group (see \cite{garncarek2015neretin} for details and proofs).

\begin{remark}
	Let $S_1, S_2\subset \T_{d}$ be two admissible subtrees. Then the set of all elements $g\in \AAut(\T_{d})$ that have a representative of the form $(\varphi, S_1, S_2)$ is open. Moreover, the open sets of this type form a basis for the Neretin group.
\end{remark}

\medskip \noindent
For a more topological interpretation of almost automorphisms of $\T_{d,r}$, let us observe that an almost automorphism induces a homeomorphism of the visual boundary $\partial \T_{d,r}$. The subgroup $\Aut(\T_{d,r})\subset \AAut(\T_{d,r})$ of automorphisms of the rooted tree $\T_{d,r}$ corresponds exactly to the isometries of $\partial\T_{d,r}$. Almost automorphisms correspond exactly to the homeomorphisms $\varphi$ of $\partial \T_{d,r}$ such that there exists a partition of $\partial\T_{d,r}$ into disjoint closed balls $B_1,\dots, B_n$ and the restriction of $\varphi$ to $B_i$ is a homothety for all $1 \leq i \leq n$. This justifies the common term \emph{spheromorphism} for elements in the Neretin group.

\section{Cremona groups in Neretin groups}\label{sec:CremonaNeretin}

\subsection{The blow-up forest}\label{Section_blow_up_tree}
Let $\kk$ be an arbitrary field.
We can equip the bubble space of a regular projective surface $S$ with the structure of a forest, the \emph{blow-up forest} $\mathcal{F} S$. The vertices of $\mathcal{F} S$ are the points in the bubble space $\B S$ and two distinct vertices $p$ and $q$ are connected by an edge if we can write $p=(x, T, \varphi)$ and $q=(y, T', \psi)$ such that $\varphi^{-1}\psi\colon T'\to T$ is the blow-up in the point $x$ and $y$ is contained in the exceptional divisor of this blow-up. 
We define the \emph{rational blow-up forest} $\mathcal{F}S(\kk)$ as the subforest induced by the vertices belonging to $\B S(\kk)$. To every blow-up $\pi\colon S'\to S$ we can associate the rooted subforests $\FF(\pi)\subset\mathcal{F}S$ and $\FF(\pi)(\kk)\subset\mathcal{F}S(\kk)$ induced by the (rational) base-points of $\pi{^{-1}}$. Note that the closed points of $S'$ are {the roots of $\FF(\pi)^C$ and the rational points of $S'$ are the roots of $\FF(\pi)(\kk)^C$.}
\medskip

Conversely, to an admissible rooted subforest $F\subset\mathcal{F}S$, we can associate a blow-up $\pi_F\colon {S_F}\to S$, where $\pi_F$ blows up the points in the bubble space corresponding to the vertices of $F$.

\medskip

Let $f\in\Bir(S)$ be a birational transformation. We observe that the induced bijection $f\colon \B S\setminus\B(f)\to \B S\setminus \B(f^{-1})$ constructed in Section~\ref{sec:cremona} preserves the forest structure and hence induces an almost automorphism of the blow-up forest $\FF S$, which we denote by $\tilde{f}$. Since $f$ preserves rational points, it also induces an almost automorphism of the rational blow-up forest $\FF S(\kk)$, which will be denoted by $\tilde{f}$ as well.

This proves the following proposition:

\begin{proposition}\label{prop:neretin}
	Let $\kk$ be a field and $S$ a regular projective surface over $\kk$. Then $\Bir(S)$ acts by almost automorphisms on the blow-up forest $\FF S$, i.e.\ there exists a morphism from $\Bir(S)$ to $\AAut(\FF S)$, as well as on the rational blow-up forest $\FF S(\kk)$. 
\end{proposition}

In what follows we will give a  description of $\tilde{f}$ and introduce some vocabulary needed in the sequel. We focus on the rational blow-up forest, but everything can be defined analogously for the blow-up forest.
To a birational transformation $f\in\Bir(S)$ we associate the {(rational)}\emph{base-point forest} $\FF f$ {respectively $\FF f(\kk)$}, which we define to be the admissible rooted subforest of $\FF S$ {(respectively $\FF S(\kk)$ )} induced by the {(rational)} base-points $\B(f)$ of $f$. 

\medskip

For an $f\in\Bir(S)$, consider the rational base-point forests $F:=\FF f(\kk)$ and $F':=\FF f^{-1}(\kk)$ and let $\pi_F\colon S_F\to S$ and $\pi_{F'}\colon S_{F'}\to S$ be blow-ups of the corresponding points in the bubble space $\B S(\kk)$. Then 
\[
\pi_{F'}^{-1}f\pi_F\colon S_{F}\dashrightarrow S_{F'}\]
is bijective on $\kk$-points, so it induces a rooted forest isomorphism $\mathcal{F} S_{F}(\kk)\to \mathcal{F} S_{F'}(\kk)$. If we identify $\mathcal{F} S_F(\kk)$ with $F^C$ through $\pi_F$ and $\mathcal{F} S_{F'}(\kk)$ with $F'^C$  through $\pi_{F'}$ respectively, this yields a rooted forest isomorphism 
\[
\psi_{f}\colon  F^C\to F'^C,\]
and we can write $\tilde{f}=\left(\psi_{f}, F, F'\right)$. Note that $\psi_{f}$ does not depend on the choice of the blow-ups $\pi_F$ and $\pi_{F'}$.

\begin{remark}
	Although we can represent $f\in \Bir(S)$ by the triple $(\psi_f, \FF f(\kk), \FF f^{-1}(\kk))$, the admissible forests $\FF f^{-1}(\kk)$ and  $\FF f(\kk)$ are not necessarily the minimal forests that appear in a representative of $\tilde{f}$. For instance, assume that $\kk$ is finite. Let $\pi\colon S'\to \p^2$ be a blow-up of sufficiently many rational points and let $f\in\Bir_\kk(\p^2)$ be an element that is conjugate to an automorphism of $S'$ of infinite order. Up to iterating $f$, we may assume that this automorphism fixes all the rational points $S'(\kk)$, and as a consequence the almost automorphism $\tilde{f}$ is in fact an automorphism of $\FF \p^2$. However, having infinite order, $f$ is not conjugate to an automorphism of $\p^2$, in particular, $\FF f(\kk)$ and $\FF f^{-1}(\kk)$ are non-empty.
\end{remark}

\begin{lemma}\label{lem:nobasepoints}
Let $S$ be a regular projective surface over $\kk$. If $f\in\Bir(S)$ induces an almost automorphism $\tilde{f}$ on $\FF S(\kk)$ that has a representative of the form $(\varphi, F,F')$ such that {the vertices in the outer boundary} of $F$ do not contain any base-point of $f$ and {the vertices in the outer boundary} of $F'$ do not contain any base-points of $f^{-1}$, then $f$ induces a birational map $S_F\dashrightarrow S_{F'}$ that is bijective on $\kk$-points.
\end{lemma}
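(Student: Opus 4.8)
The strategy is to reduce to the case $T=\T f(\kk)$, $T'=\T f^{-1}(\kk)$, which is exactly the situation described just before the statement (where $\pi_{\T f^{-1}(\kk)}^{-1}f\,\pi_{\T f(\kk)}$ is bijective on $\kk$-points), by showing that $T$ and $T'$ are obtained from $\T f(\kk)$ and $\T f^{-1}(\kk)$ by blowing up further rational points, and that these further blow-ups are matched up by $f$.

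First I would extract from the hypothesis the fact that $\T f(\kk)\subseteq T$ and $\T f^{-1}(\kk)\subseteq T'$. Recall that base-points form a subset of $\B S$ that is closed under taking predecessors (one cannot blow up a point of the bubble space before blowing up its predecessors). Hence if a rational base-point $p$ of $f$ were not an interior vertex of $T$, then the vertex of $\T S(\kk)$ at which the segment from the root to $p$ leaves $T$ — which is $p$ itself or a predecessor of $p$, and in either case a base-point of $f$ — would be a leaf of $T$ lying in $\Base(f)$, contradicting the hypothesis. So $p$, together with its children, lies in $T$, whence $\T f(\kk)\subseteq T$; symmetrically $\T f^{-1}(\kk)\subseteq T'$.

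Since $(\varphi,T,T')$ and $(\psi_f,\T f(\kk),\T f^{-1}(\kk))$ represent the same almost automorphism and the first pair of subtrees contains the second, $\varphi$ agrees with $\psi_f$ on a cofinite subforest of $\T S(\kk)\bbslash T$; as a forest isomorphism is determined by its restriction to any cofinite subforest, $\varphi$ is exactly the restriction of $\psi_f$ to $\T S(\kk)\bbslash T$. Recall also that $\psi_f$ is the restriction of the bijection $f\colon\B S\setminus\Base(f)\to\B S\setminus\Base(f^{-1})$, which preserves the tree structure of the bubble spaces, being a composition of blow-up morphisms. Now factor $\pi_T=\pi_{\T f(\kk)}\circ\sigma$ and $\pi_{T'}=\pi_{\T f^{-1}(\kk)}\circ\sigma'$, where $\sigma\colon S_T\to S_{\T f(\kk)}$ is the blow-up of the set $E$ of interior vertices of $T$ that are not interior vertices of $\T f(\kk)$, regarded through $\pi_{\T f(\kk)}$ as a finite set of rational points of $\B S_{\T f(\kk)}$ closed under predecessors, and likewise $\sigma'$ blows up such a set $E'$ for $T'$. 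The crux of the proof is that $f$ restricts to a bijection $E\to E'$: a vertex $v\in E$ lies in $\T S(\kk)\bbslash\T f(\kk)$ and is not a base-point of $f$, and being an interior vertex of $T$ it is the meet in $\T S(\kk)$ of two distinct leaves $\ell_1,\ell_2$ of $T$ lying below it; since $\psi_f$ preserves meets and maps $\partial T$ bijectively onto $\partial T'$, the point $\psi_f(v)=\psi_f(\ell_1)\wedge\psi_f(\ell_2)$ is the meet of two distinct leaves of $T'$, hence an interior vertex of $T'$, and it is not a base-point of $f^{-1}$, so $\psi_f(v)\in E'$; running the same argument for $f^{-1}$ shows $E\to E'$ is a bijection.

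To conclude I would invoke the following elementary observation, applied to $h:=\pi_{\T f^{-1}(\kk)}^{-1}f\,\pi_{\T f(\kk)}\colon S_{\T f(\kk)}\dashrightarrow S_{\T f^{-1}(\kk)}$ (bijective on $\kk$-points by construction) with $P=E$ and $h(P)=E'$: \emph{if $h\colon X\dashrightarrow Y$ is a birational map of regular projective surfaces that is bijective on $\kk$-rational points, and $\sigma\colon X'\to X$, $\sigma'\colon Y'\to Y$ are the blow-ups of finite sets of rational points $P\subseteq\B X$ and $h(P)\subseteq\B Y$ that are closed under predecessors, then $\sigma'^{-1}h\,\sigma\colon X'\dashrightarrow Y'$ is again bijective on $\kk$-rational points.} Indeed, take the minimal resolution $X\leftarrow W\to Y$ of $h$, in which both morphisms blow up only non-rational points; since $P$ is disjoint from $\Base(h)$ it survives in $\B W$, and its image in $\B Y$ is $h(P)$, so blowing up $P$ on $W$ yields a surface $W'$ dominating both $X'$ and $Y'$ through morphisms that again blow up only non-rational points, whence neither $\sigma'^{-1}h\,\sigma$ nor its inverse has a rational base-point. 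Applying this to the data above gives that $\pi_{T'}^{-1}f\,\pi_T=\sigma'^{-1}h\,\sigma$ is bijective on $\kk$-rational points, as claimed. I expect the step requiring the most care to be the identification that $f$ carries $E$ onto $E'$ — the meet-preservation argument, together with keeping the various bubble spaces and their forest identifications straight.
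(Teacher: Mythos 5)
Your proof is correct. It follows the same overall strategy as the paper's argument --- first establish $\T f(\kk)\subseteq T$ and $\T f^{-1}(\kk)\subseteq T'$, then account for the extra blown-up points, which are matched up by $\psi_f$ --- but the execution differs in two places, both of which supply detail the paper leaves implicit. First, you justify the containments and the bijection $E\to E'$ explicitly (via closure of $\Base(f)$ under predecessors, and the meet-of-two-leaves argument showing $\psi_f(E)=E'$), whereas the paper simply asserts that $\T f(\kk)\subseteq T$ and that the extra vertex of $T'$ is the one ``corresponding to $\psi_f(p)$''. Second, and more substantively, the paper finishes by induction on the number of extra interior vertices, blowing up one matched pair $(p,\psi_f(p))$ at a time and observing that the induced map carries the exceptional divisor over $p$ isomorphically onto the one over $\psi_f(p)$, because it is a local isomorphism at $p$; you instead perform all the extra blow-ups at once and verify bijectivity on $\kk$-points by exhibiting a common resolution $W'$ of $\pi_{T'}^{-1}f\pi_T$ whose two projections onto $S_T$ and $S_{T'}$ blow up only non-rational points (using that no rational point of the bubble space lies over a non-rational one). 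Both mechanisms are sound; yours trades the paper's very short local-isomorphism observation for a slightly longer but reusable resolution lemma, at the cost of having to check the predecessor-closure conditions on $E$, $E'$ and $\Base(h)$, which you do.
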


\begin{proof}
	The rational base-point forests $\FF f(\kk)$ and $\FF f^{-1}(\kk)$ have to be contained in $F$ and $F'$, respectively. If $F$ contains one more vertex than $\FF f(\kk)$, then we obtain $S_F$ by blowing up a point $p$ in $S_{\FF f(\kk)}$. Correspondingly, $S_{F'}$ is obtained by blowing up the point $\psi_f(p)$ in $S_{\FF f^{-1}(\kk)}$. Therefore, the induced birational map $S_F\dashrightarrow S_{F'}$ maps the exceptional divisor of the blow-up of $p$ isomorphically to the exceptional divisor of the blow-up of $\psi_f(p)$. It is therefore bijective on $\kk$-points. We proceed inductively to obtain the general case. 
\end{proof}

In a next step, let us observe that over finite fields birational transformations that are conjugate to an element in $\BBir(S')$ for some regular projective $S'$, correspond exactly to the birational transformations inducing elliptic almost automorphisms:

\begin{lemma}\label{lem:CremonaElliptic}
Let $\kk$ be finite and $S$ a regular projective surface over $\kk$. Let $\Gamma\subset \Bir(S)$ be a finitely generated subgroup. Then there exists an {admissible} rooted subforest $F\subset\FF S(\kk)$ such that every element in $\Gamma$ can be represented by a triple of the form $(\psi, F, F)$ if and only if $\Gamma$ is conjugate to a subgroup of $\BBir(S')$ for some regular projective surface $S'$ {over $\kk$}.
\end{lemma}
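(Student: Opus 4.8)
The plan is to prove both implications by translating between the cube-free combinatorial data (an admissible tree $T$ fixed by all of $\Gamma$ up to the equivalence of triples) and the birational data (a regular projective surface $S'$ above $S$ whose rational blow-up tree realizes $T$). For the direction ($\Leftarrow$), suppose $\Gamma$ is conjugate via some $g \in \Bir(S)$ to a subgroup $g\Gamma g^{-1} \subset \BBir(S')$ for a regular projective surface $S'$. First I would arrange that $S'$ lies above $S$: by Theorem~\ref{factorization} we may replace $S'$ by a common resolution of $g$ so that there are birational morphisms $S' \to S$, hence $S'$ is obtained from $S$ by blowing up a finite sequence of points of $\B S(\kk)$ — let $T \subset \T S(\kk)$ be the admissible subtree $T_{S',\pi}$ spanned by these points, the root, and their children, so that $S' = S_T$ and $S'(\kk)$ is identified with $\partial T$. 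Conjugating the whole picture back, each $\gamma \in \Gamma$ corresponds to $g^{-1}(g\gamma g^{-1})g$, and since $g\gamma g^{-1} \in \BBir(S')$ is bijective on $\kk$-rational points of $S' = S_T$, it induces a forest isomorphism $\T S(\kk) \bbslash T \to \T S(\kk) \bbslash T$; one has to check this forest isomorphism agrees with the almost automorphism $\tilde\gamma$ already attached to $\gamma$, which follows because both are computed from the action of $g\gamma g^{-1}$ on $\B S'(\kk) \cong \T S(\kk)\bbslash T$ and the identification is the fixed one coming from $\pi_T$ and the conjugating element $g$. Hence every $\gamma$ has a representative $(\psi_\gamma, T, T)$.

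For the direction ($\Rightarrow$), suppose there is a single admissible $T \subset \T S(\kk)$ with every $\gamma \in \Gamma$ representable as $(\psi_\gamma, T, T)$. Let $\pi_T \colon S_T \to S$ be the corresponding blow-up, so $S_T$ is a regular projective surface lying above $S$ and $S_T(\kk) \leftrightarrow \partial T$. The key point is Lemma~\ref{lem:nobasepoints}: the leaves of $T$ contain no base-point of $\gamma$ nor of $\gamma^{-1}$ — because $\T\gamma(\kk) \subseteq T$ and the leaves of $T$ are leaves of $\T S(\kk)$ in the sense that blowing up an interior vertex of $T$ exhausts all base-points, so a base-point sitting at a leaf of $T$ would contradict $(\psi_\gamma,T,T)$ being a valid representative (the forest isomorphism is defined on all of $\T S(\kk)\bbslash T$). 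Applying Lemma~\ref{lem:nobasepoints} with $T = T'$, each $\gamma$ induces a birational self-map $S_T \dashrightarrow S_T$ that is bijective on $\kk$-points, i.e. $\pi_T^{-1}\gamma\pi_T \in \BBir(S_T)$. Since this holds for every generator — and, because $\BBir(S_T)$ is a group and the assignment $\gamma \mapsto \pi_T^{-1}\gamma\pi_T$ is a homomorphism on the subgroup of $\Bir(S)$ it generates — it holds for all of $\Gamma$, so $\pi_T^{-1}\Gamma\pi_T \subset \BBir(S_T)$, witnessing that $\Gamma$ is conjugate to a subgroup of $\BBir(S')$ with $S' = S_T$.

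I expect the main obstacle to be the bookkeeping in the ($\Leftarrow$) direction: verifying that after conjugating back from $S'$ to $S$ the resulting forest isomorphisms on $\T S(\kk)\bbslash T$ are literally the almost automorphisms $\tilde\gamma$ defined in Proposition~\ref{prop:neretin}, rather than merely some forest isomorphisms. This amounts to checking that the identification of $\B S'(\kk)$ with $\T S(\kk) \bbslash T$ induced by $\pi_T$ (and the compatibility of $\pi_T$ with the conjugating morphism) intertwines the two actions on the bubble space — a diagram chase using that the bubble-space map of a composite is the composite of the bubble-space maps, which is stated in the preliminaries. A secondary subtlety, already flagged in the Remark following Proposition~\ref{prop:neretin}, is that $T$ need not be minimal — $\tilde\gamma$ may be representable on a smaller tree — but this causes no trouble since the lemma only asserts existence of \emph{some} common $T$, and enlarging $T$ (to a common admissible tree dominating all the $\T\gamma(\kk)$ for a finite generating set) is harmless because $\Gamma$ is finitely generated. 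That finiteness is exactly where finite generation of $\Gamma$ is used.
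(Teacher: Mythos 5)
Your overall strategy coincides with the paper's (Lemma~\ref{lem:nobasepoints} for the forward direction, the blow-up $\pi\colon S'\to S$ and the associated admissible tree for the converse), and your ($\Leftarrow$) direction is essentially the paper's argument. However, there is a genuine gap in your ($\Rightarrow$) direction: you assert that $\T \gamma(\kk)\subseteq T$, hence that the leaves of $T$ carry no base-point of $\gamma$ or $\gamma^{-1}$, on the grounds that otherwise $(\psi_\gamma,T,T)$ "would not be a valid representative". This is false. The equivalence relation on triples only requires the two forest isomorphisms to agree after passing to a common enlargement, on which the offending leaf is deleted; so a representative $(\psi,T,T)$ of $\tilde\gamma$ may perfectly well have base-points of $\gamma$ sitting at (or below) the leaves of $T$. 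The Remark following Proposition~\ref{prop:neretin} is exactly such an example: an $f$ conjugate to an infinite-order automorphism of a blow-up $S'\to\p^2_\kk$ that fixes $S'(\kk)$ pointwise is represented by $(\psi,T_0,T_0)$ with $T_0$ the minimal admissible tree, yet $\T f(\kk)$ is strictly larger than $T_0$ and $f\notin\BBir(S_{T_0})=\BBir(\p^2_\kk)$. So your conclusion $\pi_T^{-1}\gamma\pi_T\in\BBir(S_T)$ can fail for the given $T$.

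The repair is what the paper does, and what you relegate to a ``secondary subtlety'' at the end without integrating it into the argument: replace $T$ by $T^n$, the tree obtained by adjoining to $T$ all descendants of the vertices of $\partial T$ up to depth $n$. Because forest isomorphisms preserve depth in each component, $\psi$ maps $T^n\bbslash T$ to itself, so $(\psi|_{\T S(\kk)\bbslash T^n},T^n,T^n)$ is again a representative of each $\gamma$ with the \emph{same} tree on both sides; an arbitrary admissible tree ``dominating all the $\T\gamma(\kk)$'', as you propose, need not be preserved in this way. For $n$ large enough the base-points of a finite generating set of $\Gamma$ become interior vertices of $T^n$, Lemma~\ref{lem:nobasepoints} applies to $T^n$, and the generators, hence all of $\Gamma$, are conjugated by $\pi_{T^n}$ into $\BBir(S_{T^n})$. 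This enlargement (not the choice of a common $T$, which is part of the hypothesis) is where finite generation of $\Gamma$ is actually used.
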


\begin{proof}
	First assume that there exists an {admissible} rooted subforest $F\subset\FF S(\kk)$, such that every $g\in\Gamma$ can be represented by a triple of the form $(\psi, F, F)$ for some rooted forest isomorphism $\psi\colon F^C\to F^C$. Let $F^n$ be the rooted admissible subforest obtained from $F$ by adding all vertices of $ F^C$ at distance at most $n-1$ of  the set of roots of $\lambda(F^C)$. Observe that every element in $\Gamma$ can be represented by the triple $(\psi|_{{(F^n)}^C},F^n, F^n)$. For $n$ large enough, $F^n$ contains all the vertices corresponding to {rational} base-points of elements of a {symmetric} generating set $\{\gamma_1,\dots, \gamma_m\}$ of $\Gamma$. As a consequence, Lemma~\ref{lem:nobasepoints} implies that $\Gamma$ is conjugate to a subgroup of $\BBir(S_{F^n})$.
	
	\medskip
	On the other hand, assume that $\Gamma$ is conjugate to a subgroup of $\BBir(S')$ for some regular projective surface $S'$ {over $\kk$}. After blowing up enough {rational} points we may assume that $S'$ lies above $S$ and that $\Gamma$ is conjugate to a subgroup of $\BBir(S')$ by a blow-up $\pi\colon S'\to S$ of rational points. Let $F{\subset \FF S(\kk)}$ be the admissible rooted subforest induced by the points blown up by $\pi$. Then every element $f\in\Gamma$ induces the  {almost automorphism} represented by the triple $(\psi_f,F,F)$, where $\psi_f$ is the forest isomorphism induced by $\pi^{-1}f\pi\in\BBir(S')$.
\end{proof}

\subsection{Embedding and density}\label{Subsection_embedding}

In this section, our base-field $\F_q$ is the field with $q=p^r$ elements. We will consider the Cremona group $\Bir_{\F_q}(\p^2)$ over $\F_q$. Note that a birational map $\varphi\colon \p^2\dashrightarrow S$ to a regular projective surface $S$ over $\F_q$ induces  an isomorphism between $\Bir_{\F_q}(\p^2)$ and $\Bir(S)$ by conjugation. Also recall that a geometrically rational regular projective surface $S$ over $\kk$ is birationally equivalent to $\p^2$ if and only if $S$ admits at least one rational point. 
\begin{lemma}\label{lem:faithful}
	 The morphisms from $\Bir_{\F_q}(\p^2)$ to $\AAut(\FF\p^2(\F_q))$ and to $\AAut(\FF\p^2)$ given by Proposition\ref{prop:neretin} are injective.
\end{lemma}

\begin{proof}
It is sufficient to show that the morphism from $\Bir_{\F_q}(\p^2)$ to $\AAut(\FF\p^2(\F_q))$ is injective.
	Let $f\in\Bir_{\F_q}(\p^2)$ be a transformation that induces the identity, as almost automorphism, on $\FF\p^2(\F_q)$, i.e.\ $f$ fixes all but finitely many rational points in the bubble space $\B \p^2(\F_q)$. We show that $f$ is the identity. After blowing up all the rational base-points of $f$, we obtain a regular projective surface $S$ such that $f\in\BBir(S)$. We will work now over the algebraic closure of $\F_q$. Observe that, as $f$ is a local isomorphism around each point $p\in S{(\F_q)}$, $f$ induces an automorphism on the exceptional divisor $E_p$ of the blow-up of $p$, which is the identity, since $E_p$ contains $q+1\geq 3$ rational points.  Let $p\in S(\F_q)$ and let $U\subset S$ be a neighbourhood of $p$ isomorphic to $\A^2$ with local coordinates $(x,y)$ such that $p=(0,0)$. Consider the curve $C$ given by $\{x=0\}$ and denote by $f(C)$ the strict transform of $C$. 
	\medskip
	
	Let $\pi_p\colon\tilde{S}\to S$ be the blow-up of $p$ and $\tilde{f}$ the birational transformation on $\tilde{S}$ induced by $f$. Locally, $\pi_p$ is given by $(x',y')\to (x'y', y')$. The strict transform $\tilde{C}$ of $C$ under $\pi_p$ is given by $\{x'=0\}$. Assume that $f(C)$ is given by the local equation $p(x,y)=p_m(x,y)+r(x,y)=0$, where $p_m$ is a homogeneous polynomial of degree $m$ and $r(x,y)\in (x,y)^{m+1}$. The total transform of $f(C)$ under $\pi_p$ is therefore given by $y'^m(p_m(x', 1)+s(x',y'))$ for some $s(x',y')$ divisible by $y'$, and the strict transform $\tilde{f}(\tilde{C})$ of $f(C)$ under $\pi_p$ is given by $p_m(x', 1)+s(x',y')=0$. Since $\tilde{f}$ is a local isomorphism around $E_p$ and fixes every point on $E_p$, the strict transform $\tilde{f}(C)$ intersects $E_p$ only in the point $(0,0)$, hence $p_m(x,y)=x^m$. Since $f(C)$ and hence $\tilde{f}(C)$ are irreducible, we have that either $m=1$ and $r(x,y)=0$, or that $r(x,y)$ contains a term of the form $y^l$ for some $l\geq m+1$. Assume that the second assertion is true. In this case, the polynomial $s(x',y')$ contains a term of the form $y'^{l-m}$. We now blow up again the origin in the chart given by the coordinates $(x',y')$ and consider the strict transform of $\tilde{f}(C)$. If we iterate this process finitely many times we eventually obtain a strict transform that intersects the exceptional divisor not just in the origin. But this is not possible, since the lifts of $f$ induce a local isomorphism around all the exceptional divisors and restricts to the identity map on the exceptional divisors. We conclude that $f(C)$ is given by $\{x=0\}$. By choosing different coordinates, we obtain with the same reasoning that $f$ preserves each line in $U$ passing through $p$. We complete $U$ to {$\p^2$}  and observe that $f$ induces an automorphism on {$\p^2$}, which has to be the identity. 
\end{proof}

\begin{remark}\label{remark:Iso}
	Given two integers $r \geq 1$ {$d\geq2$}, let $\mathcal{T}_{d,r}$ denote the rooted tree whose root has degree $r$ and all of whose other vertices have degree $d+1$. If $r=n(d-1)+1$ for some integer $n \geq 1$, then the two groups $\AAut(\T_{d,r})$ and $\AAut(\T_{d})$ are isomorphic as topological groups (since there exists an almost isomorphism from $\mathcal{T}_{d,r}$ to $\mathcal{T}_{d}$).
	
	\medskip
	Hence, when working over the finite field $\F_q$, since the action is faithful, we obtain an embedding of $\Bir_{\F_q}(\p^2)$ into $\AAut(\FF\p^2(\F_q))\simeq\AAut(\T_{d,r})$, for $d=q+1$ and $r=(q+1)q+1$, and hence we can consider  $\Bir_{\F_q}(\p^2)$ as a subgroup of $\mathcal{N}_d$.
\end{remark}

\begin{remark}\label{rem:topology}
	Let us observe that a sequence of birational transformations $\{f_n\}$ in $\Bir_{\F_q}(\p^2)$ converges towards the identity with respect to this topology if there exists a sequence of regular projective surfaces $S_n$ above $\p^2$ of increasing height, such that, for $n$ large enough, $f_n$ induces a birational transformation on $S_n$ fixing all the rational points.
\end{remark}

It is a natural question to ask, which permutations of the rational points $S(\F_q)$ of a surface $S$ over $\F_q$ can be induced by elements in $\BBir(S(\F_q))$. In general, not all permutations of $S(\F_q)$ are induced by elements in $\BBir(S(\F_q))$. However, Cantat showed the following:

\begin{proposition}[{\cite[Theorem~2.1.]{Cantat-Bir_permut}}]\label{prop:cantat}
	Let $\F_q$ be a finite field with $q$ elements and assume that $q$ is odd or $q=2$. Then every permutation of $\p^2(\F_q)$ can be realized by an element in $\BBir_{\F_q}(\p^2)$. 
	
	If $q$ is even, then every even permutation of $\p^2(\F_q)$  can be realized by an element in $\BBir_{\F_q}(\p^2)$. 
\end{proposition}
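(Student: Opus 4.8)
The plan is to analyse the restriction homomorphism
\[
\rho\colon \BBir(\p^{2}_{\F_q})\longrightarrow \Sym\big(\p^{2}_{\F_q}(\F_q)\big)\cong S_N ,\qquad N:=q^{2}+q+1=|\p^{2}_{\F_q}(\F_q)| ,
\]
recording the permutation induced on rational points, and to show it is surjective when $q$ is odd or $q=2$, with $\rho(\BBir(\p^{2}_{\F_q}))\supseteq A_N$ in every case. Since $\PGL_3(\F_q)=\Aut(\p^{2}_{\F_q})$ lies in $\BBir(\p^{2}_{\F_q})$ — linear automorphisms have no base points and are bijective on rational points — and acts $2$-transitively on $\p^{2}_{\F_q}(\F_q)$, the image is a $2$-transitive, hence primitive, subgroup of $S_N$. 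It then suffices to produce one non-linear element of $\BBir(\p^{2}_{\F_q})$ rich enough to force the image up to $A_N$, and, when $q$ is odd or $q=2$, to check that it (or a variant) is an odd permutation.

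First I would set up the basic mechanism: a plane Cremona transformation $g$ for which neither $g$ nor $g^{-1}$ has an $\F_q$-rational base point automatically lies in $\BBir(\p^{2}_{\F_q})$ — blowing up the non-rational base points adds no rational point and makes $g$ an isomorphism near every rational point, while the curves contracted by $g$ carry no rational point — so $g$ acts on $\p^{2}_{\F_q}(\F_q)$ exactly as the induced isomorphism does off its exceptional locus. I apply this to the standard quadratic involution $[x:y:z]\dashmapsto[yz:xz:xy]$, conjugated over $\F_{q^{3}}$ so that its three base points become the three geometric points $p_1,p_2,p_3$ of a closed point of degree $3$ in general position: the three lines $\overline{p_ip_j}$ then form one Galois orbit with no $\F_q$-rational point (a rational point of their union is Frobenius-fixed, hence lies on all three, which have no common point), so the resulting involution $\tau$ lies in $\BBir(\p^{2}_{\F_q})$. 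Identifying the blow-up of $\{p_1,p_2,p_3\}$ with a non-split del Pezzo surface of degree $6$, whose set of rational points is a cyclic group of order $N$ (the norm-one torus $T(\F_q)$) on which $\tau$ acts by inversion, and using that $N$ is odd, one reads off that $\tau$ fixes exactly one rational point and induces a product of $(N-1)/2$ transpositions on $\p^{2}_{\F_q}(\F_q)$.

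With $\tau$ in hand, the endgame is the classification of finite $2$-transitive permutation groups: a $2$-transitive group of degree $N$ containing $\PGL_3(\F_q)$ is almost simple (the affine $2$-transitive groups are too small to contain $\PGL_3(\F_q)$), and, checking that $N=q^{2}+q+1$ is never one of the exceptional $2$-transitive degrees, its socle is $\PSL_3(q)$ or $A_N$ except for the finitely many ``repunit'' coincidences ($q=5$, $N=31$, socle possibly $\PSL_5(2)$) and $N=7$. A socle $\PSL_3(q)$ would confine the group inside $\mathrm{P\Gamma L}_3(\F_q)$, impossible since $\rho(\tau)$ fixes only one point while every semilinear transformation fixes far more; the repunit candidates are excluded by an order comparison (e.g.\ $\PGL_3(5)\not\hookrightarrow\mathrm{Aut}(\PSL_5(2))$, though both act $2$-transitively on $31$ points); and $q=2$ is done by hand, where $\tau$ is a product of three transpositions, an odd permutation not lying in $\PGL_3(2)=\PSL_3(2)$, so $\rho(\BBir(\p^{2}_{\F_2}))=S_7$. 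Hence $\rho(\BBir(\p^{2}_{\F_q}))\supseteq A_N$ always, giving the even-characteristic statement; and when $q\equiv1\pmod 4$ or $q=2$ the sign of $\tau$ is $(-1)^{(N-1)/2}=(-1)^{q(q+1)/2}=-1$, so $\rho(\BBir(\p^{2}_{\F_q}))=S_N$. For the remaining case $q\equiv3\pmod 4$ one replaces $\tau$ by a de Jonquières involution with non-rational base points whose fixed rational curve has an odd-producing number of rational points.

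The hard part, in my view, is twofold. One is making the del Pezzo/torus picture of $\tau$ precise enough to extract its cycle type, and likewise constructing and analysing the de Jonquières involution needed when $q\equiv3\pmod 4$. The other is that a proof avoiding the classification of $2$-transitive groups would have to replace the endgame by a direct induction on the number of blown-up points, and that is where the surface machinery (factorisation into blow-ups, the bubble space, Theorem~\ref{factorization}) would really be used, to control how enough birational involutions permute the rational points.
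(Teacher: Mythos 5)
First, a point of reference: the paper does not prove this proposition at all --- it is imported verbatim from Cantat \cite{Cantat-Bir_permut} and used as a black box in the proof of Proposition~\ref{prop:permutation}. So your attempt has to be measured against Cantat's argument rather than anything internal to this paper, and your skeleton is essentially his. The correct and well-chosen pieces: $\PGL_3(\F_q)=\Aut(\p^2_{\F_q})\subset\BBir(\p^2_{\F_q})$ makes the image in $S_N$, $N=q^2+q+1$, doubly transitive; a Cremona map with no rational indeterminacy point for it or its inverse is bijective on rational points (this is the paper's definition of $\BBir$, and your remark that contracted curves then carry no rational point is the right justification); the quadratic involution $\tau$ based at a degree-$3$ closed point in general position lies in $\BBir(\p^2_{\F_q})$ because the three conjugate lines have no rational point; and identifying $\p^2(\F_q)$ with the cyclic group $\F_{q^3}^*/\F_q^*$ of odd order $N$, on which $\tau$ acts by inversion, correctly gives that $\tau$ fixes exactly one rational point and is a product of $(N-1)/2$ transpositions. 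The endgame via the classification of finite $2$-transitive groups is also viable; the checks you defer (affine socle, exceptional degrees, the $N=31$ coincidence, and the fact that an involutory collineation of $\p^2(\F_q)$ fixes at least $q+1$ points, so $\tau$ cannot induce an element of $\mathrm{P\Gamma L}_3(\F_q)$) are standard, if tedious.

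The genuine gap is the case $q\equiv 3\pmod 4$. There your only candidate for an odd permutation fails: the sign of $\tau$ is $(-1)^{q(q+1)/2}=+1$ because $(q+1)/2$ is even, and $\PGL_3(\F_q)$, having no subgroup of index $2$, also lands in $A_N$. So everything you have actually constructed is even, and the argument delivers only ``image contains $A_N$'' for such $q$, which is strictly weaker than the claimed surjectivity onto $\Sym(\p^2_{\F_q}(\F_q))$. The sentence about ``a de Jonqui\`eres involution with non-rational base points whose fixed rational curve has an odd-producing number of rational points'' is a placeholder for the real remaining work: one must exhibit an involution of higher degree whose base locus (including infinitely near points) avoids rational points, and then compute the parity of the induced permutation, which amounts to counting rational points on the fixed hyperelliptic or branch curve and on the contracted curves. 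None of that is carried out, and it is not a routine verification --- it is precisely where the odd-characteristic-versus-$q\equiv 3\pmod 4$ distinction gets resolved in Cantat's proof. As written, the first assertion of the proposition is established only for $q\equiv 1\pmod 4$ and $q=2$.
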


Let us start with the following partial generalization of Proposition~\ref{prop:cantat}:

\begin{proposition}\label{prop:permutation}
	Let $X$ and $Y$ be rational projective surfaces over a finite field $\F_q$ with $q$ elements. Assume moreover that $X$ and $Y$ lie \emph{above} $\p^2$, i.e.\ that there exist birational morphisms $\pi\colon X\to \p^2$ and $\rho\colon Y\to \p^2$. 
	\begin{enumerate}
		\item If $|X(\F_q)|=|Y(\F_q)|$ then there exists a birational map $\varphi\colon X \dashrightarrow  Y$ that is bijective on rational points.
		\item 	Every even permutation of $X(\F_q)$ is induced by an element $f\in\BBir(X)$.
	\end{enumerate}
\end{proposition}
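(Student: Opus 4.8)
The plan is to reduce everything to Cantat's result (Proposition~\ref{prop:cantat}) by transporting the situation to $\p^2$ via the given birational morphisms and keeping careful track of parities. For part (1), suppose $\pi\colon X\to\p^2$ is the blow-up of a sequence of points $p_1,\dots,p_k\in\B\p^2(\F_q)$ (WLOG, after discarding points not lying over rational points, since a blow-up at a non-rational point adds no rational points) together with possibly some non-rational points contributing nothing to the rational count; similarly $\rho\colon Y\to\p^2$ blows up $q_1,\dots,q_l\in\B\p^2(\F_q)$. Since each blow-up at a rational point of the bubble space removes one rational point and adds $q+1$, the hypothesis $|X(\F_q)|=|Y(\F_q)|$ forces $k=l$. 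Now I want a birational self-map $g$ of $\p^2$, bijective on $\F_q$-points, such that $g$ has no base-points among $p_1,\dots,p_k$, $g^{-1}$ has no base-points among $q_1,\dots,q_l$, and $g$ maps the rational points underlying $p_1,\dots,p_k$ (together with the infinitely-near data) onto those underlying $q_1,\dots,q_l$ — concretely, so that $g$ lifts to a birational map $X\dashrightarrow Y$ that is a local isomorphism at each exceptional divisor, hence bijective on rational points by the argument of Lemma~\ref{lem:nobasepoints}. Equivalently, in the language of the blow-up tree, I need an element of $\Bir(\p^2_\kk)$ whose almost automorphism sends the admissible subtree $\T_X$ (spanned by the root, the $p_i$ and their children) to $\T_Y$; since both trees are admissible subtrees of $\T\p^2_\kk(\kk)$ with the same number of leaves, a single element of $\Bir(\p^2_\kk)$ with controlled base locus suffices. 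The cleanest route is induction on $k$: first handle the proper points using Cantat (an even permutation, or any permutation if $q$ is odd or $q=2$, of $\p^2(\F_q)$ moving a chosen rational point to another chosen one and fixing enough points to keep later steps base-point-free), then for each infinitely-near point descend into the exceptional divisor, where the relevant surface again dominates $\p^2$ and the same argument applies after identifying a chart with a plane.

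For part (2), I would argue similarly but now within $\Bir(X)$ directly. Given an even permutation $\sigma$ of $X(\F_q)$, I want to realise it by a composition of birational self-maps of $X$, each bijective on rational points. The idea: $\pi\colon X\to\p^2$ identifies $X(\F_q)$ with (a subset of) the leaves of the admissible subtree $\T_X\subset\T\p^2_\kk(\kk)$. An even permutation of the leaves of $\T_X$ can be written as a product of an even permutation of the $\F_q$-points of $\p^2$ lying below (which, by Cantat, comes from $\BBir(\p^2_\kk)$ and — by choosing it to fix the relevant points or by first conjugating — can be arranged to have no base-points among $p_1,\dots,p_k$, hence lifts to an element of $\BBir(X)$) composed with permutations internal to each exceptional divisor $E_{p_i}\cong\p^1_{\F_q}$; each such internal permutation, if even, is realised inside $\BBir(X)$ by an element fixing everything outside that divisor (it comes from $\PGL_2(\F_q)$ acting on $E_{p_i}$ when the permutation is realisable by a linear map, and more generally from Cantat applied on a plane chart of the surface carrying $E_{p_i}$). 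The parity bookkeeping is the subtle point: the isomorphism $X(\F_q)\cong\partial\T_X$ lets a 3-cycle on the leaves of one $E_{p_i}$ be moved around, and since $q+1\geq 3$ each $E_{p_i}$ supports a 3-cycle, so the alternating group on $X(\F_q)$ is generated by such moves together with the image of $\mathrm{Alt}(\p^2(\F_q))$; when $q$ is odd or $q=2$ one additionally has the transpositions from Cantat and gets the full symmetric group.

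The main obstacle I expect is exactly this parity/base-point coordination: one must choose the plane transformation in Cantat's theorem so that, after blowing up, it lifts to a map of $X$ that is bijective on \emph{all} rational points (not just those of $\p^2$), which means arranging its base locus to avoid the finitely many rational points of the bubble space dictated by $\pi$ and $\rho$. This is not automatic, but it is flexible because the set of permutations available from $\BBir(\p^2)$ is large (all even ones), so one has room to compose with extra even permutations that "clear" the base points before and after; alternatively, one runs the whole argument leaf-by-leaf on the tree $\T_X$ using Lemma~\ref{lem:nobasepoints} to guarantee at each step that adding one more blown-up point preserves bijectivity on rational points. A secondary subtlety is keeping the parity of the induced permutation on the leaves under control when passing from the plane to $X$: blowing up a rational point replaces one leaf by $q+1$ leaves, so a transposition downstairs may become an even or odd permutation upstairs depending on $q$, which is precisely why part (2) distinguishes the parity of $q$ and matches the dichotomy in Cantat's statement. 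I would organise the proof as a careful induction on the number of blown-up points, invoking Proposition~\ref{prop:cantat} at the base case and Lemma~\ref{lem:nobasepoints} at each inductive step.
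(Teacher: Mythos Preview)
Your inductive framework is broadly right, and the paper does run a joint induction on the height $h$ (the number of rational points blown up over $\p^2$), interleaving (1) and (2). However, your treatment of part~(2) has a genuine gap at the crucial step. You assert that ``permutations internal to each exceptional divisor $E_{p_i}$'' can be realised inside $\BBir(X)$ by an element fixing all rational points outside that divisor, coming from $\PGL_2(\F_q)$ acting on $E_{p_i}$ or from Cantat on a plane chart. Neither justification works: an element of $\BBir(\p^2)$ fixing the point $a$ does lift to $\BBir(X)$ (for $X$ the blow-up at $a$), but the lift acts on $E_a(\F_q)$ through its differential at $a$, not by a freely prescribable permutation, and there is no evident birational self-map of $X$ that moves points on $E_a$ while fixing every rational point off $E_a$. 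So the decomposition ``plane permutation composed with internal exceptional-divisor permutations'' is not available as stated, and this is exactly the hard part of the proposition. Your remark that ``the alternating group on $X(\F_q)$ is generated by such moves together with the image of $\mathrm{Alt}(\p^2(\F_q))$'' presupposes precisely what needs to be constructed.

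The paper supplies the missing ingredient by a different mechanism. For the base case $h=1$ (blow-up of $\p^2$ at a single rational point $a$), it writes down an explicit quadratic transformation $\sigma_{a,b}$ centred at $a$ and at a closed point $b$ of degree~$2$; because $b$ is non-rational, $\sigma_{a,b}$ lies in $\BBir(X)$, and it carries $E_a(\F_q)$ onto the rational points of a line disjoint from $E_a$. Combined with lifts of elements of $\BBir(\p^2)$ fixing $a$, this yields the full claim at height~$1$. The inductive step for~(2) is also organised differently from yours: using~(1), the paper replaces $X$ by the blow-up of two \emph{proper} rational points $p,s$ on a surface $X'$ of height $h-1$, applies the induction hypothesis on the height-$h$ surfaces $X_p$ and $X_s$ separately, and uses an element of $\BBir(X')$ exchanging $p$ and $s$ (hence swapping $E_p$ and $E_s$ upstairs) to mix the two pieces. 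Both the degree-$2$-point quadratic map and the two-proper-points swap are absent from your sketch and are what makes the argument go through.
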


\begin{proof}
By considering only rational base-points we may assume that $\pi$ and $\rho$ are blow-ups in rational points. The \emph{height} of $X$ is the number of base-points of $\pi^{-1}$. Let us observe that $|X(\F_q)|=|Y(\F_q)|$ is equivalent to $X$ and $Y$ having the same height $h$. We will prove the proposition by induction:
	
	\begin{fact}\label{Fact_h1}
		Claim (1) and (2) hold for $h=0$ and Claim (2) holds for $h=1$.
	\end{fact}
	For $h=0$, Claim (1) is trivially true and Proposition~\ref{prop:cantat} states that Claim (2) is true as well. As a next step, we assume $h=1$ and we will show that Claim (2) holds. In this case, $\pi\colon X\to\p^2$ is the blow-up of a single rational point $a\in\p^2{(\F_q)}$ with exceptional divisor $E$. Let now $b\in\p^2$ be a point of degree $2$, in other words, over the algebraic closure, $b$ corresponds to two points $b_1, b_2$ that form a Galois orbit. Consider now the standard quadratic transformation $\sigma_{a,b}$ of $\p^2$ given by the linear system of conics passing through $a$ and $b$. In other words, over the algebraic closure of $\F_q$, $\sigma_{a,b}$ corresponds to the quadratic transformation given by blowing up $a, b_1,$ and $b_2$ and contracting the line defined over $\F_q$ passing through $b_1$ and $b_2$ as well as the two lines passing through $a$ and $b_i$ for $i=1,2$. We now observe that $\sigma_{a,b}\in\BBir(X)$ and that $\sigma_{a,b}$ maps the rational points on $E$ to the rational points on a line $L$ disjoint from $E$.
	\medskip

	Denote by $\BBir(X)_E\subset\BBir(X)$ the subgroup of elements preserving the set $E(\F_q)$ (and hence also $X(\F_q)\setminus E(\F_q)$). Observe that the elements in $\BBir_{\F_q}(\p^2)$ fixing $a$ lift to a subgroup of $\BBir(X)_E$. By Proposition~\ref{prop:cantat}, we therefore get that by restriction $\BBir(X)_E$ induces a surjective homomorphism $\rho\colon \BBir(X)_E\to \Sym(X(\F_q)\setminus E(\F_q))$. In particular, the image of $\rho$ contains all even permutations. Consider now the homomorphism $\eta\colon \BBir({X})_E\to \Sym(E(\F_q))$ given by restriction. Then $\rho(\ker \eta)\subset \Sym(X(\F_q)\setminus E(\F_q))$ is a normal subgroup. Note that $\rho({\ker }\eta)$ is non-trivial, since otherwise we would obtain a surjection $\BBir(X)_E/\ker \eta\to\BBir(X)_E/\ker\rho$, but no subgroup of $\Sym(E(\F_q))$ surjects to the image of $\rho$. Since $X(\F_q)\setminus E(\F_q)\geq 6$, the only non-trivial proper normal subgroup of $\Sym(X(\F_q)\setminus E(\F_q))$ is the subgroup of even permutations $\Sym^+(X(\F_q)\setminus E(\F_q))$, therefore, $\Sym^+(X(\F_q)\setminus E(\F_q))\subset\rho(\ker(\eta))$. In other words, we can realize every even permutation on $X(\F_q)\setminus E(\F_q)$ by an element in $\BBir(X)$ fixing $E(\F_q)$ pointwise. Together with $\sigma_{a,b}$ we can therefore realize every even permutation on $X(\F_q)$ that fixes $X(\F_q)\setminus E(\F_q)$ pointwise, and, in a second step, every even permutation of $X(\F_q)$ (by using the fact that the even permutations are generated by three-cycles and the fact that we have enough points in $X(\F_q)$ that are neither contained in $E(\F_q)$ nor in $\sigma_{a,b}(E(\F_q))$.).
	 So Claim (2) holds for $h=1$.
	
	\begin{fact}
		If Claim (1) and (2) hold for all $X$ and $Y$ of height $\leq h$, then Claim (1) also holds for all $X$ and $Y$ of height $h+1$.
	\end{fact}
	Assume that $X$ is obtained from $\p^2$ by blowing up rational points $p_1,\dots p_{h+1}$ and $Y$ is obtained by blowing up rational points $q_1,\dots, q_{h+1}$. Up to reordering the $p_i$ and $q_i$, we may assume that $X$ is the blow-up of $p_{h+1}$ on the surface $X'$ of height $h$ and $Y$ is the blow-up of $q_{h+1}$ on the surface $Y'$ of height $h$. Using (1) and (2) we know that there exists a birational map $\varphi'\colon X' \dashrightarrow  Y'$ bijective on rational points that maps $p_{h+1}$ to $q_{h+1}$. In particular, $\varphi'$ is a local isomorphism at $p_{h+1}$, so it lifts to a birational map $\varphi\colon X\dashrightarrow Y$ that is bijective on the rational points.
	
	\begin{fact}
		If Claim (2) holds for all $X$ of height $\leq h$ and Claim (1) holds for all surfaces $X$ and $Y$ of height $\leq h+1$, where $h\geq 1$, then Claim (2) holds for all surfaces $X$ of height $h+1$.
	\end{fact}

	Let $X$ be a surface of height $h+1$. By using Claim (1), we may assume that $X$ is the blow-up of two proper rational points $p$ and $s$ on a surface $X'$ of height $h-1$. Denote by $E_p$ and $E_s$ the exceptional divisors on $X$ corresponding to $p$ and $s$ respectively. Let $X_p$ and $X_s$ be the surface obtained from $X'$ by blowing up respectively $p$ and $s$. By using Claim (2), every even permutation on $X_p(\F_q)$ can be realized by an element in $\BBir(X_p)$ and every even permutation on $X_s(\F_q)$ can be realized by an element in $\BBir(X_s)$. Moreover, {since $X'(\F_q)$ contains more than four points,} there exists an element $\tau$ in $\BBir(X')$ that exchanges $p$ and $s$ and hence lifts to an element in $\BBir(X)$ exchanging the sets $E_p(\F_q)$ and $E_s(\F_q)$ with each other. With the same reasoning as in the proof of Fact \ref{Fact_h1} (where $\tau$ takes the role of $\sigma_{a,b}$), this implies that $\BBir(X)$ induces all even permutations on $X(\F_q)$.
\end{proof}
Let $f\in\AAut(\FF\p^2(\F_q))$ be represented by $(\varphi, F, F)$. Note that Proposition~\ref{prop:permutation} implies in particular that there exists an element $g\in\Bir_{\F_q}(\p^2)$ that can be represented by $(\psi, F, F)$ such that $\varphi$ and $\psi$ induce the same permutation on {the roots of $ F^C$}. Indeed, let $F'$ be the rooted forest obtained by adding all the descendants to {the roots of $ F^C$}. Proposition~\ref{prop:permutation} implies that every even permutation of $X_{F'}(\F_q)$ can be realized by some element in $\BBir(X_{F'})$. In particular, every even permutation on the roots of $F'^C$ is induced by an element $(\psi, F',F')$ given by an element from $\BBir(X_{F'})$. This implies that every permutation (even if it is not even) on the roots of $F^C$ is induced by some $(\psi, F, F)$ given by an element in $\BBir(X_{F'})$ and hence by an element $g\in\Bir_{\F_q}(\p^2)$. (This $g$ however, is not always conjugate to an element in $\BBir(X_{F})$, by Theorem~\ref{thm:mainparity}).

\begin{lemma}
Consider $\Bir_{\F_q}(\p^2)$ over $\F_q$ as a subgroup of $\AAut(\FF({\F_q}))$. Then the intersection $\Bir_{\F_q}(\p^2)\cap \Aut(\FF\p^2({\F_q}))$ is dense in $\Aut(\FF\p^2({\F_q}))$.
\end{lemma}

\begin{proof}
	We need to show that for every {rooted} forest automorphism $f\in \Aut(\FF\p^2(\F_q))$ and for every $\epsilon>0$, there exists a $g\in\Bir_{\F_q}(\p^2){\cap\Aut(\FF\p^2(\F_q))}$ such that $d(f,g)<\epsilon$. Let $F_r\subset \FF\p^2(\F_q)$ be the admissible subforest containing all the vertices of distance at most $r$ from the roots. Then $f(F_r)=F_r$. By Proposition~\ref{prop:permutation}, every even permutation of the {roots of $F_r^C$} is induced by an element of $\Bir_{\F_q}(\p^2)$. This implies, as explained above, that every permutation {of the roots of $F_{r}^C$} is induced by an element of $\Bir_{\F_q}(\p^2)$. In particular, there exists a $g\in\Bir_{\F_q}(\p^2)$ such that $g$ induces the same permutation on the roots of $F_r^C$ as $f$. This implies, by the definition of the topology on $\Aut(\FF\p^2(\F_q))$, that $d(g,f)\leq e^{-r}$. 
\end{proof}

The following result together with Lemma~\ref{lem:faithful} and Remark~\ref{remark:Iso} proves Theorem~\ref{thm:MainNeretinCremona}.

\begin{proposition}
	When working over the finite field $\F_q$, the subgroup $\Bir_{\F_q}(\p^2)\subset \AAut(\FF\p^2(\F_q))$ is dense.
\end{proposition}

\begin{proof}
	We will show that every coset of $\Aut(\FF\p^2(\F_q))$ in $\AAut(\FF\p^2(\F_q))$ can be represented by an element in $\Bir_{\F_q}(\p^2)$. Let $f\in\AAut(\FF\p^2(\F_q))$ be an element represented by {$(\varphi, F, R)$}. After possibly enlarging $F$, we may assume that $F=F_r$ for some $r$, where $F_r\subset \FF\p^2(\F_q)$ is the admissible subforest containing all the vertices of distance at most $r$ from the roots.
	\medskip
	
Consider two blow-ups $\pi_F\colon S_F\to \p^2$ and $\pi_R\colon S_R\to \p^2$ corresponding to $F$ and $R$ respectively. Since $F{^C}$ and $R{^C}$ have the same number of roots, $|S_F(\F_q)|=|S_R(\F_q)|$, hence by Proposition~\ref{prop:permutation}, there exists a birational map $g\colon S_F\dashrightarrow S_R$ that is bijective on rational points. The element $\pi_R g\pi_F^{-1}\in\Bir_{\F_q}(\p^2)$ therefore induces an almost automorphism of the form $(\psi, F,R)$ for a suitable $\psi$.   By Proposition~\ref{prop:permutation}, there exists an element in $\Bir_{\F_q}(\p^2)$ that induces the same map on the {roots of $R^C$} as the map $(\psi, F,R)(\varphi, F,R)^{-1}=(\psi\circ\varphi{^{-1}}, R,R)$. Hence, up to composing with another element from $\Bir_{\F_q}(\p^2)$, we may assume that  $(\psi, F,R)(\varphi, F,R)^{-1}$ is in $\Aut(\FF\p^2(\F_q))$.
\end{proof}

\subsection{Parity}\label{Subsection_parity}
The goal of this section is to prove that if $q=2^n$, where $n\geq 2$, and $S$ is a regular projective rational surface over $\F_q$, then the permutation on $S(\F_q)$ induced by an element $f\in\BBir(S)$ is always even. The result will essentially follow from our point of view of looking at $\Bir_{\F_q}(\p^2)$ over $\F_q$ as the group of almost automorphisms of the blow-up {forest}, and the following three results (Lemma~\ref{lem:pgl2}, Theorem~\ref{thm:finiteorder}, and Theorem~\ref{thm:lamyschneider}).
The first lemma is not hard to prove (see for instance \cite[Theorem~2.1]{Zimmermann-bir_permutations}). Note that the statement does no longer hold if $q=2$ or if $q$ is odd ({see Proposition \ref{prop:cantat}}). 

\begin{lemma}\label{lem:pgl2}
	Let $\F_q$ be a finite field of even characteristic with $q\geq 4$ elements. Then the permutations on $\p^1(\F_q)$ induced by $\PGL_2(\F_q)$ are even.
\end{lemma}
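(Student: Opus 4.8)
The plan is to show that every element of $\PGL_2(\F_q)$ acts as an even permutation on the $q+1$ points of $\P^1(\F_q)$ by reducing to generators and computing the sign of each. A clean approach is the following. First I would recall that $\PGL_2(\F_q)$ is generated by the affine maps $x \mapsto ax+b$ with $a \in \F_q^\times$, $b \in \F_q$, together with the single involution $\iota \colon x \mapsto 1/x$ (with the usual conventions $0 \leftrightarrow \infty$); indeed these already generate since the subgroup of affine maps together with $\iota$ acts transitively on $\P^1(\F_q)$ and any Bruhat-type decomposition expresses a general Möbius transformation in terms of them. So it suffices to check that each such generator induces an even permutation.

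For the affine maps: the translation $x \mapsto x+b$ fixes $\infty$ and acts on the group $(\F_q,+) \cong (\Z/2\Z)^n$ by translation; a translation by a nonzero element of an elementary abelian $2$-group of order $2^n$ is a product of $2^{n-1}$ disjoint transpositions, hence a product of $2^{n-1}$ transpositions, which is even as soon as $2^{n-1}$ is even, i.e. $n \geq 2$. The multiplication $x \mapsto ax$ fixes $0$ and $\infty$ and permutes $\F_q^\times$, a cyclic group of order $q-1 = 2^n - 1$ (odd), by multiplication by $a$; such a permutation decomposes into cycles all of the same length $= \mathrm{ord}(a)$, and the number of cycles is $(2^n-1)/\mathrm{ord}(a)$, so the sign is $(-1)^{(2^n-1) - (2^n-1)/\mathrm{ord}(a)}$. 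Since $2^n - 1$ is odd and $(2^n-1)/\mathrm{ord}(a)$ is also odd (both being odd numbers with odd quotient), the exponent is even, so multiplication is an even permutation. Finally, the inversion $\iota$ swaps $0$ and $\infty$ (one transposition) and, on $\F_q^\times$, acts as $x \mapsto x^{-1}$; this inversion fixes the elements of order dividing $2$, of which in the cyclic group of odd order $2^n-1$ there is exactly one (the identity), and pairs up the remaining $2^n - 2$ elements into $(2^n-2)/2 = 2^{n-1} - 1$ transpositions. So $\iota$ is a product of $1 + (2^{n-1}-1) = 2^{n-1}$ transpositions, which is even precisely when $2^{n-1}$ is even, i.e. $n \geq 2$.

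Since all generators induce even permutations and the even permutations form a subgroup (the alternating group on $\P^1(\F_q)$), every element of $\PGL_2(\F_q)$ induces an even permutation. The place where the hypotheses $q$ even and $q \neq 2$ enter is exactly in the two parity computations above: for $q$ odd the multiplicative group has even order and multiplication by a generator is an odd permutation, while for $q = 2$ one has $n = 1$ and $2^{n-1} = 1$ is odd, so translation and inversion become odd — which is why the statement genuinely fails in those cases. I expect the main (minor) obstacle to be bookkeeping the generator set and the cycle-type counts cleanly; one should either cite the standard fact that $\PGL_2$ of a field is generated by affine transformations and $x \mapsto 1/x$, or alternatively invoke that $\PGL_2(\F_q) = \PSL_2(\F_q)$ when $q$ is even and that $\PSL_2(\F_q)$ is simple for $q \geq 4$, hence equal to its intersection with the alternating group once one knows a single nontrivial element is even — but the direct generator-by-generator check above is the most transparent and self-contained route.
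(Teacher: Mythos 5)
Your argument is correct. One small caveat: the paper does not actually prove this lemma; it simply remarks that it ``is not hard to prove'' and cites \cite[Theorem~2.1]{asgarli2019biregular}, so there is no in-paper proof to compare against. Your generator-by-generator computation is the standard self-contained route and all three sign computations check out: translations $x\mapsto x+b$ are fixed-point-free involutions on $\F_q$, hence products of $2^{n-1}$ transpositions; multiplications $x\mapsto ax$ decompose $\F_q^\times$ (of odd order $2^n-1$) into odd-length cycles, each of which is even; and $x\mapsto 1/x$ contributes $1+(2^{n-1}-1)=2^{n-1}$ transpositions. Together with the standard fact that affine maps and inversion generate $\PGL_2(\F_q)$, this gives the claim for $n\geq 2$, and your diagnosis of exactly where $q=2$ and $q$ odd fail is also right. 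The alternative you mention in passing --- that $\PGL_2(\F_q)=\PSL_2(\F_q)$ is simple for even $q\geq 4$, so the sign character must be trivial on it --- is even shorter and avoids all cycle counting; either version would serve as a complete proof here.
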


The following result is more difficult to show:

\begin{theorem}[{\cite{Zimmermann-bir_permutations}}]\label{thm:finiteorder}
Let $q=2^n\geq 4$ and let $f\in\Bir_{\F_q}(\p^2)$ over $\F_q$ be of finite order. Then there exists a regular projective surface $S$ over $\F_q$ such that $f$ is conjugate by a birational map to an automorphism of $S$ and the permutation on $S(\F_q)$ induced by $f$ is even. 
\end{theorem}

Theorem~\ref{thm:finiteorder} is proven in {\cite[Corollary~3.17]{Zimmermann-bir_permutations}}, which states that every automorphism of a del Pezzo surface $S$ induces an even permutation on $S(\F_q)$, {\cite[Theorem~3.18]{Zimmermann-bir_permutations}}, which states that every automorphism of a conic bundle $\mathcal{C}$ over $\p^1$ induces an even permutation on $\mathcal{C}(\F_q)$, and {\cite[Lemma~3.13]{Zimmermann-bir_permutations}}, which states that  {over $\F_q$} every element of finite order in $\Bir_{\F_q}(\p^2)$ is conjugate to an automorphism of a del Pezzo surface or an automorphism of a conic bundle over $\p^1$.
\medskip 

The last crucial ingredient for our proof is the following very recent result by Lamy and Schneider:

\begin{theorem}[\cite{lamy2021generating}]\label{thm:lamyschneider}
	The Cremona group $\Bir_{\F_q}(\p^2)$ over a finite field $\F_q$ is generated by involutions.
\end{theorem}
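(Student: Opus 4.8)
The plan is to reduce the statement to a concrete generating set coming from the factorisation theory of birational maps of surfaces over a non-closed field, and then to rewrite each generator as a product of involutions. By this theory (the two-dimensional Sarkisov program, made explicit over non-closed fields by Iskovskikh, and refined through the presentation of $\Bir(\p^2_\kk)$ over a perfect field obtained by Lamy--Zimmermann and Blanc--Lamy--Zimmermann), $\Bir(\p^2_{\F_q})$ is generated by $\PGL_3(\F_q)=\Aut(\p^2_{\F_q})$ together with: the standard quadratic transformation with three rational base points; de Jonquières transformations (maps preserving a pencil of rational curves, with one base point of multiplicity $d-1$ and a configuration of simple base points); and finitely many further links coming from del Pezzo surfaces of small degree and from conic bundles over $\p^1_{\F_q}$ that cannot be linked directly to $\p^2_{\F_q}$. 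One extra simplification available over $\F_q$ is that $\F_q(t)$ is a $C_1$ field, so every conic bundle over $\p^1_{\F_q}$ has a section, which tames the conic bundle part compared with the case of $\R$ or $\Q$. The goal is then to show that each of these generators lies in the subgroup of $\Bir(\p^2_{\F_q})$ generated by involutions.

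A first observation is that many of these generators are already involutions, or become so after an obvious correction. The standard quadratic map $[x:y:z]\mapsto[yz:xz:xy]$ is an involution, and an arbitrary quadratic map is of the form $\ell_1\sigma\ell_2$ with $\ell_i\in\PGL_3(\F_q)$, hence equals $(\ell_1\sigma\ell_1^{-1})\cdot(\ell_1\ell_2)$, a product of the involution $\ell_1\sigma\ell_1^{-1}$ with a linear map. Likewise the de Jonquières involutions of every degree are genuine involutions, and a general de Jonquières transformation differs from one of them by an element of the ``linear part'' $\PGL_2(\F_q(t))\rtimes\PGL_2(\F_q)$ of the de Jonquières group; the Geiser involution on a del Pezzo surface of degree $2$ and the Bertini involution on a del Pezzo surface of degree $1$ are honest involutions as well, and the $\Aut$-groups of del Pezzo surfaces of higher degree are finite and small. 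Thus, modulo the preceding involutions, everything is reduced to writing elements of a handful of \emph{linear} groups --- principally $\PGL_3(\F_q)$, and the $\PGL_2$-type pieces appearing inside the de Jonquières and conic-bundle groups --- as products of involutions of $\Bir(\p^2_{\F_q})$.

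The main obstacle is precisely that these linear groups are in general \emph{not} generated by their own involutions, so one is forced to exploit the flexibility of the ambient Cremona group. For instance, in characteristic $2$ every involution of $\GL_3$ is unipotent (since $(g-\id)^2=g^2-\id=0$), so all involutions of $\PGL_3(\F_{2^n})$ lie in $\PSL_3(\F_{2^n})$, which is a proper subgroup when $3\mid 2^n-1$; likewise the tori occurring in del Pezzo automorphism groups, and the group $\PGL_2(\F_q(t))$, are not generated by their involutions. To recover the missing cosets one must show that a suitable product of quadratic (or de Jonquières) involutions realises the required linear maps --- equivalently, that the relevant $\F_q^\times/(\F_q^\times)^3$- and square-class characters on $\PGL_3(\F_q)$ and on $\PGL_2(\F_q(t))$ do not extend to group homomorphisms on $\Bir(\p^2_{\F_q})$. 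Through the Lamy--Zimmermann / Blanc--Lamy--Zimmermann presentation this becomes a finite verification of elementary relations. I expect that orchestrating this verification --- together with the case analysis over del Pezzo surfaces of small degree and the ad hoc treatment of the smallest fields $\F_2,\F_3,\F_4$, where the finite groups involved are too small for uniform arguments --- is where the real work lies.
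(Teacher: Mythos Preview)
The paper does not prove this theorem at all: it is quoted verbatim from \cite{lamy2021generating} and used as a black box in the proof of Theorem~\ref{thm:mainparity}. There is therefore no ``paper's own proof'' to compare your proposal against.

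That said, your outline is a fair summary of the strategy one would expect (and, broadly, of what Lamy--Schneider actually do): reduce to Sarkisov links via the presentation of $\Bir(\p^2_{\kk})$ over a perfect field, observe that Geiser, Bertini, quadratic and de Jonqui\`eres \emph{involutions} are already involutions, and then handle the residual linear pieces. You also correctly identify the genuine obstruction, namely that $\PGL_3(\F_q)$ and the $\PGL_2$-type pieces are not generated by their own involutions, so one must produce birational (non-linear) involutions whose products hit the missing cosets. However, your proposal stops exactly at the point where the content begins: you say the verification that the relevant characters do not extend to $\Bir(\p^2_{\F_q})$ ``is where the real work lies'' and leave it there. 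As written, this is an outline of difficulties rather than a proof; the substantive constructions and case analyses are precisely what the cited paper supplies.
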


In the remaining part of this section we assume that $q$ is even and that $q\neq 2$. 

\medskip

Given a rational regular projective surface $S$, let us fix an embedding of our rational blow-up {forest $\FF S(\F_q)$} into the plane $\R^2$. We may assume that the roots lie on the line $y=0$ and that all the vertices on level $n$ lie on the line $y=n$. Let {$F\subset \FF S(\F_q)$ be a rooted subforest}. Then the set of {roots of $F^C$} inherits the ordering from left to right. If $X$ is a surface above $S$, then this embedding induces in particular an order on the points $X(\F_q)$. In that way, we can define a notion of parity for every birational map $f\colon X\dashrightarrow Y$ bijective on rational points, where $X$ and $Y$ are surfaces above $S$: We identify both sets $X(\F_q)$ and $Y(\F_q)$ with $\{1,\dots, n\}$ by the left to right order and we call the map $f$ \emph{even}, if the corresponding permutation of $\{1,\dots, n\}$ is even. Let us make the following observation:

\begin{lemma}
	Given a rational regular projective surface $S$, we can choose the embedding of $\FF S(\F_q)$ into $\R^2$ in such a way that the following property is satisfied: 
	
	\begin{enumerate}[(A)]
		\item For all regular projective surfaces $X$ and $Y$ above $S$, for all birational maps $f\colon X\dashrightarrow Y$ bijective on rational points, and for all $p\in X(\F_q)$ such that $f$ is a local isomorphism around $p$ the following holds: Let $X'$ be the blow-up of $X$ in $p$ and $Y'$ the blow-up of $Y$ in $f(p)$, with exceptional divisors $E_p$ and $E_{f(p)}$. We identify $E_p(\F_q)$ and $E_{f(p)}(\F_q)$ with the set $\{1,\dots, q+1\}$ by virtue of the left to right order. Then the bijection between $E_p(\F_q)$ and $E_{f(p)}(\F_q)$ induced by $f$ is even.
	\end{enumerate}
\end{lemma}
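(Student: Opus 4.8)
The plan is to build the planar embedding of $\T S(\F_q)$ not from an arbitrary choice of left-to-right orders, but from the algebraic geometry of the exceptional divisors, so that each bijection occurring in (A) becomes, after identification, the action of an element of $\PGL_2(\F_q)$ on $\P^1_{\F_q}(\F_q)$; then Lemma~\ref{lem:pgl2} concludes. First fix a bijection $\rho\colon\P^1_{\F_q}(\F_q)\to\{1,\dots,q+1\}$. For every rational point $p$ of the bubble space $\B S$, the exceptional divisor $E_p$ of the blow-up of $p$ is isomorphic to $\P^1$ over the residue field $\F_q$; choose for each such $p$ an isomorphism of $\F_q$-schemes $\psi_p\colon E_p\xrightarrow{\sim}\P^1_{\F_q}$. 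Since a surface $X$ above $S$ comes with a birational morphism $X\to S$ identifying $\B X$ with a subset of $\B S$, and since the exceptional divisor of the blow-up of a bubble point depends only on that point, the single family $\{\psi_p\}_{p\in\B S(\F_q)}$ simultaneously identifies with $\P^1_{\F_q}$ the exceptional divisor of every rational point of every surface above $S$.

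Now embed $\T S(\F_q)$ in the plane by ordering the children of the root arbitrarily and, for every other vertex $p$ — whose children are exactly the rational points of $E_p$ — ordering its children by transporting the standard order of $\{1,\dots,q+1\}$ through $\rho\circ\psi_p$. By construction, for every admissible subtree $T$ with $S_T=X$, the resulting left-to-right order on $X(\F_q)=\partial T$ restricts, on the rational points of each exceptional divisor $E_p$, to the order coming from $\rho\circ\psi_p$.

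To verify (A), let $f\colon X\dashrightarrow Y$ be bijective on rational points, let $p\in X(\F_q)$ be a point at which $f$ is a local isomorphism, and put $p':=f(p)\in Y(\F_q)$. Being an isomorphism between an open neighbourhood of $p$ in $X$ and one of $p'$ in $Y$ that sends $p$ to $p'$, $f$ induces, after blowing up $p$ and $p'$ and restricting to the exceptional loci, an isomorphism of $\F_q$-schemes $\bar f\colon E_p\xrightarrow{\sim}E_{p'}$ — this is precisely the bijection $E_p(\F_q)\to E_{p'}(\F_q)$ appearing in (A). Then $\psi_{p'}\circ\bar f\circ\psi_p^{-1}$ is an automorphism of $\P^1_{\F_q}$, hence equals the action of some $A\in\PGL_2(\F_q)$. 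Reading $\bar f$ through the two left-to-right orders, which by the previous paragraph are $\rho\circ\psi_p$ on the source and $\rho\circ\psi_{p'}$ on the target, yields exactly the permutation $\rho\,A\,\rho^{-1}$ of $\{1,\dots,q+1\}$. By Lemma~\ref{lem:pgl2} the permutation of $\P^1_{\F_q}(\F_q)$ induced by $A$ is even, and since parity is preserved under conjugation by any bijection of the underlying sets, $\rho\,A\,\rho^{-1}$ is even as well. Hence (A) holds.

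The only genuinely delicate point is that an \emph{arbitrary} choice of left-to-right orders would not work, since composing a projective-linear bijection with unrelated reorderings on its two sides may change the parity; what makes the argument go through is that every identification $E_p(\F_q)\cong\{1,\dots,q+1\}$ is algebraic, i.e.\ induced by a scheme isomorphism $E_p\cong\P^1_{\F_q}$. Any two such isomorphisms differ by an element of $\PGL_2(\F_q)=\Aut(\P^1_{\F_q})$, hence by an even permutation by Lemma~\ref{lem:pgl2}, so the particular choices $\psi_p$ are irrelevant for the parity statement; the real content beyond Lemma~\ref{lem:pgl2} is merely to make these algebraic identifications coherently across all surfaces above $S$, which is what the single family $\{\psi_p\}_{p\in\B S(\F_q)}$ provides.
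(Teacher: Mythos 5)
Your proof is correct, and its core is the same as the paper's: arrange the left-to-right orders so that every identification $E_p(\F_q)\cong\{1,\dots,q+1\}$ is ``projectively linear'', so that the transition map in (A) becomes an element of $\PGL_2(\F_q)$ and Lemma~\ref{lem:pgl2} applies. The implementations differ in a way worth noting. The paper fixes one reference point $p'\in S(\F_q)$ with an arbitrary order on $E_{p'}(\F_q)$ and transports that order to every $E_{p''}$ by choosing, for each bubble point $p''$, a birational map $\varphi_{p''}\colon S\dashrightarrow S''$ that is a local isomorphism at $p'$ with $\varphi_{p''}(p')=p''$; the verification then conjugates $f$ back to a birational self-map of $S$ fixing $p'$, which acts on $E_{p'}$ through $\PGL_2$. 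This requires the existence of such maps $\varphi_{p''}$ (essentially the transitivity statements behind Proposition~\ref{prop:permutation}). You instead choose, for each rational bubble point $p$, a scheme isomorphism $\psi_p\colon E_p\xrightarrow{\sim}\P^1_{\F_q}$ and order the children via a fixed bijection $\rho$; the verification is then purely local, since a local isomorphism at $p$ induces a scheme isomorphism $E_p\to E_{f(p)}$ and hence an element of $\Aut(\P^1_{\F_q})=\PGL_2(\F_q)$ after the identifications. This sidesteps the global existence claim entirely and makes explicit (in your final paragraph) why arbitrary orders would fail, at the mild cost of invoking the canonical identification of the exceptional divisors attached to the different representatives of a bubble point, which you correctly flag. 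Both routes end at Lemma~\ref{lem:pgl2}, and conjugating by the fixed bijection $\rho$ preserves parity, so your argument is complete.
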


\begin{proof}

Choosing an embedding of $\FF S(\F_q)$ into $\R^2$ amounts to choosing a left to right order on the points on each height that is compatible with the forest structure. We define these orders inductively. First, choose any order on the points $S(\F_q)$. Next, choose a point $p'\in S(\F_q)$ and let $E_{p'}$ be the exceptional divisor obtained by blowing up $p'$ and fix an order on the points $E_{p'}(\F_q)$. Assume now that we have defined an embedding of the blow-up forest up to height $n$. Let now $p''=(p'', S'', \pi)$ be a point in the bubble space of height $n$. We now need to define an order on the rational points $E_{p''}(\F_q)$, where $E_{p''}$ is the exceptional divisor obtained by blowing up $S''$ in $p''$. For this, we choose a representative $(p'', S'', \pi)$. 
There exists a birational map $\varphi_{p''}\colon S\dashrightarrow S''$ that is a local isomorphism around $p'$ such that $\varphi_{p''}(p')=p''$. We choose such a local isomorphism. Being a local isomorphism, $\varphi_{p''}$ induces a bijective map $E_{p'}(\F_q)\to E_{p''}(\F_q)$. We now equip $E_{p''}(\F_q)$ with the order induced by $E_{p'}(\F_q)$ through this bijection. We continue in that way until we have an embedding of the vertices up to height $n$ and then continue inductively.
 Of course, this order on the points $E_{p''}(\F_q)$ depends on the choice of $\varphi_{p''}$.
 \medskip 
 
 Let us now show that the embedding we obtain in that way (no matter which choices we make) satisfies the desired property (A). Let $X, Y, f,$ and $p$ be as stated and assume that we identify the rational blow-up forests $\mathcal{F} X(\F_q)$ and $\mathcal{F} Y(\F_q)$ to subforests of $\FF S(\F_q)$ through the rational morphisms $\pi\colon X\to S$ and $\rho\colon Y\to S$. So if we consider $p$ and $f(p)$ as elements of $\B S(\F_q)$, we can represent them by $(p, X, \pi)$ and $(f(p), Y, \rho)$ respectively.  Let $(r, X', \pi')$ and $(r',Y', \rho')$ be the representatives of $p$ and $f(p)$ that we chose when defining the order on $E_p$ and $E_{f(p)}$, and let $\varphi_r\colon S\dashrightarrow X'$, $\varphi_{r'}\colon S\dashrightarrow Y'$ be the chosen local isomorphisms around $p'$. We can now observe that the birational transformation $\Psi:=(\varphi_{r'}^{-1}\rho'^{-1}\rho)f(\pi^{-1}\pi'\varphi_r)\colon S\dashrightarrow S$ is a local isomorphism around $p'$ and $\Psi(p')=p'$. Therefore, $\Psi$ induces a morphism from $E_{p'}$ to itself which is given by an element from $\PGL_2$ and hence, by Lemma~\ref{lem:pgl2}, induces an even permutation on $E_{p'}(\F_q)$. Since, by construction, $\varphi_r$ and $\varphi_{r'}$ induce order preserving maps from $E_{p'}(\F_q)$ to $E_p(\F_q)$ and $E_{f(p)}(\F_q)$ respectively, this implies that the bijection $E_p(\F_q)\to E_{f(p)}(\F_q)$ induced by $f$ is even.
\end{proof}

From now on, we will always assume that $\FF S(\F_q)$ is embedded in $\R^2$ in such a way that property (A) is satisfied.

\begin{lemma}\label{lem:blowupparity}
	Let $X$ and $Y$ be regular projective surfaces above a rational regular projective surface $S$, let $f\colon X\dashrightarrow Y$ be a birational map bijective on rational points, and let $\epsilon$ be the parity of the induced map $f\colon X(\F_q)\to Y(\F_q)$, if we identify $X(\F_q)$ and $Y(\F_q)$ with $\{1,\dots, n\}$. Then for every $p\in X(\F_q)$ the induced bijective map $\tilde{f}\colon X_p(\F_q)\to Y_{f(p)}(\F_q)$ has also parity $\epsilon$, where $X_p$ and $Y_{f(p)}$ are the surfaces obtained by blowing up $p$ {and} $f(p)$, respectively. 
\end{lemma}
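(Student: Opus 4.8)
The plan is to reduce the claim to property (A). Blowing up the point $p \in X(\F_q)$ replaces $p$ by its exceptional divisor $E_p$, which contains exactly $q+1$ rational points; likewise blowing up $f(p) \in Y(\F_q)$ replaces it by $E_{f(p)}$ with $q+1$ rational points. So the set $X_p(\F_q)$ is, as an ordered set (with the left-to-right order coming from the fixed embedding of $\T S(\F_q)$ in $\R^2$), obtained from $X(\F_q) = \{1,\dots,n\}$ by deleting the element corresponding to $p$ and inserting in its place a consecutive block of $q+1$ elements; and similarly for $Y_{f(p)}(\F_q)$ from $Y(\F_q)$. The induced map $\tilde f\colon X_p(\F_q) \to Y_{f(p)}(\F_q)$ agrees with $f$ on all points other than those in $E_p$ (here I use that $f$ being bijective on rational points carries the bubble-space points above $X$ other than $p$ to those above $Y$ other than $f(p)$, compatibly with the tree structures), and on $E_p(\F_q)$ it is exactly the bijection onto $E_{f(p)}(\F_q)$ induced by $f$ at $p$.

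First I would make this combinatorial reduction precise. Write the permutation $\sigma$ of $\{1,\dots,n\}$ represented by $f$, and let $k \in \{1,\dots,n\}$ be the position of $p$ (so $f(p)$ sits in position $\sigma(k)$). Then $\tilde f$, as a permutation of the $n+q$ element ordered set, can be described as follows: on the block of $q+1$ positions replacing $k$ it acts by the bijection $\beta\colon E_p(\F_q)\to E_{f(p)}(\F_q)$ (viewed, via the left-to-right orders, as a permutation of $\{1,\dots,q+1\}$ followed by the order-preserving identification of that block with the block replacing $\sigma(k)$); on the remaining $n-1$ positions it acts by the ``same'' permutation as $\sigma$ does on $\{1,\dots,n\}\setminus\{k\}$, with the understanding that a single slot has been expanded into $q+1$ consecutive slots in both source and target. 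A clean way to see the parity is to factor $\tilde f = \tau \circ \iota$, where $\iota$ first applies $\beta$ inside the block at position $k$ (and the identity elsewhere) and $\tau$ then moves the whole block of $q+1$ consecutive elements to the block of $q+1$ consecutive positions dictated by $\sigma$, permuting the other $n-1$ blocks-of-size-one according to $\sigma$. The sign of $\iota$ equals the sign of $\beta$, which is $+1$ by property (A). The sign of $\tau$ equals the sign of $\sigma$: moving a contiguous block of $m$ elements past $j$ contiguous single elements is an even permutation when $m$ is odd (it is $j\cdot m$ transpositions in the worst bubble-sort count, but more robustly: reversing a block of odd length is even, and a block transposition of an odd-length block with an adjacent element is an even permutation), and here $m = q+1$ is odd since $q$ is even; hence conjugating/expanding $\sigma$ this way does not change its sign. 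Therefore $\mathrm{sgn}(\tilde f) = \mathrm{sgn}(\iota)\cdot\mathrm{sgn}(\tau) = 1\cdot \epsilon = \epsilon$.

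The main obstacle I anticipate is bookkeeping rather than mathematical depth: one must check carefully that the left-to-right order on $X_p(\F_q)$ really is the ``block expansion'' of the order on $X(\F_q)$ at the slot of $p$ — i.e., that the children of $p$ appear consecutively and in the induced order, which is exactly how $\T S(\F_q)$ was embedded — and that $\tilde f$ restricted to the complement of $E_p$ is literally $f$ under the identifications $\mathcal F X_p(\F_q)\hookrightarrow \T S(\F_q)$, $\mathcal F Y_{f(p)}(\F_q)\hookrightarrow \T S(\F_q)$. Once these identifications are in place, the parity computation is the elementary fact that inserting an odd-size block in place of a single element, consistently in source and target, multiplies the sign of a permutation by the sign of the permutation acting within the inserted block; combined with property (A) giving that this inner permutation is even, the result follows. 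I would also remark that this is precisely where the hypothesis ``$q$ even'' (hence $q+1$ odd) is used, mirroring the role it plays in Lemma~\ref{lem:pgl2} and property (A).
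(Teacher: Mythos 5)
Your overall strategy is exactly the paper's: view the blow-up as replacing one slot by a consecutive block of $q+1$ slots, use property (A) to dispose of the permutation inside the block, and use the evenness of $q$ (so $q+1$ is odd) to see that expanding a single slot into an odd-length block does not change the sign of the outer permutation. The paper implements the last step by comparing an explicit cycle $\sigma$ of sign $(-1)^{r-s}$ with its block expansion $\tilde\sigma$ of sign $(-1)^{(r-s)(q+1)}$, which is the same computation you are doing.

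However, the parenthetical justifications you give for the key combinatorial fact are wrong as stated, even though the conclusion you draw from them is right. Moving a contiguous block of $m$ elements past $j$ single elements costs $jm$ adjacent transpositions, so its sign is $(-1)^{jm}$; for $m$ odd this equals $(-1)^{j}$, which is \emph{not} always $+1$, so this move is not ``an even permutation when $m$ is odd.'' Likewise, transposing an odd-length block with one adjacent element is an $(m+1)$-cycle of sign $(-1)^{m}=-1$ (odd, not even), and reversing a block of odd length $2t+1$ has sign $(-1)^{t}$ (e.g.\ reversing three elements is a single transposition). The statement you actually need, and which does hold, is that for $m$ odd the block move has the \emph{same} sign $(-1)^{j}$ as the corresponding move of a single element past $j$ elements; summing over the cycle structure of $\sigma$, this gives $\mathrm{sgn}(\tau)=\mathrm{sgn}(\sigma)$. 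With that correction your argument goes through and coincides with the paper's proof.
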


\begin{proof}
	Assume that $p$ is on position $s$ and $p'=f(p)$ is on position $r$. 
	The rational points $E_p(\F_q)$ on the exceptional divisor of the blow-up of $p$ correspond to the numbers $\{s,\dots, s+q\}$ under the identification of $X_p(\F_q)$ with $\{1,\dots, n+q\}$ and the rational points $E_{p'}(\F_q)$ on the exceptional divisor of the blow-up of $p'$ correspond to the numbers $\{r,\dots, r+q\}$.  The situation is illustrated in Figures~\ref{fig:X} and \ref{fig:Y}. 
	
%
\begin{figure}
		\begin{subfigure}[a]{\textwidth}
			\begin{center}
\begin{tikzpicture}
	\node (1) at (-0.5,0) {$1$};
	\node (2) at (0.5,0) {$\dots$};
	\node (3) at (1.5,0) {$s-1$};
	\node[draw,circle] (4) at (3,0) {$p$};
	\node (5) at (4.5,0) {$s+q+1$};
	\node (6) at (6,0) {$\dots$};
	\node (7) at (7.5,0) {$r+q+1$};
	\node (8) at (9,0) {$\dots$};
	\node (9) at (10.5,0) {$n+q$};
	\node (10) at (1,-1) {$s$};
	\node (11) at (2.5,-1) {$s+1$};
	\node (12) at (3.7,-1) {$\dots$};
	\node (13) at (5,-1) {$s+q$};
	\draw (4) -- (10);
	\draw (4) -- (11);
	\draw (4) -- (12);
	\draw (4) -- (13);
\end{tikzpicture}			\end{center}
\caption{The rational points of $X_p$ in their left to right order.}\label{fig:X}
	\end{subfigure}

	\begin{subfigure}[b]{\textwidth}
		\vspace{10mm}
				\begin{center}
		\begin{tikzpicture}
			\node (1) at (-0.5,0) {$1$};
			\node (2) at (0.5,0) {$\dots$};
			\node (3) at (1.5,0) {$s$};
			\node (4) at (2.5,0) {$\dots$};
			\node (5) at (3.5,0) {$r-1$};
			\node[draw,circle] (6) at (5,0) {$p'$};
			\node (7) at (6.5,0) {$r+q+1$};
			\node (8) at (8,0) {$\dots$};
			\node (9) at (9.5,0) {$n+q$};
			\node (10) at (3,-1) {$r$};
			\node (11) at (4.5,-1) {$r+1$};
			\node (12) at (5.7,-1) {$\dots$};
			\node (13) at (7,-1) {$r+q$};
			\draw (6) -- (10);
			\draw (6) -- (11);
			\draw (6) -- (12);
			\draw (6) -- (13);
		\end{tikzpicture}
\end{center}
	\caption{The rational points of $Y_{f(p)}$ in their left to right order.}\label{fig:Y}
			\vspace{5mm}
			\end{subfigure}
			\caption{The order on the rational points on the blow-ups.}
	\end{figure}
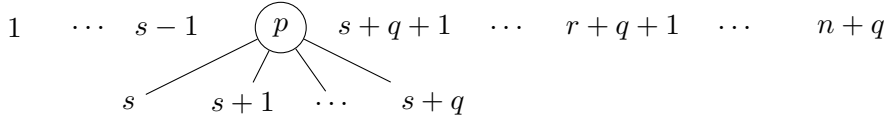
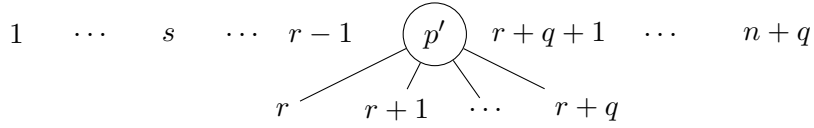

	 Let $\tau$ be the permutation of $\{1,\dots, n\}$ induced by $f$ and let $\tilde{\tau}$ be the permutation on $\{1,\dots, n+q\}$ induced by $\tilde{f}$. Hence we have the following commutative diagram:

	\begin{center}
		\begin{tikzcd}
		\{1,\dots, (s,\dots, s+q), \dots, n+q\} \arrow[rr, "\tilde{\tau}"] \arrow[d, "\alpha"] &  & \{1,\dots,(r,\dots, r+q),\dots, n+q\} \arrow[d, "\beta"] \\
		\{1,\dots, s,\dots, n\} \arrow[rr, "\tau"]                    &  & \{1,\dots, r, \dots, n\},                   
		\end{tikzcd}
	\end{center}
	
	where $\alpha$ maps $\{s,\dots, s+q\}$ to $s$ and is bijective and order preserving on $\{1,\dots, s-1, s+q+1,\dots, n+q\}$, and $\beta$ maps $\{r,\dots, r+q\}$ to $r$ and is bijective and order preserving on $\{1,\dots, r-1, r+q+1,\dots, n+q\}$. Since the embedding of $\FF S(\F_q)$ satisfies property (A), we may assume - up to composing $\tilde{\tau}$ by an even permutation - that the restriction of $\tilde{\tau}$ to $\{s,\dots, s+q\}$ preserves the order.
	\medskip
	
	Let $\sigma$ be the permutation of $\{1,\dots, n\}$ that maps $r$ to $s$ and that is order preserving on $\{1,\dots,r,\dots n\}$ and let $\tilde{\sigma}$ be the permutation of $\{1,\dots, n+q\}$ such that $\tilde{\sigma}(\{r,\dots, r+q\})=\{s,\dots, s+q\}$ and such that the restriction to $\{1,\dots, r-1, r+1,\dots, n+q\}$ as well as to $\{r,\dots, r+q\}$ is order preserving. Then $\sigma\tau$ fixes $s$ and $\tilde{\sigma}\tilde{\tau}$ fixes $\{s,\dots, s+q\}$ pointwise. We obtain the following commutative diagram:
	
		\begin{center}
		\begin{tikzcd}
		\{1,\dots, (s,\dots, s+q), \dots, n+q\} \arrow[rr, "\tilde{\sigma}\tilde{\tau}"] \arrow[d, "\alpha"] &  & \{1,\dots, (s,\dots, s+q), \dots, n+q\} \arrow[d, "\alpha"] \\
		\{1,\dots, s,\dots, n\} \arrow[rr, "\sigma\tau"]                    &  & \{1,\dots, s, \dots, n\}.                 
		\end{tikzcd}
	\end{center}

	Let us now observe that $\tilde{\sigma}\tilde{\tau}$ and $\sigma\tau$ have the same parity. Moreover, the parity of $\sigma$ is $(-1)^{r-s}$ and the parity of $\tilde{\sigma}$ is $(-1)^{(r-s)(q+1)}$. By assumption, $q$ is even and hence $\sigma$ and $\tilde{\sigma}$ have the same parity. We obtain that $\tau$ and $\tilde{\tau}$ have the same parity, which proves the lemma. 
\end{proof}

To an almost automorphism of $\FF S(\F_q)$ represented by a triple $(\psi, F,F')$ we would like to associate a notion of parity by saying that $\psi$ is \emph{even}, if the induced bijection on the {roots of the complements $\lambda( F^C)\to \lambda({F'}^C)$} is even for all $F$ and $F'$ large enough, with respect to the above defined ordering, and \emph{odd}, if the the bijection is odd. If $q$ is odd or $q=2$, this notion is not well defined, since it depends on the choice of the representative $(\psi, F,F')$. However, the next lemma shows with the help of Lemma~\ref{lem:pgl2} that the notion is well defined for almost automorphisms induced by birational maps $\Bir_{\F_q}(\p^2)$ over $\F_q$, if $q$ is even and $q\neq 2$.

\begin{lemma}\label{lem:parity}
	Assume that $q$ is even and $q\neq 2$ and let $S$ be a rational regular projective surface over $\F_q$. Then for all triples $(\psi, F, F')$ representing the almost automorphism on $\FF S(\F_q)$ given by an $f\in\Bir(S)$, such that $F$ contains the vertices corresponding to the rational base-points of $f$ and $F'$ contains the vertices corresponding to the rational base-points of $f^{-1}$, the parity of the induced map $\lambda( F^C)\to \lambda(F'^C)$ is the same. 
	
	This will be called the \emph{parity of $f$}.
\end{lemma}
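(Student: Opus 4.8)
The plan is to show that every triple $(\psi,T,T')$ as in the statement induces a bijection $\partial T\to\partial T'$ of one fixed parity $\epsilon(f)$, namely the parity of the bijection $\partial\T f(\F_q)\to\partial\T f^{-1}(\F_q)$ attached to the canonical representative $\tilde{f}=(\psi_f,\T f(\F_q),\T f^{-1}(\F_q))$ from Section~\ref{Section_blow_up_tree}.

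First I would pass to the normalized situation $T\supseteq\T f(\F_q)$ and $T'\supseteq\T f^{-1}(\F_q)$. Since $T$ is admissible, contains every rational base-point of $f$, and $(\psi,T,T')$ represents $\tilde{f}$, each such base-point must be an \emph{interior} vertex of $T$: a base-point $b$ cannot be a leaf of $T$, since then $\psi$ would restrict to a forest isomorphism defined on the whole subtree of $\T S(\F_q)$ hanging below $b$, which is incompatible with $f$ being undefined there and hence with the canonical representative of $\tilde{f}$. Thus $T$ contains all children of the rational base-points of $f$, i.e.\ $T\supseteq\T f(\F_q)$, and symmetrically $T'\supseteq\T f^{-1}(\F_q)$. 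Then the leaves of $T$ (resp.\ $T'$) carry no base-point of $f$ (resp.\ $f^{-1}$), so Lemma~\ref{lem:nobasepoints} applies: $f$ induces a birational map $S_T\dashrightarrow S_{T'}$ bijective on $\F_q$-points, $\psi$ restricts on the leaves to the bijection $S_T(\F_q)\to S_{T'}(\F_q)$ it induces, and, because $\T S(\F_q)$ is embedded in $\R^2$ with property (A), the left-to-right order on $\partial T$ matches the one on $S_T(\F_q)$. So the parity of $\partial T\to\partial T'$ is the parity of the permutation induced by $f$ on $S_T(\F_q)$.

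Next I would induct on the number of vertices of $T$ outside $\T f(\F_q)$. If $T=\T f(\F_q)$ then $T'=\T f^{-1}(\F_q)$ and $\psi=\psi_f$, so the parity is $\epsilon(f)$ by definition. If $T\supsetneq\T f(\F_q)$, choose a vertex of $T\setminus\T f(\F_q)$ of maximal depth and let $v$ be its parent; one checks that $v$ is interior in $T$, that none of the $q+1$ children of $v$ lies in $\T f(\F_q)$, and that all of them are leaves of $T$, so $T^-:=T\setminus\{\text{children of }v\}$ is admissible with $\T f(\F_q)\subseteq T^-\subsetneq T$, $v$ is a leaf of $T^-$, and $S_T=(S_{T^-})_v$ is the blow-up of the rational point $v\in S_{T^-}(\F_q)$. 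Taking the representative $(\psi^-,T^-,(T')^-)$ of $\tilde{f}$ with first tree $T^-$ and refining it back to first tree $T$ shows that $(T')^-$ is obtained from $T'$ by deleting the children of $v':=\psi^-(v)=f(v)$, so $(T')^-\supseteq\T f^{-1}(\F_q)$ is admissible and $S_{T'}=(S_{(T')^-})_{f(v)}$. Now Lemma~\ref{lem:blowupparity}, applied to the birational map $S_{T^-}\dashrightarrow S_{(T')^-}$ (bijective on $\F_q$-points by Lemma~\ref{lem:nobasepoints}) and the point $p=v$, gives that the permutation induced on $(S_{T^-})_v(\F_q)=S_T(\F_q)$ has the same parity as the one induced on $S_{T^-}(\F_q)$; by the first paragraph these are the parities of $\partial T\to\partial T'$ and $\partial T^-\to\partial(T')^-$ respectively, and the latter is $\epsilon(f)$ by the inductive hypothesis. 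This closes the induction.

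The main obstacle I expect is the combinatorial bookkeeping of the inductive step: verifying that enlarging a representing triple of $\tilde{f}$ by one admissible generation corresponds exactly to blowing up a single rational point on both source and target, that $(T')^-$ still contains $\T f^{-1}(\F_q)$, and that the order identifications (leaves of $T$ versus $S_T(\F_q)$ in left-to-right order, the boundary map $\partial T\to\partial T'$ versus the bijection induced by $f$) are compatible, so that Lemma~\ref{lem:blowupparity} can be invoked verbatim at each step. The small argument showing that a rational base-point must be an interior vertex of every admissible tree appearing in a representative of $\tilde{f}$ also needs to be written out with some care.
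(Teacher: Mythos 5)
Your proof is correct and follows essentially the same route as the paper: both reduce the statement to a one-blow-up-at-a-time induction powered by Lemma~\ref{lem:blowupparity}, identifying $\partial T\to\partial T'$ with the bijection $S_T(\F_q)\to S_{T'}(\F_q)$ induced by $\pi_{T'}^{-1}f\pi_T$. The only organizational difference is that you anchor the induction at the minimal representative $(\psi_f,\T f(\F_q),\T f^{-1}(\F_q))$ and descend to it, whereas the paper compares nested representatives going upward (implicitly passing through a common refinement to handle two arbitrary triples); you also spell out the small point, left implicit in the paper, that the rational base-points must be interior vertices of $T$ so that Lemma~\ref{lem:nobasepoints} applies.
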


\begin{proof}
	Let $(\psi, F, F')$ be a triple representing $f$ and denote by $\pi_F\colon S_F \to S$ and $\pi_{F'}\colon S_{F'}\to S$ the blow-ups corresponding to $F$ and $F'$ respectively. Since $F$ and $F'$ contain the base-points of $f$ and $f^{-1}$ respectively, we can assume that $\psi$ corresponds to the bijection $S_F(\F_q)\to S_{F'}(\F_q)$ induced by the birational map $\pi_{F'}^{-1}f\pi_F$ that is bijective on the rational points. Let $(\tilde{\psi}, \tilde{F}, \tilde{F}')$ be another triple representing $f$ such that $F\subset\tilde{F}$ and $F'\subset \tilde{F}'$. 
	We will now show that the parity of the induced map on $\lambda( \tilde{F}^C)\to\lambda(\tilde{F}'^C)$ is the same as the parity of the induced map $\lambda( F^C)\to \lambda(F'^C)$. We proceed inductively on the number of interior vertices of $F$. To do so, assume that $\tilde{F}$ has one interior vertex more than $F$. Again, we denote by $\pi_{\tilde{F}}\colon S_{\tilde{F}}\to S$ and $\pi_{\tilde{F}'}\colon S_{\tilde{F}'}\to S$ the blow-ups corresponding to $\tilde{F}$ and $\tilde{F}'$. As $\tilde{F}$ has one interior vertex more than $F$, there exists a point $p\in S_F$ such that $S_{\tilde{F}}$ is obtained from $S_F$ by blowing up $p$. Similarly, $S_{\tilde{F}'}$ is obtained from $S_{F'}$ by blowing up the point $\pi_{F'}^{-1}f\psi_F(p)$. It follows now from Lemma~\ref{lem:blowupparity} that the map $S_{\tilde{F}}(\F_q)\to S_{\tilde{F}'}(\F_q)$ induced by $\pi_{\tilde{F}'}^{-1}f\pi_{\tilde{F}}$ has the same parity as the map $S_F(\F_q)\to S_{F'}(\F_q)$ induced by $\pi_{F'}^{-1}f\pi_F$ and consequently that the map $$\lambda( \tilde{F}^C)\to \lambda(\tilde{F}'^C)$$ induced by $\tilde{\psi}$ has the same parity as the map $\lambda( F^C)\to \lambda(F'^C)$ induced by $\psi$. This completes the proof.
\end{proof}

\begin{lemma}\label{lem:ParBBir}
	Let $f\in\Bir_{\F_q}(\p^2)$ be a birational transformation over $\F_q$ that is conjugate by a birational map to an element $\tilde{f}\in\BBir(S)$, for some regular projective surface $S$. Then the parity of $f$ coincides with the parity of the induced map $S(\F_q)\to S(\F_q)$. 
\end{lemma}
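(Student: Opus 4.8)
The plan is to compare the two quantities by computing both of them on a single auxiliary surface sitting above $\p^2_{\F_q}$ \emph{and} above $S$. Write $f=g^{-1}\tilde f g$ for a birational map $g\colon\p^2_{\F_q}\dashrightarrow S$ and resolve it, $\p^2_{\F_q}\xleftarrow{\;\eta\;}W\xrightarrow{\;\rho\;}S$, with $\eta,\rho$ compositions of blow-ups. On $W$ we obtain the single birational transformation
\[
\phi_W:=\rho^{-1}\tilde f\rho=\eta^{-1}f\eta ,
\]
the two expressions agreeing because $g=\rho\eta^{-1}$. I will show $\mathrm{parity}(f)=\mathrm{sgn}\!\big(\phi_W|_{W(\F_q)}\big)$ and $\mathrm{sgn}\!\big(\phi_W|_{W(\F_q)}\big)=\mathrm{sgn}\!\big(\tilde f|_{S(\F_q)}\big)$; together these give the lemma. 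Before doing so I would enlarge the resolution (see the last paragraph) so that the morphism $\rho\colon W\to S$ blows up a set $P\subset\B S$ whose rational members $P\cap\B S(\F_q)$ form a $\tilde f$-invariant set.

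\emph{First equality.} Every element of $\BBir(S)$ is a local isomorphism at each rational point of $S$ (neither it nor its inverse has a rational base-point, and it sends rational points to rational points), so $\tilde f$ lifts through $\rho$; since the rational part of the blow-up locus of $\rho$ is $\tilde f$-invariant, the lift $\phi_W$ lies in $\BBir(W)$. Let $T\subset\T\p^2_{\F_q}(\F_q)$ be the admissible subtree cut out by the rational points blown up by $\eta$, and $S_T\to\p^2_{\F_q}$ the corresponding blow-up; then $\eta$ factors as $W\xrightarrow{\tau}S_T\xrightarrow{\pi_T}\p^2_{\F_q}$ where $\tau$ blows up only non-rational points, hence $\tau$ restricts to a bijection $W(\F_q)\xrightarrow{\sim}S_T(\F_q)$. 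Conjugating $\phi_W$ through $\tau$ gives $f^\flat:=\pi_T^{-1}f\pi_T\in\BBir(S_T)$ with $\mathrm{sgn}(f^\flat|_{S_T(\F_q)})=\mathrm{sgn}(\phi_W|_{W(\F_q)})$. As $f^\flat$ has no rational base-point, $\T f(\F_q)$ and $\T f^{-1}(\F_q)$ are contained in $T$, so the almost automorphism induced by $f$ is represented by the triple $(\psi_{f^\flat},T,T)$; Lemma~\ref{lem:parity} then identifies $\mathrm{parity}(f)$ with the parity of the induced map $\partial T\to\partial T$, which under the identification $\partial T=S_T(\F_q)$ is precisely the permutation $f^\flat|_{S_T(\F_q)}$ — and the parity of a permutation does not depend on the chosen left-to-right labelling. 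Hence $\mathrm{parity}(f)=\mathrm{sgn}(\phi_W|_{W(\F_q)})$.

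\emph{Second equality.} Here I would climb from $S$ up to $W$ one blow-up at a time. Blowing up the non-rational points of $P$ does not touch the rational locus and keeps the (lifted) map in $\BBir$, so it suffices to treat $P':=P\cap\B S(\F_q)$. Decompose $P'$ into (finite) $\tilde f$-orbits and blow them up point by point: each elementary step turns a birational map $X\dashrightarrow Y$ bijective on rational points into its lift $X_p\dashrightarrow Y_{\tilde f(p)}$, which by Lemma~\ref{lem:blowupparity} has the same parity; after traversing a whole orbit the source and target surfaces coincide and one recovers a self-map in $\BBir$ of the partially blown-up surface with the same sign as before. Running through all orbits of $P'$ yields $\mathrm{sgn}(\phi_W|_{W(\F_q)})=\mathrm{sgn}(\tilde f|_{S(\F_q)})$, which finishes the proof.

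The delicate point — the step I would be most careful about — is the enlargement of the resolution: a finite subset of the bubble space $\B S$ cannot in general be enlarged to a $\tilde f$-invariant \emph{finite} set, since $\tilde f$-orbits in $\B S$ are typically infinite. What rescues the argument is that $\tilde f$ preserves the level of an infinitely near point and permutes, for each $l$, the finite set of rational points of $\B S$ of level $\leq l$ (this uses that $\F_q$ is finite, so that $S(\F_q)$ is finite and each rational point has exactly $q+1$ rational children). Consequently the $\tilde f$-orbit of any finite set of rational points of $\B S$ is again finite, and one may take $P$ to be the union of $\Base(g^{-1})$, the $\tilde f$-orbit of its rational members, and the necessary parent-closures; this keeps $W$ above $\p^2_{\F_q}$ and above $S$ while making the rational part of the blow-up locus of $\rho$ genuinely $\tilde f$-invariant. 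Everything else is bookkeeping with Lemma~\ref{lem:parity} and Lemma~\ref{lem:blowupparity}.
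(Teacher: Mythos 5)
Your proof is correct and follows essentially the same route as the paper's: pass to a common resolution lying above both $\p^2_{\F_q}$ and $S$, identify the parity of $f$ with the sign of the lifted permutation via Lemma~\ref{lem:parity} (over $\p^2_{\F_q}$), and descend to $S(\F_q)$ via Lemma~\ref{lem:blowupparity}, which is exactly the inductive content of Lemma~\ref{lem:parity} over the base $S$. You are in fact more careful than the paper's short argument on two points it glosses over — factoring out the non-rational blow-ups in the resolution and arranging the rational part of the blown-up locus to be $\tilde f$-invariant so that the lift is a genuine self-map — and both of these refinements are handled correctly.
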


\begin{proof}
Let $S'$ be the surface obtained by iteratively blowing up all the rational points on $S(\F_q)$ so that it dominates $\p^2$. Then $f$ is conjugate to an element $f'\in\BBir(S')$, since in particular, the rational base-points of $f$ and $f^{-1}$ have been blown up to obtain $S'$. Consequently, by Lemma \ref{lem:parity}, the parity of $f$ is given by the parity of the map $f'$.  By applying Lemma~\ref{lem:parity}, we see that the parity of $f'$ is the same as the parity of the map $S(\F_q)\to S(\F_q)$ induced by $\tilde{f}$.
\end{proof}

\begin{lemma}\label{lem:ParComp}
Let $f,g\in\Bir_{\F_q}(\p^2)$ over $\F_q$, where $q$ is even and $q\neq 2$. Then the parity of $fg$ is the product of the parities of $f$ and $g$.
\end{lemma}

\begin{proof}
	
	This follows directly from the definition of the parity. Let $(\varphi, R, F)$ be the almost automorphism of $\FF\p^2(\F_q)$ induced by {$f$}and $(\psi, U,R)$ the one induced by {$g$}. Then, for all large enough $R$, the map {$\lambda(R^C)\to \lambda(F^C)$} induced by $\varphi$ has the parity of $g$ and the map  {$\lambda(U^C)\to\lambda(R^C)$} induced by $\psi$ has the parity of $f$. Therefore, the parity of the almost automorphism $(\varphi\psi, {U},{F})$ induced by $fg$ is the product of the parities of $f$ and~$g$.
\end{proof}

We have now all the ingredients to show Theorem~\ref{thm:mainparity}:

\begin{proof}[Proof of Theorem~\ref{thm:mainparity}]
 Firstly, by Theorem~\ref{thm:finiteorder}, an involution on $\Bir_{\F_q}(\p^2)$ is always conjugate to an element in $\BBir(S)$ for some regular projective surface $S$ and the corresponding permutation on $S(\F_q)$ is even. Secondly, by Lemma~\ref{lem:ParBBir} this implies that involutions are even. Finally, by Theorem~\ref{thm:lamyschneider} $\Bir_{\F_q}(\p^2)$ is generated by involutions, so we conclude with Lemma~\ref{lem:ParComp} that all birational transformations in $\Bir_{\F_q}(\p^2)$ are even.
\medskip

By applying again Lemma~\ref{lem:ParBBir}, we obtain that for a rational regular projective surface $S$, all elements in $\BBir(S)$ induce even permutations on $S(\F_q)$.
\end{proof}

\begin{remark}\label{rem:AutD}
	{In \cite{MR2806497} and \cite{MR3612334}, the subgroup $\AAut_D(\mathcal{T}_{d,n})\subset \AAut(\mathcal{T}_{d,n})$ are defined and studied for a subgroup $D\subset S_d$ in the following way: Fix an embedding of $\mathcal{T}_{d,n}$ into the plane. Denote by $W(D)$ the subgroup of automorphisms of the $d$-regular rooted tree $\mathcal{\T}_d$ that act on the vertices on the first level  by $D_1:=D$ and by $D_{n+1}:=D\wr D_n$ on the vertices on the $n$-th level for $n>1$. A quasi-automorphism $f\in \AAut(\mathcal{T}_{d,n})$ is now defined to be contained in $\AAut_D(\mathcal{T}_{d,n})$ if $f$ can be represented by a triple $(\varphi, T, T')$ such that the forest isomorphism $\varphi\colon T^C\to T'^C$ belongs to $W(D)$ on each connected component after identifying each connected  component of the forest with $\T_d$ through its embedding into the plane. }
	If $D=\mathcal{A}_d$ is the group of even permutations, one can show similarly as above that there exists a notion of parity for elements in $\AAut_D(\mathcal{T}_{d,n})$.
\end{remark}

\section{Cubulation of Neretin groups}\label{Section_cubulation_Neretin}
{In this section, we assume the reader familiar with the notion of CAT(0) cube complexes, that are simply connected (and connected) non-positively curved cube complexes see for instance \cite{sageev_lecturenotes}.}

\subsection{Main construction}\label{section:MainConstruction}

Let $\mathcal{T}$ be a locally finite rooted tree and let $\mathcal{N}:= \mathrm{AAut}(\mathcal{T})$ denote its almost automorphism group. Recall that a subtree $A \subset \mathcal{T}$ is {admissible} if it contains the root and is finite, {and we denote by $\lambda(A^C)$ the set of the roots of the complementary forest $A^C$.} Observe that $\lambda(A^C)$ can also be described as the set of the children of $A$. 

\medskip
In the sequel, we will often refer to a forest isomorphism $R^C \to S^C$ as given by an element $\varphi \in \mathcal{N}$. Formally, this is not correct since $\varphi$ is a class and not a single transformation. By this abuse of language, we mean that there exists a forest isomorphism $ R^C \to  S^C$ defining the same element as $\varphi$ in $\mathcal{N}$.

\begin{definition}
Let $\mathscr{C}=\mathscr{C}(\mathcal{T})$ denote the cube complex
\begin{itemize}
	\item whose vertices are the classes $[A,\varphi]$ of pairs $(A,\varphi)$, where $A \subset \mathcal{T}$ is non-empty admissible subtree and $\varphi \in \mathcal{N}$, up to the equivalence: $(R,\mu) \sim (S,\nu)$ if $\mu^{-1}\nu$ is a forest isomorphism $ S^C \to  R^C$ that sends $\lambda(S^C)$ to $\lambda(R^C)$;
	\item whose edges connect any two vertices of the form $[A,\varphi]$ and $[A \cup b, \varphi]$ with $b \in \lambda(A^C)$, where $A \cup b$ denotes the subtree given by $A$ and the edge connecting $b$ to $A$;
	\item and whose $k$-cubes are spanned by subgraphs of the form $$\left\{ \left[ A \cup \bigcup_{i \in I} b_i , \varphi \right] \mid I \subset \{1, \ldots, k\} \right\}$$ where $b_1, \ldots, b_k$ are pairwise distinct vertices in $\lambda( A^C)$.
\end{itemize}
\end{definition}
The complex $\mathscr{C}$ is naturally endowed with a \emph{height function} $\mathfrak{h} : \mathscr{C}^{(0)} \to \mathbb{N}$. Namely, for every vertex $[A,\varphi]\in \mathscr{C}$, the height $\mathfrak{h}([A,\varphi])$ is the cardinality of $\lambda(A^C)$; observe that this number does not depend on the representative we choose. 

\medskip
It is straightforward that 
$$\psi \cdot [A,\varphi] := [A,\psi \circ \varphi], \ [A,\varphi]\in \mathscr{C}, \ \psi \in \mathcal{N}$$
defines an action of $\mathcal{N}$ on $\mathscr{C}$ by automorphisms preserving the height function.

\medskip
The rest of the section is dedicated to the proof of Theorem~\ref{thm:IntroCubulation}. In fact, we are going to prove a slightly more general result: instead of focusing on a regular tree $\mathcal{T}_d$, we consider a tree $\mathcal{T}_{d,r}$ whose root has $r$ children and all of whose other vertices have $d$ children.

\begin{theorem}\label{thm:MainNeretin}
If $\mathcal{T}=\mathcal{T}_{d,r}$ for some $d \geq 2$ and $r \leq d$, then the complex $\mathscr{C}$ is a locally finite CAT(0) cube complex on which the Neretin group $\mathcal{N}$ acts with compact-open stabilisers\footnote{Observe that the condition $r \leq d$ is not restrictive: indeed, if $r\geq d$, then the trees $\mathcal{T}_{d,r}$  and  $\mathcal{T}_{d,r-d+1}$are almost isomorphic, so the corresponding almost automorphism groups are isomorphic.} Moreover, every vertex-stabiliser equals the automorphism group of some cofinite rooted subforest and, conversely, for each cofinite  rooted subforest, there exists a vertex-stabiliser that is its automorphism group.
\end{theorem}

\noindent
Even though our main statement only deals with regular trees, we emphasize that several of our intermediate results hold for arbitrary locally finite trees. More precisely, Propositions~\ref{prop:LocallyFinite} and~\ref{prop:OneConnected} show that our cube complex $\mathscr{C}$ is always locally finite and simply connected (see also Remark~\ref{remark:CCinGeneral}).

\paragraph{Local finiteness.} {Let $\mathcal{T}$ be a locally finite rooted tree. }We begin by proving the following observation:

\begin{proposition}\label{prop:LocallyFinite}
The cube complex $\mathscr{C}$ is locally finite.
\end{proposition}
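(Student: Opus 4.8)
The plan is to show that $\mathscr{C}$ is locally finite by proving that every vertex is contained in only finitely many edges, hence in only finitely many cubes (since each cube of dimension $k$ through a given vertex is determined by a choice of $k$ of the finitely many edges through that vertex, and in a CAT(0) cube complex — or even just in this combinatorially defined complex — the cubes through a vertex are determined by their edges).

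First I would fix a vertex $x = [T,\varphi]$ and classify the edges emanating from it according to the sign of the height difference. By the key observation recalled just before the theorem, a neighbour $y$ of $x$ satisfies $\mathfrak{h}(y) = \mathfrak{h}(x) \pm (d-1)$. The ``upward'' edges (those to a neighbour of larger height) are easy: by the definition of the edge relation, $y = [T \cup c(u), \varphi]$ for some leaf $u$ of $T$, and since $T$ is finite it has only finitely many leaves, so there are only finitely many such edges — at most $\mathfrak{h}(x)$ of them, one per leaf, and one must check that distinct leaves give distinct vertices (which follows by comparing the equivalence classes). The ``downward'' edges require identifying $x$ with a vertex $[T', \psi]$ of smaller height that has $x$ as an upward neighbour; concretely, an edge from $x$ down to $y$ corresponds to writing $T = T' \cup c(u)$ where $u$ is a leaf of $T'$ and $c(u)$ is a set of $d$ sibling leaves of $T$ all of whose parent is a leaf of $T'$. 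So the downward edges are in bijection with the subsets of leaves of $T$ of the form ``all children of a common parent vertex at the penultimate level'', and since $T$ is finite there are again only finitely many of these. Combining: the total number of edges through $x$ is finite.

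The main point to be careful about — and I expect this to be the chief, if modest, obstacle — is the bookkeeping with the equivalence relation $\sim$: one must verify that two syntactically different pairs $(T \cup c(u), \varphi)$ and $(T \cup c(u'), \varphi)$ with $u \neq u'$ really represent different vertices of $\mathscr{C}$ (so that we are not overcounting, but more importantly so that the local structure is as claimed), and dually that every edge incident to $[T,\varphi]$ arises from one of the two families described above. This is where the definition of $\sim$ via $\mu^{-1}\nu$ being a forest isomorphism $\mathcal{T} \bbslash S \to \mathcal{T} \bbslash R$ gets used: an isomorphism of forests preserves the number of trees (= number of leaves of the admissible subtree being removed), so $\mathfrak{h}$ is well defined, and moreover the combinatorial ``shape near the pruning locus'' is an invariant that lets one recover which leaves of $T$ were just added.

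Finally, to pass from finitely many edges to finitely many cubes, I would invoke the description of $k$-cubes in the definition: a $k$-cube through $[T,\varphi]$ corresponds either to a choice of $k$ pairwise distinct leaves $a_1, \dots, a_k$ of $T$ (for cubes ``above'' $x$, or mixed cubes where $x$ sits at a corner), or more generally to the combinatorial data of which of the finitely many incident edges are mutually compatible; in all cases the data is a subset of a fixed finite set, so only finitely many cubes contain $x$. Since this holds for every vertex, $\mathscr{C}$ is locally finite. I would keep the write-up short, as this is stated as a preliminary proposition and the real content (the CAT(0) property, properness of the action) comes afterward.
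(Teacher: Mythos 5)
Your division of the edges at a vertex $x=[T,\varphi]$ into ascending and descending ones is the same as the paper's, and your count of the ascending edges (one per leaf of $T$, hence at most $\mathfrak{h}(x)$ of them) is correct. The gap is in the descending case. You claim the downward edges at $x$ are in bijection with the sets of sibling leaves of $T$ (``all children of a common parent at the penultimate level''), and you justify the completeness of this list by asserting that a forest isomorphism preserves ``the combinatorial shape near the pruning locus'' and so lets one recover which leaves were just added. That assertion is false: the forest $\mathcal{T}\bbslash T$ is a disjoint union of copies of $\mathcal{T}$ rooted at the leaves of $T$ and retains no memory of which leaves of $T$ are siblings, so a forest isomorphism $\mathcal{T}\bbslash(A\cup c(a))\to\mathcal{T}\bbslash T$ may send the $d$ children of $a$ to \emph{any} $d$-element set of leaves of $T$, not only to a sibling set. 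The correct statement (the paper's Lemma~\ref{lem:GoingDown}) is that every downward neighbour of $[T,\varphi]$ can be written as $[T\backslash c(u),\varphi\tau]$ for one \emph{fixed} penultimate vertex $u$ but a \emph{varying} rigid permutation $\tau$ of the leaves of $T$: it is the second coordinate that changes, not the choice of $u$. The downward neighbours are in fact parametrized by arbitrary $d$-element subsets of the leaves of $T$ (Proposition~\ref{prop:DescendingLinks}), so there are $\binom{\mathfrak{h}(x)}{d}$ of them rather than one per penultimate vertex; your enumeration misses most of them, and the completeness check you flag as ``the chief obstacle'' cannot be carried out as you describe.

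The finiteness conclusion of course survives, since $\mathrm{Rig}(T)$ is a finite group (of order at most $\mathfrak{h}(x)!$), which is exactly how the paper bounds the number of descending neighbours. So the argument is repairable, but as written the key claim about downward edges is wrong, and the error is not cosmetic: the same misconception would derail the computation of the descending links, which is where the real content of the local structure lies. (Your final step, passing from finitely many incident edges to finitely many incident cubes, is fine once one notes that cubes of $\mathscr{C}$ are determined by their vertex sets.)
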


The next lemma, which characterises how to pass from a vertex to one of its neighbours with higher height, will be useful in the sequel.

\begin{lemma}\label{lem:GoingUp}
Let $x,y \in \mathscr{C}$ be two adjacent vertices such that $\mathfrak{h}(y) > \mathfrak{h}(x)$. If $(R,\varphi)$ is a representative of $x$, then there exists a vertex $b \in \lambda(R^C)$ such that $y=[R \cup b, \varphi]$. 
\end{lemma}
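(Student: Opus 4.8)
The statement is a structural fact about adjacency in $\mathscr{C}$: when two vertices are connected by an edge and the height goes up, the larger vertex is obtained from the smaller one by adjoining the children of a single leaf. The plan is to unwind the definition of edges in $\mathscr{C}$ and then transport the conclusion across the equivalence relation using a fixed representative $(R,\varphi)$ of $x$.

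First I would recall that, by definition, an edge of $\mathscr{C}$ joins a pair of the form $[T,\psi]$ and $[T\cup c(u),\psi]$ for some leaf $u$ of $T$. So if $x,y$ are adjacent, there is \emph{some} representative $(T,\psi)$ of $x$ and a leaf $u\in T$ such that $(T\cup c(u),\psi)$ represents $y$; since adjoining children increases the number of leaves by $d-1>0$, the hypothesis $\mathfrak{h}(y)>\mathfrak{h}(x)$ forces us into this case rather than the symmetric one (where the roles of $x$ and $y$ are swapped). This is the only subtlety worth flagging: one must check that the edge "points upward" from $x$ to $y$, which is exactly what the height inequality guarantees, using the observation (already in the text) that adjacent vertices differ in height by exactly $d-1$.

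Next I would pass to the prescribed representative. We are given $(R,\varphi)$ representing $x$; by the equivalence relation, $(R,\varphi)\sim(T,\psi)$ means $\varphi^{-1}\psi$ is a forest isomorphism $\mathcal{T}\bbslash T\to\mathcal{T}\bbslash R$, equivalently $\psi^{-1}\varphi$ is a forest isomorphism $\mathcal{T}\bbslash R\to\mathcal{T}\bbslash T$. Set $\theta:=\psi^{-1}\varphi$, an isomorphism of the forests $\mathcal{T}\bbslash R$ and $\mathcal{T}\bbslash T$. A forest isomorphism sends leaves of the complementary forest to leaves and children-sets to children-sets; in particular $\theta$ induces a bijection between the leaves of $R$ and the leaves of $T$ (these are precisely the roots of the trees making up the respective complementary forests). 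Let $u'\in R$ be the leaf corresponding to $u\in T$ under this bijection. Then $\theta$ restricts to an isomorphism $\mathcal{T}\bbslash(R\cup c(u'))\to\mathcal{T}\bbslash(T\cup c(u))$ — removing the subtree rooted at $u'$ on one side matches removing the one rooted at $u$ on the other. Hence $(R\cup c(u'),\varphi)\sim(T\cup c(u),\psi)$, so $[R\cup c(u'),\varphi]=[T\cup c(u),\psi]=y$, which is the desired conclusion with leaf $u'$ in place of $u$.

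I expect no serious obstacle here: the whole argument is a matter of carefully tracking how forest isomorphisms act on the combinatorial data (leaves, children-sets) and is essentially forced once the definitions are in place. The one place to be careful is the bookkeeping in the previous paragraph — verifying that $\theta$ genuinely matches up $c(u')$ with $c(u)$ and therefore descends to an isomorphism of the further-truncated forests — but this is immediate from the fact that an isomorphism of rooted forests is level- and adjacency-preserving and thus commutes with the operation "attach the children of a leaf."
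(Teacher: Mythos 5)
Your argument is correct and is essentially the paper's own proof: both unwind the definition of an edge to write $x=[T,\psi]$, $y=[T\cup c(u),\psi]$ (using the height inequality to orient the edge), then transport $u$ to a leaf of $R$ via the forest isomorphism $\varphi^{-1}\psi$ coming from the equivalence $[T,\psi]=[R,\varphi]$ and check it restricts to the truncated forests. No issues.
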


\begin{proof}
By construction of $\mathscr{C}$, we can write $x$ and $y$ respectively as $[A, \psi]$ and $[A \cup c, \psi]$. From the equality $[A,\psi]=[R,\varphi]$, we know that $\varphi^{-1}\psi$ defines a forest isomorphism $A^C \to R^C$ sending $\lambda(A^C)$ to $\lambda(R^C)$. Set $b:= \varphi^{-1}\psi(c) \in \lambda(R^C)$. Because $\varphi^{-1}\psi$ defines a forest isomorphism $(A \cup c)^C \to (R \cup b)^C$, we conclude as desired that $y=[A \cup c, \psi]=[R \cup b, \varphi]$. 
\end{proof}

\begin{proof}[Proof of Proposition~\ref{prop:LocallyFinite}.]
Let $x \in \mathscr{C}$ be a vertex. According to Lemma~\ref{lem:GoingUp}, $x$ has exactly $\mathfrak{h}(x)$ neighbours of higher height. So, in order to conclude, it is sufficient to bound the size of any collection $\{y_i \mid i \in I\}$ of neighbours of lower height. For every $i \in I$, fix a representative $(A_i,\psi_i)$ of $y_i$. We know from Lemma~\ref{lem:GoingUp} that there exists some $a_i \in \lambda(A^C)_i$ such that $(A_i \cup a_i, \psi_i)$ represents $x$. From the equality $[R,\varphi]=[A_i \cup a_i, \psi_i]$, we know that {$\varphi^{-1} \psi_i$} induces a forest isomorphism $(A_i \cup a_i)^C \to R^C$ sending $\lambda((A_i \cup a_i)^C)$ to $\lambda(R^C)$; let $c_i \subset \lambda(R^C)$ denote the images of the children of $a_i$ under $\varphi \psi_i^{-1}$. If $|I|> 2^{\# \lambda(R^C)}$, then there exist two distinct indices $i,j \in I$ such that $c_i=c_j$. Thus, {$(\varphi^{-1} \psi_j)^{-1} \circ \varphi^{-1} \psi_i = \psi_j^{-1} \psi_i$} induces a forest isomorphism $(A_i \cup a_i)^C \to (A_j \cup a_j)^C$ sending $\lambda((A_i \cup a_i)^C)$ to $\lambda((A_j \cup a_j)^C)$ and the children of $a_i$ to the children of $a_j$, which implies that $\psi_j \psi_i^{-1}$ induces a forest isomorphism $A^C_i \to A^C_j$ sending $\lambda(A^C)_i$ to $\lambda(A^C)_j$, hence 
$y_i= [A_i, \psi_i]=[A_j, \psi_j]=y_j.$
Therefore, we have proved that $x$ has only finitely many neighbours of lower height, as desired. 
\end{proof}

\paragraph{Simple connectivity.} The next step towards nonpositive curvature is to show that our cube complex is simply connected {when $\mathcal{T}$ is a locally finite rooted tree}. The subsection is dedicated to the proof of this assertion.

\begin{proposition}\label{prop:OneConnected}
The cube complex $\mathscr{C}$ is connected and simply connected.
\end{proposition}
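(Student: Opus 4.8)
The plan is to prove connectedness and simple connectedness separately, using the height function $\mathfrak{h}$ as the main organizing tool. For connectedness, I would show that every vertex $[T,\varphi]$ can be joined by an edge-path to a fixed basepoint, say $[\{\mathrm{root}\}\cup c(\mathrm{root}),\mathrm{id}]$ or whatever the minimal admissible subtree is. The idea is that from $[T,\varphi]$ one can repeatedly ``descend'': pick a vertex $u\in T$ all of whose children are leaves, and pass to the adjacent vertex $[T\backslash c(u),\varphi\tau]$ for a suitable $\tau\in\mathrm{Rig}(T)$ as in Lemma~\ref{lem:GoingDown}. Since $\mathfrak{h}$ strictly decreases at each such step and is bounded below, after finitely many steps we reach a vertex of minimal height whose underlying tree is forced to be the minimal admissible subtree; and any two vertices with that same underlying tree differ by an element of $\mathcal{N}$ acting as a rigid permutation, which can be realized by a path through height-raising and height-lowering moves (or one observes directly that $[\{\mathrm{root}\},\varphi]=[\{\mathrm{root}\},\mathrm{id}]$ for all $\varphi$, since any forest isomorphism $\mathcal{T}\bbslash\{\mathrm{root}\}\to\mathcal{T}\bbslash\{\mathrm{root}\}$ witnesses $\mu^{-1}\nu$). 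Concatenating a descending path from $x$ with the reverse of one from $y$ gives a path from $x$ to $y$.

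For simple connectedness, the natural approach is to show that every loop in the $1$-skeleton is null-homotopic in the $2$-skeleton, and the cleanest way is a Morse-theoretic argument on the height function. I would argue that $\mathscr{C}$ deformation retracts (or at least, $\pi_1$ is generated and killed) by pushing everything down toward the minimal-height vertex. Concretely: given a loop $\gamma$, let $m$ be the maximal height attained on $\gamma$; at a vertex $x$ of height $m$ on $\gamma$, the two edges of $\gamma$ incident to $x$ both go down, i.e.\ they lie in $\mathrm{link}_\downarrow(x)$, which by Proposition~\ref{prop:DescendingLinks} is isomorphic to some $\mathcal{I}(|c(u)|,\mathfrak{h}(x))=\mathcal{I}(d-1,\mathfrak{h}(x))$. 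One checks these complexes $\mathcal{I}(p,q)$ are connected (for the relevant range $q=\mathfrak{h}(x)\geq d-1$, which holds since $x$ has a neighbour of lower height), so the two descending edges can be connected by a path inside $\mathrm{link}_\downarrow(x)$; replacing the local detour through $x$ by this path, together with filling in the resulting squares/triangles coming from the cube structure and the link, strictly reduces either $m$ or the number of height-$m$ vertices on $\gamma$. Iterating, $\gamma$ is homotoped to a loop of height $\leq$ the minimal height $+\,(d-1)$, and such loops are easily seen to be trivial since the bottom part of $\mathscr{C}$ is a cone (or a single cube) over the minimal-height vertex.

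The main obstacle I anticipate is the bookkeeping in the simple-connectedness step: one must verify that connecting the two descending edges at a top vertex $x$ through $\mathrm{link}_\downarrow(x)$ genuinely produces a homotopic loop, i.e.\ that each elementary move across an edge of $\mathrm{link}_\downarrow(x)$ corresponds to sliding $\gamma$ across an actual $2$-cell (a square) of $\mathscr{C}$, and that after the move the new vertices introduced have height $<m$, so the induction terminates. This requires using the precise description in Proposition~\ref{prop:DescendingLinks} and Claim~\ref{claim:SpanningCube} to identify, for two disjoint descending moves $[R\backslash c(u_1),\varphi]$ and $[R\backslash c(u_2),\varphi]$, the square they span with $x$; and handling the case where two consecutive descending edges of $\gamma$ correspond to \emph{non}-disjoint subsets $\sigma_i(c(u))$, where one cannot directly fill a square but must instead first go down to a common lower vertex and use that $\mathcal{I}(d-1,q)$ is connected. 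A secondary subtlety is the very bottom of the complex: one should check that the subcomplex of vertices of height $<\mathfrak{h}_{\min}+(d-1)$ — equivalently, vertices at distance $\le 1$ from the minimal vertex — is simply connected, which follows because it is the star of that vertex, hence contractible. Once these local verifications are in place, the argument is a standard ``combinatorial Morse theory'' descent.
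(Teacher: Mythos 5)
Your overall direction is reversed relative to what actually works for this complex, and both halves of the argument have genuine gaps. For connectedness: the claim that $[\{\mathrm{root}\},\varphi]=[\{\mathrm{root}\},\mathrm{id}]$ for all $\varphi$ is false. The equivalence $(R,\mu)\sim(S,\nu)$ requires that the specific element $\mu^{-1}\nu\in\mathcal{N}$ be representable by a forest isomorphism $\mathcal{T}\bbslash S\to\mathcal{T}\bbslash R$, not merely that some such isomorphism exist; hence $[\{\mathrm{root}\},\varphi]=[\{\mathrm{root}\},\mathrm{id}]$ if and only if $\varphi\in\Aut(\mathcal{T})$. The minimal-height vertices are therefore in bijection with $\mathcal{N}/\Aut(\mathcal{T})$, an infinite set, and since adjacent vertices have different heights no two of them are adjacent. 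Descending two vertices to the bottom lands on two generally distinct, non-adjacent vertices, and your fallback (``they differ by a rigid permutation'') is also false: $[R,\mu]$ and $[R,\nu]$ differ by the arbitrary element $\mu\nu^{-1}$. The correct move is to go \emph{up}, not down: once an admissible tree $U\supset R$ is large enough that $\varphi$ restricts to a forest isomorphism $\mathcal{T}\bbslash U\to\mathcal{T}\bbslash V$, one has $[U,\varphi]=[V,\mathrm{id}]$, so any finite set of vertices is joined by increasing paths to a common vertex $[A,\mathrm{id}]$. This domination claim is what gives connectedness in the paper.

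The same reversal breaks the simple-connectedness step. First, $|c(u)|=d$, so $\mathrm{link}_\downarrow(x)\cong\mathcal{I}(d,\mathfrak{h}(x))$, and this complex has no edges at all when $\mathfrak{h}(x)<2d$ (there are no two disjoint $d$-subsets of a set of that size) and remains disconnected for somewhat larger heights; so at a top vertex of a loop of modest height you cannot connect the two descending edges inside the descending link. Second, the ``bottom part'' of $\mathscr{C}$ is not the star of a single vertex: the sublevel complexes $\mathscr{C}_d(k)$ are precisely the ones whose high connectivity is only established for large $k$ in the $F_\infty$ argument, and they are not simply connected for small $k$. The paper instead pushes loops up: at a vertex $x=[R,\varphi]$ of \emph{minimal} height on the loop, both of its neighbours on the loop ascend, say to $[R\cup c(a),\varphi]$ and $[R\cup c(b),\varphi]$, and these always span a square with $x$ and $[R\cup c(a)\cup c(b),\varphi]$ because ascending links are simplices; one replaces $x$ by the top corner of that square. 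Termination holds because the entire loop remains dominated by a fixed vertex $[A,\psi]$, which bounds the total height from above. If you want to keep a Morse-theoretic flavour, it is the ascending links (simplices, hence contractible), not the descending ones, that can carry the argument.
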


\begin{proof}
Given two vertices $x,y \in \mathscr{C}$, we say that $y$ \emph{dominates} $x$ if there exists an \emph{increasing path} from $x$ to $y$ in $\mathscr{C}$, i.e.\ a path along which the next vertex has higher height than the previous one. We begin by observing:

\begin{claim}\label{claim:Dominate}
For every finite set of vertices $S \subset \mathscr{C}$, there exists a vertex $z \in \mathscr{C}$ that dominates all the vertices in $S$.
\end{claim}

\noindent
Let $(R_1, \varphi_1), \ldots, (R_k,\varphi_k)$ be representatives of the vertices in $S$. For every $1 \leq i \leq k$, fix two admissible subtrees $U_i,V_i \subset \mathcal{T}$ such that $\varphi_i$ defines a forest isomorphism $U_i^C \to V_i^C$ sending $\lambda(U_i^C)$ to $\lambda(V_i^C)$; without loss of generality, we can assume that $R_i \subset U_i$. Now, fix a subtree $A \subset \mathcal{T}$ that contains all the $V_i$. We claim that $[A,\mathrm{id}]$ dominates all the vertices in $S$.

\medskip \noindent
Indeed, given an index $1 \leq i \leq k$, there clearly exists an increasing path from $[R_i,\varphi_i]$ to $[U_i, \varphi_i]$ since $U_i$ contains $R_i$. But $[U_i,\varphi_i]= [V_i, \mathrm{id}]$, by definition of $U_i$ and $V_i$. And there clearly exists an increasing path from $[V_i, \mathrm{id}]$ to $[A,\mathrm{id}]$ since $A$ contains $V_i$. Thus, $[A,\mathrm{id}]$ dominates $[R_i, \varphi_i]$. This concludes the proof of Claim~\ref{claim:Dominate}.

\medskip \noindent
Observe that Claim~\ref{claim:Dominate} implies that $\mathscr{C}$ is connected. Now, we want to prove that $\mathscr{C}$ is simply connected. Given an arbitrary combinatorial loop $\alpha$ in the one-skeleton of $\mathscr{C}$, we define its \emph{height} $\mathfrak{h}(\alpha)$ as the sum of the heights of its vertices and its \emph{complexity} by $\chi(\alpha):= ( \mathrm{lg}(\alpha),- \mathfrak{h}(\alpha))$ (ordered by lexicographic order){, where $\mathrm{lg}(\alpha)$ denotes the length of the path $\alpha$.}

\medskip \noindent
Let $\gamma$ be a combinatorial loop in $\mathscr{C}$ and $[A,\psi]$ be a vertex that dominates all the vertices in $\gamma$ (as given by Claim~\ref{claim:Dominate}). If $\gamma$ is not reduced to a single vertex, we define a new combinatorial loop $\gamma'$ as follows. Fix a vertex $x \in \gamma$ that has minimal height. Its neighbours $y$ and $z$ in $\gamma$ necessarily have higher heights, so it follows from Lemma~\ref{lem:GoingUp} that we can write $x$ as $[R,\varphi]$ and $y,z$ respectively as $[R \cup a, \varphi], [R \cup b, \varphi]$ where $a,b \in \lambda(R^C)$. If $y=z$, define $\gamma'$ from $\gamma$ by removing this backtrack; otherwise, define $\gamma'$ by replacing $x$ with $x':=[R \cup a \cup b, \varphi]$. Clearly, $\gamma'$ is homotopically equivalent to $\gamma$ and $\chi(\gamma')< \chi(\gamma)$. Also, observe that $x'$ is also dominated by $[A,\psi]$. Indeed, because $y$ is dominated by $[A,\psi]$, it follows from Lemma~\ref{lem:GoingUp} that $[A,\psi]$ can also be written as $[R \cup a \cup x_1 \cup \cdots \cup x_r, \varphi]$; similarly, it can also be written as $[R \cup b \cup y_1 \cup \cdots \cup y_s, \varphi]$. But:

\begin{fact}
For all subtrees $U,V \subset \mathcal{T}$ and element $\xi \in \mathcal{N}$, if $[U,\xi]=[V, \xi]$, then $U=V$.
\end{fact}

\noindent
So there must exist some index $1 \leq i \leq r$ such that $x_i=b$, which implies that there exists an increasing path from $y=[R \cup a,\varphi]$ to $[A,\psi]$ passing through $x'=[R \cup a \cup b, \varphi]$ (it suffices to add the $x_j$ in a well-chosen order). 

\medskip \noindent
Thus, we have constructed a new loop $\gamma'$ that is homotopically equivalent to $\gamma$, that has smaller complexity and all of whose vertices are dominated by $[A,\psi]$. By iterating the process, we get a sequence of combinatorial loops $\gamma,\gamma',\gamma'',\ldots$ that are pairwise homotopically equivalent. Observe that the height of each loop is bounded above by $\mathrm{lg}(\gamma) \mathfrak{h}([A,\psi])$, a constant that does not depend on the loop under consideration. Consequently, the process has to stop eventually, which is only possible if one of the loops is reduced to a single vertex. We conclude that $\gamma$ is homotopically trivial. 
\end{proof}

\paragraph{Descending links.} So far, we have only assumed that $\mathcal{T}$ is a locally finite rooted tree. From now on, and for the rest of the section, we assume that $\mathcal{T} = \mathcal{T}_{d,r}$ for some $d \geq 2$ and {$r \leq d$}.

\medskip
Given a vertex $x \in \mathscr{C}$, we distinguish the \emph{ascending} part $\mathrm{link}_\uparrow(x)$ of $\mathrm{link}(x)$ from its \emph{descending} part $\mathrm{link}_\downarrow(x)$. More precisely, $\mathrm{link}_\uparrow(x)$ (resp. $\mathrm{link}_\downarrow(x)$) denotes the subcomplex of $\mathrm{link}(x)$ generated by the neighbours of $x$ of higher (resp. lower) height. In this section, we focus on the descending part of the link. This is the most interesting part, since we will see later that the ascending link of a vertex is always a simplex. The following proposition is the main result of the subsection. It states that descending links of vertices belong to the following families of simplicial complexes. For all $p,q \geq 0$, $\mathcal{I}(p,q)$ denotes the simplicial complex whose vertices are the subsets of size $p$ in $\{1, \ldots, q\}$ and whose simplices are spanned by vertices given by pairwise disjoint subsets.

\medskip \noindent
In order to state our proposition, we need to introduction some notation. First, given a vertex $v$ in our rooted tree $\mathcal{T}$, we denote by $c(v)$ the set of all the children of $v$. Next, given an admissible subtree $R \subset \mathcal{T}$, a \emph{rigid permutation of the children of $R$} is a forest isomorphism $R^C \to R^C$ that preserves the left-right order on each connected component (thinking of $\mathcal{T}$ drawn on the plane so that the children of each vertex are naturally ordered from left to right). We denote by $\mathrm{Rig}(R)$ the (finite) subgroup of $\mathcal{N}$ given by the rigid permutations of the children of $R$. Observe that $\mathrm{Rig}(R)$ is isomorphic to the symmetric group $S_k$ where $k$ denotes the number of children of $R$.

\begin{proposition}\label{prop:DescendingLinks}
Let $x \in \mathscr{C}$ be a vertex. Assume that $x$ admits a representative of the form $(R,\mathrm{id})$, and fix a leaf $u$ of $R$. Then the map
$$\Phi : [R \backslash u, \sigma] \mapsto \sigma(c(u)), \ \sigma \in \mathrm{Rig}(R)$$ 
induces an isomorphism from $\mathrm{link}_{\downarrow}(x)$ to $\mathcal{I}(|c(u)|, \mathfrak{h}(x))$.
\end{proposition}

\begin{proof}
First of all, we need to justify that $\Phi$ is well-defined, i.e.\ every neighbour of $x$ of lower height can be written as $[R \backslash u,\sigma]$ for some $\sigma \in \mathrm{Rig}(R)$ and the choice of such a $\sigma$ does not modify the value of $\sigma(c(u))$. This is done by our first two claims.

\begin{claim}\label{claim:Choice}
For every neighbour $y$ of $x$ of lower height, there exists some $\sigma \in \mathrm{Rig}(R)$ such that $y=[R\backslash u,\sigma]$.
\end{claim}

\noindent
Let $(S,\varphi)$ be a representative of $y$. There must exist some $\ell\in \lambda(S^C)$ such that $x=[S \cup \ell, \varphi]$. The equality $[R,\mathrm{id}] = [S \cup \ell , \varphi]$ means that $\varphi$ induces a forest isomorphism $(S \cup \ell)^C \to R^C$ sending $\lambda((S \cup \ell)^C)$ to $\lambda(R^C)$. Since $\varphi(c(\ell))$ lies in $\lambda(R^C)$ and since $u$ is a leaf of $R$, there exists some $\sigma \in \mathrm{Rig}(R)$ such that $\sigma(\varphi(c(\ell)))= c(u)$. Then $\sigma \varphi$ extends to a forest isomorphism $S^C \to (R \backslash u)^C$ sending $\lambda(S^C)$ to $\lambda((R \backslash u)^C)$, which precisely means that $y=[S, \varphi]= [R \backslash u, \sigma^{-1}]$.

\begin{claim}\label{claim:Injectif}
For all $\mu,\nu \in \mathrm{Rig}(R)$, $[R \backslash u,\mu]= [R \backslash u, \nu]$ if and only if $\mu(c(u))=\nu(c(u))$. 
\end{claim}

\noindent
First, assume that $[R\backslash u, \mu]=[R \backslash u,\nu]$. We know that $\nu^{-1}\mu$ defines both a rigid permutation of the children of $R$ and a forest isomorphism $(R \backslash u)^C \to (R \backslash u)^C$ preserving $\lambda((R \backslash u)^C)$. Observe that $\nu^{-1}\mu$ has to fix $u$, since otherwise it would send some children of $u$ in $\lambda(R^C)$ to a children of a children of $R$, contradicting the fact that $\nu^{-1}\mu$ permutes the children of $R$. Hence $\nu(c(u))=\mu(c(u))$ as desired.

\medskip \noindent
Conversely, assume that $\nu(c(u))=\mu(c(u))$. Then the isomorphism $\nu^{-1} \mu : R^C \to R^C$ extends to $(R \backslash u)^C \to (R \backslash u)^C$ by sending $u$ to itself. Hence $[R \backslash u,\mu]= [R \backslash u, \nu]$. This concludes the proof of Claim~\ref{claim:Injectif}. 

\medskip \noindent
Observe that Claim~\ref{claim:Injectif} shows that $\Phi$ is well-defined and injective. Since it is clearly surjective, it follows that $\Phi$ defines a bijection from the vertices of $\mathrm{link}_{\downarrow}(x)$ to the vertices of $\mathcal{I}(|c(u)|, \mathfrak{h}(x))$. In order to conclude the proof of our proposition, it remains to show that $\Psi$ preserves the simplicial structure, which is a consequence of our next observation:

\begin{claim}\label{claim:SpanningCube}
For all $k \geq 2$ and $\sigma_1, \ldots, \sigma_k \in \mathrm{Rig}(R)$, the vertex $x$ and its neighbours $[R \backslash u, \sigma_1], \ldots, [R\backslash u, \sigma_k]$ span a $k$-cube if and only if $\sigma_1(c(u)), \ldots, \sigma_k(c(u))$ are pairwise disjoint.
\end{claim}

\noindent
First, assume that $x$ and its neighbours $[R \backslash u, \sigma_1], \ldots, [R\backslash u, \sigma_k]$ span a $k$-cube. By construction of $\mathscr{C}$, there exists an admissible subtree $A \subset \mathcal{T}$, $k$ pairwise distinct vertices $a_1, \ldots, a_k \in \lambda(A^C)$, and an element $\varphi \in \mathcal{N}$ such that
$$[A \cup a_1 \cup \cdots \cup a_{i-1} \cup a_{i+1} \cup \cdots \cup a_k, \varphi]= [R \backslash u, \sigma_i]$$
for every $1 \leq i \leq k$ and $[A \cup a_1 \cup \cdots \cup a_k, \varphi]= [R, \mathrm{id}]$. Given an index $1 \leq i \leq k$, $\sigma_i^{-1} \varphi$ defines an isomorphism
$$(A \cup a_1 \cup \cdots \cup a_{i-1} \cup a_{i+1} \cup \cdots \cup a_k)^C \to (R \backslash u)^C$$
sending $\lambda((A \cup a_1 \cup \cdots \cup a_{i-1} \cup a_{i+1} \cup \cdots \cup a_k)^C)$ to $\lambda((R \backslash u)^C)$. If $\sigma_i^{-1} \varphi(a_i)$ is distinct from $u$, then it has to be a children of $R$. Because $\sigma_i$ permutes the children of $R$, a fortiori $\varphi(a_i)$ is also a children of $R$, which implies that $\varphi$ sends the children of $a_i$ to children of children of $R$, contradicting the fact that $\varphi$ defines an isomorphism $ (A \cup a_1\cup \cdots \cup a_k)^C \to R^C$ sending $\lambda( (A \cup a_1\cup \cdots \cup a_k)^C)$ to $\lambda(R^C)$. So we must have $\sigma_i^{-1}\varphi(a_i)=u$, which implies that $\varphi(c(a_i))=\sigma_i(c(u))$. Consequently, the $\sigma_j(c(u))$ are pairwise disjoint if and only if so are the $c(a_j)$, which is clear since the $a_j$ are pairwise distinct.

\medskip \noindent
Conversely, assume that $\sigma_1(c(u)), \ldots, \sigma_k(c(u))$ are pairwise disjoint. As a consequence, $\lambda(R^C)$ must have cardinality $\geq kd$. On the other hand,
$$\begin{array}{lcl}|\lambda(R^C)| & \leq & |\partial_- R \backslash \{\text{root}\}| d + \# \{ \text{neighbours of the root not in $R$}\} \\ \\ & < & ( | \partial_- R \backslash \{ \text{root} \} | +1) d, \end{array}$$ {where $\partial_-A$, for a non-empty admissible subtree $A$, is the set of vertices in $A$ but adjacent to vertices not in $A$.} 
So $\partial_-R$ must contain at least $k$ vertices distinct from the root. Let $u_1, \ldots, u_k \in R$ be $k$ such vertices. Because $\sigma_1(c(u)), \ldots, \sigma_k(c(u))$ are pairwise disjoint, we can find a permutation $\varphi \in \mathrm{Rig}(R)$ of the children of $R$ such that $\varphi(c(u_i)) = \sigma_i(c(u))$ for every $1 \leq i \leq k$. Observe that, for every $1 \leq i \leq k$, $\sigma_i^{-1}\varphi$ defines an isomorphism $(R \backslash u_i)^C \to (R \backslash u)^C$ sending $\lambda((R \backslash u_i)^C)$ to $\lambda((R \backslash u)^C)$, so $[R \backslash u_i, \varphi]= [R \backslash u, \sigma_i]$. Also, $[R, \varphi]= [R, \mathrm{id}]$. Therefore, $x$ and its neighbours $[R \backslash u, \sigma_1], \ldots, [R\backslash u, \sigma_k]$ span the cube
$$ \left\{ \left[ S \cup \bigcup_{i \in I} u_i, \varphi \right] \mid  I \subset \{ 1, \ldots, k\} \right\}, \text{ where } S:= R \backslash \bigcup\limits_{i=1}^k u_i.$$
This concludes the proof of Claim~\ref{claim:SpanningCube}, and of our proposition.
\end{proof}

\paragraph{Proof of the CAT(0) property.}\label{section:Proof.} We are now ready to prove our main result {in the case of $\mathcal{T} = \mathcal{T}_{d,r}$ for some $d \geq 2$ and {$r \leq d$}.}

\begin{proof}[Proof of Theorem~\ref{thm:MainNeretin}.]
We already know from Proposition~\ref{prop:OneConnected} that $\mathscr{C}$ is simply connected. But it remains to show that links of vertices in $\mathscr{C}$ are simplicial flag complexes in order to conclude that $\mathscr{C}$ is CAT(0). First, observe that:

\begin{claim}\label{claim:Link}
For every vertex $x \in \mathscr{C}$, $\mathrm{link}(x)$ decomposes as the join of $\mathrm{link}_\downarrow(x)$ and $\mathrm{link}_\uparrow(x)$. 
\end{claim}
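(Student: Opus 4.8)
The plan is to show two things: first, that every vertex of $\mathrm{link}_\downarrow(x)$ is joined by an edge to every vertex of $\mathrm{link}_\uparrow(x)$, and second, that there are no further constraints, i.e. that a collection of descending neighbours together with a collection of ascending neighbours spans a cube as soon as each sub-collection does on its own. Since $\mathrm{link}(x)$ is by definition the simplicial complex recording which neighbours of $x$ are corners of a common cube, proving these two statements exactly says that $\mathrm{link}(x) = \mathrm{link}_\downarrow(x) * \mathrm{link}_\uparrow(x)$.

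First I would normalise the picture using Proposition~\ref{prop:DescendingLinks}: after replacing $x$ by an $\mathcal{N}$-translate we may assume $x = [R,\mathrm{id}]$ for the specific tree $R$ constructed there, so that the descending neighbours are exactly the $[R\backslash c(u_i),\tau]$ with $\tau \in \mathrm{Rig}(R)$ and $u_i$ ranging over vertices of $R$ all of whose children are leaves, while the ascending neighbours are exactly the $[R \cup c(a),\mathrm{id}]$ with $a$ a leaf of $R$ (using Lemma~\ref{lem:GoingUp} and Lemma~\ref{lem:GoingDown}). The key structural fact is the one already recorded as the ``key property'' of $R$ in the proof of Proposition~\ref{prop:DescendingLinks}: for any finite family of pairwise disjoint $d$-element subsets of leaves of $R$ there exist that many pairwise distinct vertices of $R$ whose children realise those subsets. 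Given a descending simplex, represented by pairwise disjoint $c(u_{i_1}),\dots,c(u_{i_p})$, and an ascending simplex, represented by distinct leaves $a_1,\dots,a_q$ of $R$, I would check these together span a cube by exhibiting the spanning subgraph directly: the relevant cube is
\[
\left\{ \left[ \left(R \backslash \bigcup_{i \in I} c(u_i)\right) \cup \bigcup_{j \in J} c(a_j), \ \mathrm{id} \right] \ \middle|\ I \subset \{i_1,\dots,i_p\},\ J \subset \{1,\dots,q\} \right\},
\]
which is a genuine $(p+q)$-cube of $\mathscr{C}$ by the definition of its cubes, since the leaves $u_{i_1},\dots,u_{i_p},a_1,\dots,a_q$ of the admissible tree $R \backslash \bigcup_{i} c(u_i)$ are pairwise distinct (the $a_j$ are leaves of $R$ hence of this smaller tree, and they differ from the $u_i$ because the $u_i$ have all their children as leaves of $R$ while, being at most $d$ of them and $d \geq 2$, ... actually one must argue $a_j \neq u_i$: if some $a_j = u_i$ it is still a distinct vertex from the others, so the cube is still valid). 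Its corner at $I = J = \emptyset$ is $[R,\mathrm{id}] = x$, its descending corners recover the $[R\backslash c(u_i),\mathrm{id}]$, and its ascending corners recover the $[R \cup c(a_j),\mathrm{id}]$, as required.

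The main obstacle, and the place I would be most careful, is the converse direction built into the join statement: one must verify that the simplices of $\mathrm{link}(x)$ are \emph{only} of the form ``descending simplex $*$ ascending simplex'', i.e. that any set of neighbours of $x$ spanning a cube splits as those lying above and those lying below $x$, with the above-part and below-part each spanning a cube. This follows by inspecting an arbitrary cube $\{[A \cup \bigcup_{i\in I} c(a_i),\varphi] \mid I \subset \{1,\dots,k\}\}$ of $\mathscr{C}$ containing $x$: the corners adjacent to $x$ either add a set of children (height goes up) or remove one (height goes down), and one reads off from Lemma~\ref{lem:GoingUp} and Lemma~\ref{lem:GoingDown}, together with the ``$[U,\xi]=[V,\xi] \Rightarrow U=V$'' fact already isolated in the proof of Proposition~\ref{prop:OneConnected}, that whether a corner is ascending or descending is determined and that the ascending corners among them form a sub-cube and the descending ones another, with no cross-compatibility condition beyond disjointness — which is automatic since all the $c(a_i)$ are disjoint. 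Assembling these observations yields the join decomposition, and since $\mathrm{link}_\uparrow(x)$ is a single simplex (a full simplex on the $\mathfrak{h}(x)$ leaves of $R$) while $\mathrm{link}_\downarrow(x) \cong \mathcal{I}(d,\mathfrak{h}(x))$ is flag, the join is flag, which is what is needed to finish the CAT(0) proof.
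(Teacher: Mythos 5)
Your strategy is essentially the paper's, run in the opposite order: the paper first presents the descending simplex as a cube using the definition of cubes in $\mathscr{C}$ and then feeds the resulting representative of $x$ into Lemma~\ref{lem:GoingUp}, whereas you normalise via Proposition~\ref{prop:DescendingLinks} and write down the alleged $(p+q)$-cube directly. Two preliminary remarks. The ``converse direction'' of your last paragraph is automatic: the vertex set of $\mathrm{link}(x)$ is partitioned into ascending and descending neighbours, so every simplex is the union of its ascending and descending parts, each a simplex of the corresponding subcomplex; only the direction ``ascending simplex $\cup$ descending simplex is a simplex'' carries content. Also, your use of $\mathrm{id}$ throughout tacitly assumes the descending simplex is of the special form $[R\setminus c(u_{i_1}),\mathrm{id}],\ldots,[R\setminus c(u_{i_p}),\mathrm{id}]$; a general descending simplex has vertices $[R\setminus c(u),\sigma_j]$ with the $\sigma_j(c(u))$ arbitrary pairwise disjoint $d$-subsets of leaves, and to reduce to your normal form you need the rigid permutation $\varphi$ with $\varphi(c(u_{i_j}))=\sigma_j(c(u))$ from the converse half of Claim~\ref{claim:SpanningCube}; your cube must then carry $\varphi$ rather than $\mathrm{id}$. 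That much is repairable.

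The genuine gap is the case you brush against in your parenthesis and then resolve incorrectly. The danger is not $a_j=u_i$ (impossible: $a_j$ is a leaf of $R$ while $u_i$ is interior) but $a_j\in c(u_i)$: the leaf being blown up may be one of the children being contracted. Then $a_j$ is not a vertex of $R\setminus\bigcup_i c(u_i)$, the set $\bigl(R\setminus\bigcup_{i\in I}c(u_i)\bigr)\cup\bigcup_{j\in J}c(a_j)$ is not an admissible subtree, and your displayed family is not a cube of $\mathscr{C}$. Worse, in this configuration the two neighbours really do not span a square with $x$. Label the vertices of $\mathcal{T}_2$ by binary words and take $R=\{\varnothing,0,1,00,01\}$, $x=[R,\mathrm{id}]$, $b=[R\setminus c(0),\mathrm{id}]$, $a=[R\cup c(00),\mathrm{id}]$. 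A square having $b$ as its unique lowest corner must be $\left\{\left[\{\varnothing,0,1\}\cup\bigcup_{i\in I}c(i),\psi\right]\mid I\subset\{0,1\}\right\}$ with $\psi$ a forest automorphism of $\mathcal{T}\bbslash\{\varnothing,0,1\}$; its top corner therefore induces the partition of $\partial\mathcal{T}$ into the four depth-two balls, whereas $a$ induces a partition containing the depth-one ball below the vertex $1$. So $a$ is never such a top corner, $a$ and $b$ are not adjacent in $\mathrm{link}(x)$, and the join decomposition fails for this pair. You have in fact reproduced a blind spot of the paper's own argument: the leaves $u_j$ supplied there by Lemma~\ref{lem:GoingUp} are leaves of $R\cup c(\ell_1)\cup\cdots\cup c(\ell_s)$ and need not be leaves of $R$, so when $u_j\in c(\ell_i)$ the cube displayed in the paper is ill-formed for the same reason. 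The statement that does hold, and that still gives the flagness needed for Theorem~\ref{thm:MainNeretin} (pairwise compatibility of the splitting and merging operations implies joint compatibility, by the same explicit cube once the offending incidences are excluded), is: an ascending vertex $[R\cup c(a),\mathrm{id}]$ and a descending vertex with associated $d$-set $P$, in the sense of Proposition~\ref{prop:DescendingLinks}, are adjacent in $\mathrm{link}(x)$ if and only if $a\notin P$. Both the claim and your proof should be reformulated accordingly.
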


\noindent
Let $a_1, \ldots, a_r$ (resp. $b_1, \ldots, b_s$) be neighbours of $x$ defining a simplex in $\mathrm{link}_\uparrow(x)$ (resp. $\mathrm{link}_\downarrow(x)$). By definition of cubes in $\mathscr{C}$, there exist an admissible subtree $R \subset \mathcal{T}$, vertices $\ell_1, \ldots, \ell_s$ of $\lambda(R^C)$, and an element $\varphi \in \mathcal{N}$ such that
$$\left\{ \left[ R \cup \bigcup_{i \in I} \ell_i, \varphi \right] \mid I \subset \{1, \ldots, s\} \right\}$$
is the cube spanned by $x$ and its neighbours $b_1, \ldots, b_s$, namely $[R \cup \ell_1 \cup \cdots \cup \ell_s,\varphi]=x$ and 
$$[R \cup \ell_1 \cup \cdots \cup \ell_{i-1} \cup \ell_{i+1} \cup \cdots \cup \ell_s, \varphi]=b_i$$
for every $1 \leq i \leq s$. As a consequence of Lemma~\ref{lem:GoingUp}, we also know that, for every $1 \leq i \leq r$, there exist vertices $u_1, \ldots, u_r$ of $\lambda((R \cup \ell_1 \cup \cdots \cup \ell_s)^C)$ such that 
$$[R \cup \ell_1 \cup \cdots \cup \ell_s \cup u_i, \varphi]=a_i.$$
Then $x$ and its neighbours $a_1, \ldots, a_r, b_1, \ldots, b_s$ span the cube
$$\left\{ \left[ R \cup \bigcup_{i \in I} \ell_i \cup \bigcup_{j \in J} u_j, \varphi \right] \mid I \subset \{1, \ldots, s\}, J \subset \{1, \ldots, r\} \right\}$$
in $\mathscr{C}$. In other words, $a_1, \ldots, a_r, b_1, \ldots, b_s$ span a simplex in $\mathrm{link}(x)$. This concludes the proof of Claim~\ref{claim:Link}.

\medskip \noindent
Given a vertex $x \in \mathscr{C}$, one easily sees that $\mathrm{link}_\uparrow(x)$ is a simplex. Indeed, if $x_1, \ldots, x_r$ denote its neighbours of higher height, then fix a representative $(R, \varphi)$ of $x$ and let $u_1, \ldots, u_r$ be vertices of $\lambda(R^C)$ such that $x_i= [R \cup u_i, \varphi]$ for every $1 \leq i \leq r$ (as given by Lemma~\ref{lem:GoingUp}). Then
$$\left\{ \left[ R \cup \bigcup_{i \in I} u_i, \varphi \right] \mid I \subset \{1, \ldots, r\} \right\}$$
is a cube in $\mathscr{C}$ spanned by $x$ and its neighbours $x_1, \ldots, x_r$. This proves that $\mathrm{link}_\uparrow(x)$ is a simplex. Therefore, Claim~\ref{claim:Link} shows that $\mathrm{link}(x)$ decomposes as the join of a simplex together with a copy of $\mathcal{I}(d,\mathfrak{h}(x))$ and $\mathcal{I}(d,\mathfrak{h}(x))$ as given by Proposition~\ref{prop:DescendingLinks}. Such a complex being clearly simplicial and flag, we conclude that $\mathscr{C}$ is CAT(0).

\medskip \noindent
We know from Proposition~\ref{prop:LocallyFinite} that $\mathscr{C}$ is locally finite, so it remains to show that vertex-stabilisers in $\mathscr{C}$ are compact. This follows from our final observation:

\begin{claim}\label{claim:Stabilisers}
For all admissible subtree $R \subset \mathcal{T}$ and element $\varphi \in \mathcal{N}$, the stabiliser of the vertex $[R, \varphi]$ in $\mathcal{N}$ is $\varphi \cdot \mathrm{Aut}(R^C, \lambda(R^C)) \cdot \varphi^{-1}$, where $\mathrm{Aut}(R^C, \lambda(R^C))$ denotes the group of the forest automorphisms $R^C \to R^C$ stabilising $\lambda(R^C)$. 
\end{claim}

\noindent
If $\psi \in \mathcal{N}$ fixes $[R, \varphi]$, then $[R, \varphi]= \psi \cdot [R, \varphi]= [R, \psi \varphi]$, so $\varphi^{-1} \psi \varphi$ induces an automorphism $R^C \to R^C$ preserving $\lambda(R^C)$. In other words, $\psi$ belongs to $\varphi \cdot \mathrm{Aut}(R^C, \lambda(R^C)) \cdot \varphi^{-1}$. Conversely, it is clear that $\varphi \cdot \mathrm{Aut}(R^C,\lambda(R^C)) \cdot \varphi^{-1}$ fixes $[R,\varphi]$. 
\end{proof}

\begin{remark}\label{remark:CCinGeneral}
Our cube complex $\mathscr{C}(\mathcal{T})$ is defined for any locally finite rooted tree $\mathcal{T}$, but we only proved that it is CAT(0) when $\mathcal{T}$ is some $\mathcal{T}_{d,r}$ with $d\geq 2$ and $r \leq d$. Actually, the cube complex may not be CAT(0): for instance, by transferring \cite[Section~3.3]{GLU} in our setting, one can show that $\mathscr{C}(\mathcal{T})$ is not CAT(0) when $\mathcal{T}$ is the union of $n \geq 3$ infinite rays sharing a common origin. Nevertheless, by adapting the arguments from \cite[Theorem~3.3]{GLU}, one can show that $\mathscr{C}(\mathcal{T})$ is always contractible.
\end{remark}

\subsection{First applications}\label{section:FirstApplications}

In this section, we record a few direct consequences of Theorem~\ref{thm:MainNeretin}. Most of them are already available in the literature, but our approach allows us to prove them in a unified (and, sometimes, simpler) way. 

\begin{remark}
In our construction from Section~\ref{section:MainConstruction} and the applications below, we focused on full almost automorphism groups of regular trees. But our arguments apply almost word for word to specific subgroups of Neretin groups. For instance, fix a subgroup $D \leq \mathrm{Sym}(d)$, let $W(D) \leq \mathrm{Aut}(\T_d)$ denote the infinite iterated wreath product of $D$, and consider the subgroup $\mathrm{AAut}_D(\mathcal{T}_{d})$ of $\mathrm{AAut}(\mathcal{T}_{d})$ given by the forest isomorphisms $\mathcal{T}_{d} \backslash A \to \mathcal{T}_{d} \backslash B$ that restrict to elements of $W(D)$ on each connected component (once identified with a copy of $\mathcal{T}_d$). As particular cases, $\mathrm{AAut}_D(\mathcal{T}_{d})$ coincides with $\mathrm{AAut}(\mathcal{T}_{d})$ if $D= \mathrm{Sym}(d)$ and with Thompson's group $V_{d}$ if $D=\{1\}$. These groups have been introduced and studied in \cite{MR2806497}. Our arguments apply to all the groups in this family, whatever $D$ is.
\end{remark}

\paragraph{A-T-menabiliy.} A topological group is \emph{a-T-menable} if it admits a continuous and proper action by affine isometries on a Hilbert space. Because groups acting properly on CAT(0) cube complexes are automatically a-T-menable \cite{niblo1997groups}, it immediately follows from Theorem~\ref{thm:MainNeretin} that:

\begin{theorem}\label{thm:NeretinATmenable}
The Neretin groups {$\mathcal{N}_{d,r}$} are a-T-menable.
\end{theorem}

\noindent
Here, $\mathcal{N}_{d,r}$ refers to the almost automorphism group of the tree $\mathcal{T}_{d,r}$ previously defined, i.e.\ the rooted tree whose root has degree $r$ and all of whose other vertices have $d$ children. Although not explicitly stated, Theorem~\ref{thm:NeretinATmenable} can also be found in \cite[Section~3.3.3]{le2015geometrie}. There, proper \emph{commensurating actions} of Neretin groups are constructed. Formally, admitting such an action amounts to admitting a proper action on a CAT(0) cube complex \cite{cornulier_wallings}, so again we can conclude thanks to \cite{niblo1997groups} that Neretin groups are a-T-menable.

\paragraph{Subgroups with fixed-point properties.} It is proved in \cite{navas2002groupes} that, if a subgroup of a Neretin group satisfies Kazhdan's property (T), then it has to lie inside the automorphism group of a subforest. Theorem~\ref{thm:MainNeretin} (combined with Claim~\ref{claim:Stabilisers}) leads to the following improvement of this observation:

\begin{theorem}
Every subgroup of $\mathcal{N}_{d,r}$ satisfying the property $(\mathrm{FW}_{\mathrm{LocFin}})$ lies in the automorphism group of a {cofinite} rooted subforest.
\end{theorem}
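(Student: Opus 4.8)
The plan is to apply Theorem~\ref{thm:MainNeretin}: the Neretin group $\mathcal{N}_d$ acts properly on the locally finite CAT(0) cube complex $\mathscr{C}$, with compact-open vertex stabilisers. Let $G \leq \mathcal{N}_d$ be a subgroup with property $(\mathrm{FW}_{\mathrm{LocFin}})$, i.e.\ every action of $G$ on a locally finite CAT(0) cube complex has bounded orbits (equivalently, by the usual CAT(0)-cube-complex dichotomy, a global fixed point in the cube complex after passing to a subcomplex, or at least a fixed point of the action on the Roller/halfspace structure). The strategy is to show that the fixed point forces $G$ into a vertex stabiliser, and then to translate the description of that stabiliser (Claim~\ref{claim:Stabilisers}) into the desired conclusion.

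\textbf{Key steps.} First I would restrict the $\mathcal{N}_d$-action on $\mathscr{C}$ to $G$; since $\mathscr{C}$ is locally finite by Proposition~\ref{prop:LocallyFinite} and CAT(0) by Theorem~\ref{thm:MainNeretin}, property $(\mathrm{FW}_{\mathrm{LocFin}})$ applies. Next, I would invoke the standard fact that a group acting on a CAT(0) cube complex with a bounded orbit fixes a vertex (one passes to the finite-dimensional situation locally, or uses that a bounded orbit has a circumcentre which lies in the cube complex when the complex is locally finite; alternatively, a $\mathrm{FW}$-type fixed point on the median/halfspace structure yields a fixed vertex). So there is a vertex $[R,\varphi] \in \mathscr{C}$ fixed by all of $G$. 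Then, by Claim~\ref{claim:Stabilisers}, the stabiliser of $[R,\varphi]$ in $\mathcal{N}_d$ is $\varphi\,\mathrm{Aut}(\mathcal{T}\bbslash R)\,\varphi^{-1}$, so $G \leq \varphi\,\mathrm{Aut}(\mathcal{T}\bbslash R)\,\varphi^{-1}$. Finally I would observe that $\mathrm{Aut}(\mathcal{T}\bbslash R)$ is precisely the automorphism group of the cofinite subforest $\mathcal{T}\bbslash R$ (which is a disjoint union of $\mathfrak{h}([R,\varphi])$ copies of $\mathcal{T}_d$), and that conjugating by $\varphi$ replaces this subforest by another cofinite subforest; hence $G$ lies in the automorphism group of a subforest, as claimed. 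If one wants the conclusion literally as ``automorphism group of a subforest'' one simply notes that $\varphi$ identifies $\mathcal{T}\bbslash R$ with another cofinite subforest $\mathcal{T}\bbslash R'$ and $\varphi\,\mathrm{Aut}(\mathcal{T}\bbslash R)\,\varphi^{-1} = \mathrm{Aut}(\mathcal{T}\bbslash R')$ inside $\mathcal{N}_d$.

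\textbf{Main obstacle.} The only genuinely non-formal point is the passage from ``$G$ has bounded orbits / has property $(\mathrm{FW}_{\mathrm{LocFin}})$'' to ``$G$ fixes a vertex of $\mathscr{C}$''. Since $\mathscr{C}$ is infinite-dimensional, one cannot directly cite the classical Bruhat--Tits-style argument for a circumcentre; instead one should use the characterisation of $(\mathrm{FW}_{\mathrm{LocFin}})$ in terms of the action on the pocset of halfspaces (a group with this property fixes a point of every locally finite CAT(0) cube complex it acts on, by the very definition adopted in Section~\ref{section:FirstApplications}), together with local finiteness to guarantee that the fixed point can be taken to be an actual vertex rather than a point at infinity. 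Once that is in place, everything else is a direct application of Claim~\ref{claim:Stabilisers} and the local-finiteness results already proved. I would therefore structure the proof as: (1) recall that $(\mathrm{FW}_{\mathrm{LocFin}})$ forces a fixed vertex in $\mathscr{C}$; (2) apply Claim~\ref{claim:Stabilisers} to identify the stabiliser with a conjugate of $\mathrm{Aut}(\mathcal{T}\bbslash R)$; (3) reinterpret that conjugate as the automorphism group of a cofinite subforest.
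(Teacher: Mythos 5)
Your proof is correct and follows essentially the same route the paper intends (the paper gives no separate argument, deriving the statement directly from Theorem~\ref{thm:MainNeretin} together with Claim~\ref{claim:Stabilisers}). The only point you leave slightly vague --- upgrading the global fixed point guaranteed by the very definition of $(\mathrm{FW}_{\mathrm{LocFin}})$ to a fixed \emph{vertex} --- is handled most cleanly not by local finiteness but by noting that $G$ stabilises the finite support cube of the fixed point and preserves the height function $\mathfrak{h}$, hence fixes that cube's unique vertex of minimal height, to which Claim~\ref{claim:Stabilisers} then applies.
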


\noindent
Here, we say that a group \emph{satisfies the property~$(\mathrm{FW}_{\mathrm{LocFin}})$} if all its actions on locally finite CAT(0) cube complexes admit global fixed points. Because Kazhdan's property~(T) can be characterised as a fixed-point property on (complete) median spaces \cite{medianviewpoint}, and since CAT(0) cube complexes can be thought of as median graphs \cite{mediangraphs, Roller}, the property~$(\mathrm{FW}_{\mathrm{LocFin}})$ can be thought of as a discrete and locally finite version of the property~(T).

\paragraph{Finiteness properties.} From \cite{MR3612334}, we know that Neretin groups are \emph{compactly presented}, i.e.\ they admit presentations with compact generating sets and relations of uniformly bounded length. More generally, it was proved in \cite{MR3801421} that they are \emph{of type $F_\infty$}, i.e.\ they act on contractible CW-complexes with compact-open stabilisers and with finitely many cell-orbits in each dimension. It turns out that our cube complexes also provide such CW-complexes, so we recover these two statements.

\begin{theorem}\label{thm:Connected}
Neretin groups $\mathcal{N}_{d,r}$ are of type $F_\infty$. In particular, they are compactly presented.
\end{theorem}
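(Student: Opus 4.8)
To prove that $\mathcal{N}_d$ is of type $F_\infty$, I will show that the CAT(0) cube complex $\mathscr{C}$ constructed in Theorem~\ref{thm:MainNeretin} serves as a suitable classifying space, after a standard cocompactness-by-skeleta argument. Recall that a topological group is of type $F_\infty$ if it acts on a contractible CW-complex with compact-open stabilisers and finitely many orbits of cells in each dimension. We already know from Theorem~\ref{thm:MainNeretin} that $\mathscr{C}$ is contractible (being CAT(0)) and that $\mathcal{N}_d$ acts on it with compact-open stabilisers (Claim~\ref{claim:Stabilisers}); moreover $\mathscr{C}$ is locally finite. So the only missing ingredient is to understand the orbits of cubes. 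The action is \emph{not} cocompact — the height function $\mathfrak{h}$ is $\mathcal{N}_d$-invariant and takes infinitely many values — but I claim that in each \emph{dimension} there are only finitely many orbits of cubes, which is exactly what $F_\infty$ requires.

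\textbf{Step 1: orbits of cubes.} First I would observe that the $\mathcal{N}_d$-action on vertices is transitive on each level set of $\mathfrak{h}$: by Lemma~\ref{lem:Choice}, any vertex of height $h$ can be written as $[R,\varphi]$ for the fixed tree $R$ with $h$ leaves, and then $\varphi^{-1}$ carries it to $[R,\mathrm{id}]$. More generally, a $k$-cube is determined by a vertex $[A,\varphi]$ together with a choice of $k$ distinct leaves $a_1,\dots,a_k$ of $A$; translating by $\varphi^{-1}$ we may assume $\varphi=\mathrm{id}$, and then by applying an automorphism of $\mathcal{T}\bbslash A$ (which fixes $[A,\mathrm{id}]$, by Claim~\ref{claim:Stabilisers}) permuting connected components, we may move the tuple $(a_1,\dots,a_k)$ to any other $k$-tuple of leaves of $A$ with the same ``shape.'' Since $A$ has finitely many leaves and there are finitely many admissible trees with a given number of leaves, for each fixed $k$ the number of $\mathcal{N}_d$-orbits of $k$-cubes is bounded in terms of the possible heights of the bottom vertex. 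The key point is that a $k$-cube only involves a \emph{bounded} number of leaves: if $[A,\varphi]$ is the minimal-height vertex of a $k$-cube then the maximal-height vertex has height $\mathfrak{h}([A,\varphi]) + k(d-1)$, but more importantly the \emph{relative} structure (which $k$ leaves of $A$ get their children added) is what matters, and $A$ itself can be taken to have exactly $\mathfrak{h}([A,\varphi])$ leaves. I need to argue that $\mathfrak{h}([A,\varphi])$ can be taken to lie in a bounded range once $k$ is fixed: indeed, in a $k$-cube one can always pass to the representative where $A$ is the \emph{smallest} admissible tree admitting $k$ distinct leaves, and having $k$ distinct leaves forces only $\mathfrak{h}(A) \le$ some explicit function of $k$ and $d$ (roughly, $A$ can be taken with at most $k$ leaves once we allow ourselves to not add children — wait, we need $k$ distinct leaves, so $\mathfrak{h}(A)\ge k$, and we can take $\mathfrak{h}(A)$ between $k$ and, say, $k + d$ by a minimality argument). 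Thus there are finitely many orbits of $k$-cubes.

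\textbf{Step 2: conclude.} Combining Step~1 with the contractibility and compact-open stabilisers from Theorem~\ref{thm:MainNeretin}, the complex $\mathscr{C}$ is a model for the classifying space $\underline{E}\mathcal{N}_d$ with finitely many orbits of cells in each dimension, which is precisely the definition of type $F_\infty$. The ``in particular'' statement follows because type $F_\infty$ implies type $F_2$, and type $F_2$ for a locally compact group is equivalent to compact presentability (a group of type $F_2$ acts cocompactly on the $2$-skeleton of a simply connected complex with compact stabilisers, whence a compact presentation by a Švarc–Milnor-type argument for topological groups, as in \cite{MR3612334}).

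\textbf{Main obstacle.} The delicate point is Step~1: making precise the claim that a $k$-cube can always be represented using a bounded-size admissible tree, so that the count of orbits in dimension $k$ is genuinely finite rather than merely ``locally finite.'' The subtlety is that the same $k$-cube has many representatives $[A,\varphi]$ with $A$ of unbounded size (one can always enlarge $A$ and absorb the change into $\varphi$, as in the Fact used in the proof of Proposition~\ref{prop:OneConnected}), so one must carefully select a \emph{canonical minimal} representative — the one where $A$ is as small as possible among admissible trees having $k$ pairwise distinct leaves — and then check that two $k$-cubes with isomorphic minimal data lie in the same orbit, using the component-permuting automorphisms supplied by Claim~\ref{claim:Stabilisers}. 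Once this normalisation is in place, the finiteness of orbits in each dimension is immediate from the finiteness of admissible trees of bounded size.
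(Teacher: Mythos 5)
Your Step~1 is false, and the gap is fatal to the overall strategy. The height function $\mathfrak{h}$ is $\mathcal{N}_d$-invariant, so two vertices of different heights lie in different orbits; since the heights $1, d, 2d-1, 3d-2,\dots$ are unbounded, the action already has infinitely many orbits of \emph{vertices}, hence infinitely many orbits of $k$-cubes for every $k$ (a $k$-cube with bottom vertex of height $h$ exists for every admissible $h\geq k$, and its orbit remembers $h$). Your proposed fix --- passing to a ``canonical minimal representative'' $[A,\varphi]$ with $A$ of bounded size --- cannot work: if $[A,\varphi]=[A',\varphi']$ then $\varphi^{-1}\varphi'$ is a forest isomorphism $\mathcal{T}\bbslash A'\to\mathcal{T}\bbslash A$, which forces $A$ and $A'$ to have the same number of leaves (this is essentially the Fact in the proof of Proposition~\ref{prop:OneConnected}). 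So the size of $A$ is an invariant of the vertex and cannot be shrunk; there is no bounded-size normal form. You also misdiagnose the source of non-cocompactness: it is not merely the infinite-dimensionality of $\mathscr{C}$, but already the infinitude of vertex orbits.

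The correct route --- and the one the paper takes --- is Brown's criterion. Filter $\mathscr{C}$ by the subcomplexes $\mathscr{C}(k)$ spanned by vertices of height $\leq k$. Each $\mathscr{C}(k)$ \emph{is} cocompact (finitely many orbits of cells, by the normal-form argument via Lemma~\ref{lem:Choice} and Lemma~\ref{lem:GoingUp}, which works because the heights occurring are now bounded by $k$), and the stabilisers are compact-open. Then one must show the connectivity of $\mathscr{C}(k)$ tends to infinity with $k$; by discrete Morse theory applied to the height function on the contractible complex $\mathscr{C}$, this reduces to showing that descending links of high vertices are highly connected. Proposition~\ref{prop:DescendingLinks} identifies the descending link of a vertex of height $h$ with the complex $\mathcal{I}(d,h)$ of pairwise disjoint $d$-subsets of $\{1,\dots,h\}$, and one then proves (Lemma~\ref{lem:Connected}) that $\mathcal{I}(p,q)$ is $n$-connected once $q\geq(3+2n)p$. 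Applying the topological version of Brown's criterion (\cite[Corollary~4.11]{MR4019790}) yields type $F_\infty$. None of this connectivity analysis appears in your proposal, and it is the essential content of the proof.
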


\noindent
The key point is to show that descending links in our cube complex become more and more connected as the height increases. As a consequence of Proposition~\ref{prop:DescendingLinks}, this assertion follows from the next lemma.

\begin{lemma}\label{lem:Connected}
Let $p,q \geq 1$ be two integers. For every $n \geq 0$, $\mathcal{I}(p,q)$ is $n$-connected as soon as $q \geq (3+2n)p$. 
\end{lemma}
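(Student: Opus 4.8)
The plan is to prove this by induction on $k$, establishing $k$-connectivity of $\mathcal{I}(p,q)$ by exhibiting it as highly connected via a link/star-of-a-vertex argument, a standard technique for simplicial complexes of this type (in the spirit of the connectivity arguments for complexes of disjoint sets appearing in the work on $\mathrm{Aut}(F_n)$, Thompson-like groups, and Hatcher--Wahl-style resolutions). The base case $k=0$ (path-connectivity) requires $q \geq 3p$: given two vertices $A, B$ (size-$p$ subsets of $\{1,\dots,q\}$), if they are already disjoint they are joined by an edge; otherwise, since $|A \cup B| \leq 2p < q - p + 1$ is guaranteed by $q \geq 3p$, there is room for a size-$p$ subset $C$ disjoint from both $A$ and $B$, giving a path $A - C - B$. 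This shows $\mathcal{I}(p,q)$ is connected when $q \geq 3p$, which is the $k=0$ instance.

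\textbf{Inductive step.} For the inductive step, I would assume the statement for all smaller values of $k$ and prove it for $k$, under the hypothesis $q \geq (3+2k)p$. Fix a vertex $v_0$ of $\mathcal{I}(p,q)$, corresponding to a size-$p$ subset $S_0 \subset \{1,\dots,q\}$. The link of $v_0$ in $\mathcal{I}(p,q)$ is naturally isomorphic to $\mathcal{I}(p, q-p)$ — the vertices disjoint from $S_0$ are exactly the size-$p$ subsets of the remaining $q-p$ elements, and the disjointness relation is inherited. Since $q \geq (3+2k)p$ gives $q - p \geq (3+2(k-1))p$, the inductive hypothesis shows $\mathrm{link}(v_0)$ is $(k-1)$-connected. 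The strategy is then: given a continuous map $f \colon S^k \to \mathcal{I}(p,q)$, first homotope it to a simplicial map with respect to some triangulation of $S^k$; then, using the flag-like / combinatorial structure, push the image off the vertex $v_0$ by modifying $f$ on the simplices that map into the open star of $v_0$. The key is that whenever a simplex $\tau$ of the sphere maps to a simplex containing $v_0$, the rest of the image of $\partial(\text{neighborhood of }\tau)$ lands in a bounded-size subcomplex, and because $q$ is large enough relative to $p$ and $k$ there is always a ``fresh'' size-$p$ set avoiding all the finitely many elements appearing, allowing one to coning off within $\mathcal{I}(p,q)$. Iterating over all vertices in (a finite portion of) the image, one reduces to a map avoiding enough vertices that it can be coned off entirely, showing $f$ is nullhomotopic.

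\textbf{Bookkeeping the bound.} The precise constant $3+2k$ should come out of carefully tracking how many elements of $\{1,\dots,q\}$ are ``used up'' at each stage: the image of a $k$-sphere, after simplicial approximation, meets only finitely many vertices, and at each inductive level one needs enough room both to apply the link argument (costing $p$ elements) and to perform the coning (costing a further $p$ elements, or $2p$ when one must avoid both a simplex and its coning vertex simultaneously). A clean way to organize this is the following: show that $\mathcal{I}(p,q)$ is \emph{$k$-connected} provided that for every subset $E \subset \{1,\dots,q\}$ with $|E| \leq (2k+2)p$ there exists a size-$p$ subset of $\{1,\dots,q\}$ disjoint from $E$ — i.e. provided $q - (2k+2)p \geq p$, which is exactly $q \geq (2k+3)p$. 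This ``richness'' condition then feeds the induction: it is self-improving in the sense that restricting to the link (removing $p$ used elements) drops $k$ by one and the arithmetic works out. Alternatively, one can invoke a Hatcher--Wahl-type criterion directly: $\mathcal{I}(p,q)$ is ``weakly Cohen--Macaulay'' of the appropriate dimension, and the links being copies of $\mathcal{I}(p,q-p)$ feed a clean induction.

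\textbf{Main obstacle.} The main obstacle I anticipate is the coning step: making precise the claim that a simplicial sphere-map into $\mathcal{I}(p,q)$ can be homotoped off a given vertex using only a controlled number of auxiliary elements of $\{1,\dots,q\}$. This requires a careful local analysis of how the star of $v_0$ sits inside $\mathcal{I}(p,q)$ and an explicit homotopy (replacing each simplex $\tau * v_0$ in the image by $\tau * w$ for a well-chosen fresh vertex $w$, and checking these replacements are mutually compatible across the sphere). Tracking the worst-case number of ``forbidden'' elements through this process — to extract exactly the constant $3+2k$ rather than something weaker — is the delicate part; everything else is a routine induction on $k$ using the identification $\mathrm{link}_{\mathcal{I}(p,q)}(v) \cong \mathcal{I}(p, q-p)$.
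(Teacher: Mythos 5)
Your overall strategy---induction on $k$, the identification $\mathrm{link}(v_0)\cong\mathcal{I}(p,q-p)$, pushing a sphere off bad vertices, and a final coning---is the right circle of ideas and matches the paper in spirit, and your base case and arithmetic for the constant are correct. But as written the proposal has a genuine gap, one you yourself flag: the step ``push the image off the star of $v_0$ and check the replacements are mutually compatible across the sphere'' is exactly the content of the lemma, and you never execute it. Processing the vertices of the image one at a time forces you to track which elements of $\{1,\dots,q\}$ have been consumed by earlier modifications, and without a precise formulation (e.g.\ a badness argument or an explicit Hatcher--Wahl-type criterion with its hypotheses verified) the argument is not complete.

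The paper avoids this bookkeeping entirely by organizing the bad vertices through the filtration $\mathcal{I}(p,b)\subset\mathcal{I}(p,b+1)\subset\cdots\subset\mathcal{I}(p,q)$ rather than vertex by vertex. The vertices of $\mathcal{I}(p,b+1)$ not in $\mathcal{I}(p,b)$ are the $p$-sets containing the element $b+1$; these are pairwise \emph{non-adjacent}, so $\mathcal{I}(p,b+1)$ is literally $\mathcal{I}(p,b)$ with disjoint cones glued over copies of $\mathcal{I}(p,b-p)$ (the links of the new vertices). Hence no compatibility issue arises: the inclusion $\mathcal{I}(p,b)\hookrightarrow\mathcal{I}(p,b+1)$ is $\pi_{k+1}$-surjective as soon as $\mathcal{I}(p,b-p)$ is $k$-connected, i.e.\ $b\geq(4+2k)p$ by the induction hypothesis. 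Iterating pushes any $(k+1)$-sphere into $\mathcal{I}(p,(4+2k)p)$, and this entire subcomplex lies in the link of the single vertex $\{q,q-1,\dots,q-p+1\}$ once $q\geq(5+2k)p=(3+2(k+1))p$, so it cones off globally in one step. If you want to salvage your one-vertex-at-a-time plan, you would need to either prove such a compatibility statement yourself or cite and verify the hypotheses of a standard criterion; the filtration trick is the cleaner route and is what produces the constant $3+2k$ without any worst-case tracking.
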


\noindent
In the proof, for all $a,b,c \geq 1$ such that $b \leq c$, we identify $\mathcal{I}(a,b)$ with the subcomplex of $\mathcal{I}(a,c)$ spanned by the vertices given by the subsets in $\{1, \ldots, b\} \subset \{1, \ldots, c\}$.

\begin{proof}[Proof of Lemma~\ref{lem:Connected}.]
It is clear that $\mathcal{I}(p,q)$ is connected if $q \geq 3p$. (In fact, any two vertices admit a common neighbour.) Assume that our statement holds for some $n$. First, observe that: 
\begin{itemize}
	\item Given $a,b \geq 1$, $\mathcal{I}(a,b+1)$ is obtained from $\mathcal{I}(a,b)$ by gluing cones over copies of $\mathcal{I}(a,b+1-a)$. Indeed, no two vertices given by subsets of $\{1, \ldots, b+1\}$ containing $b+1$ are adjacent and the link of every such vertex is isomorphic to $\mathcal{I}(a,b+1-a)$. It follows that the inclusion $\mathcal{I}(a,b) \hookrightarrow \mathcal{I}(a,b+1)$ is $\pi_{n+1}$-surjective if $b \geq (4+2n)a$, since we know from our assumption that the latter inequality implies that $\mathcal{I}(a,b+1-a)$ is $n$-connected. 
	\item Given $a,b \geq 1$, $\mathcal{I}(a,b)$ is homotopically trivial in $\mathcal{I}(a,c)$ if $c \geq b+a$, since it lies in the link of the vertex $\{c,c-1, \ldots, c-a+1 \}$.
\end{itemize}
By combining these two observations, it follows that $\mathcal{I}(p,q)$ is $(n+1)$-connected if $q \geq (3+2(n+1))p$, as desired.
\end{proof}

\begin{proof}[Proof of Theorem~\ref{thm:Connected}.]
For every $n \geq 1$, let $\mathscr{C}(n)$ denote the subcomplex of $\mathscr{C}$ spanned by the vertices of height $\leq n$. We already know that $\mathcal{N}_{d,r}$ acts on each $\mathscr{C}(r)$ with compact-open stabilisers. Thanks to our next two claims, \cite[Corollary~4.11]{MR4019790} applies and shows that $\mathcal{N}_{d,r}$ is of type $F_\infty$.

\begin{claim}
For every $n \geq 1$, $\mathcal{N}_{d,r}$ acts on $\mathscr{C}(n)$ with only finitely many orbits of cells.
\end{claim}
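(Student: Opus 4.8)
The plan is to show that every $\mathcal{N}_d$-orbit of cells of $\mathscr{C}_d(k)$ admits a representative whose ``$\mathcal{N}$-label'' is trivial, and then to count such normalised cells.

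Recall that, by definition, every $j$-cube of $\mathscr{C}$ is of the form
\[
\left\{ \left[ A \cup \bigcup_{i \in I} c(a_i), \varphi \right] \mid I \subset \{1, \ldots, j\} \right\}
\]
for some admissible subtree $A \subset \mathcal{T}$, some pairwise distinct leaves $a_1, \ldots, a_j$ of $A$, and some $\varphi \in \mathcal{N}$; the case $j = 0$ is just a vertex. Acting by $\varphi^{-1} \in \mathcal{N}$ carries this cube to the cube defined by the same combinatorial data $(A; a_1, \ldots, a_j)$ but with $\varphi$ replaced by $\mathrm{id}$. Since the $\mathcal{N}$-action $\psi \cdot [A,\varphi] = [A,\psi\varphi]$ preserves the height function $\mathfrak{h}$ (it changes neither the admissible subtree nor the number of its leaves in this representative of a vertex), this translate again lies in $\mathscr{C}_d(k)$. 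Hence every orbit of cells of $\mathscr{C}_d(k)$ contains a cube of the above shape with $\varphi = \mathrm{id}$.

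It then remains to bound the number of such ``unlabelled'' cells inside $\mathscr{C}_d(k)$. If the cube displayed above (with $\varphi = \mathrm{id}$) lies in $\mathscr{C}_d(k)$, then in particular its top vertex $\bigl[ A \cup c(a_1) \cup \cdots \cup c(a_j), \mathrm{id} \bigr]$ has height $\le k$, so the admissible subtree $A \cup c(a_1) \cup \cdots \cup c(a_j)$, and \emph{a fortiori} $A$ itself, has at most $k$ leaves. An admissible subtree of $\mathcal{T}$ with at most $k$ leaves has a uniformly bounded number of vertices, so there are only finitely many of them; and for each such $A$ there are only finitely many subsets $\{a_1, \ldots, a_j\}$ of its leaves. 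Thus $\mathscr{C}_d(k)$ contains only finitely many cubes with trivial label, and as these meet every orbit, $\mathcal{N}_d$ acts on $\mathscr{C}_d(k)$ with only finitely many orbits of cells.

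I do not anticipate any real difficulty here; the single point worth a word of care is that normalising a cube by acting with $\varphi^{-1}$ keeps it inside $\mathscr{C}_d(k)$, which is immediate because the $\mathcal{N}$-action preserves $\mathfrak{h}$.
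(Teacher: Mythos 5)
Your proof is correct and follows essentially the same strategy as the paper: translate each cube by $\varphi^{-1}$ so that its label becomes the identity, then count the resulting normalised cubes. The only (cosmetic) difference is in the final count: the paper normalises further, using Lemma~\ref{lem:Choice} to put the minimal vertex of the cube in the form $[R_h,\mathrm{id}]$ for a fixed admissible tree $R_h$ with $h$ leaves (yielding an explicit bound of $2^{h+1}$ orbits), whereas you let $A$ range over all admissible subtrees with at most $k$ leaves --- of which there are indeed finitely many, since such a subtree is a connected subgraph of the locally finite tree $\mathcal{T}$ containing the root and having a bounded number of vertices, hence lies in a fixed finite ball around the root.
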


\noindent
Let $Q$ be a cube in $\mathscr{C}(n)$. There exist an integer $m \geq 0$, an admissible subtree $R$, vertices $\ell_1, \ldots, \ell_m \in \lambda(R^C)$, and an element $\varphi \in \mathcal{N}_{d,r}$ such that
$$Q= \left\{ \left[ R \cup \bigcup\limits_{i \in I} \ell_i, \varphi \right] \mid I \subset \{1, \ldots, m\} \right\}.$$
Up to translating by $\varphi^{-1}$, we can assume that $\varphi= \mathrm{id}$. Because $R \cup \ell_1 \cup \cdots \cup \ell_m$ must lie in the first $n$ levels of $\mathcal{T}_{d,r}$, there exists a constant depending only on $n,d,r$ that bounds the number of possible choices for $m$, $R$, and $\ell_1, \ldots, \ell_m$. 

\begin{claim}
For all $k,n \geq 1$, $\mathscr{C}(n)$ is $k$-connected if $n\geq (3+2k)d-1$.
\end{claim}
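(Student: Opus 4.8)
The plan is to prove that $\mathscr{C}_d(k)$ is $n$-connected whenever $k \geq (3+2n)d-1$ by filling in loops and spheres via an upward-pushing (Morse-theoretic) argument, exactly analogous to the proof of simple connectivity in Proposition~\ref{prop:OneConnected}, but now carried out in all dimensions $\leq n+1$. More precisely, I would set up a discrete Morse function on $\mathscr{C}_d(k)$ using the height function $\mathfrak{h}$ and analyse the descending links: by Claim~\ref{claim:Link} (the join decomposition $\mathrm{link}(x) = \mathrm{link}_\uparrow(x) \ast \mathrm{link}_\downarrow(x)$) together with the fact that $\mathrm{link}_\uparrow(x)$ is a simplex, the descending link of a vertex $x$ inside $\mathscr{C}_d(k)$ is (a subcomplex of) $\mathcal{I}(d,\mathfrak{h}(x))$ by Proposition~\ref{prop:DescendingLinks}. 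However, one must be careful: inside the \emph{truncated} complex $\mathscr{C}_d(k)$, the ascending link of a vertex of height close to $k$ is cut off, so the relevant "relative link" controlling connectivity is really the descending link $\mathcal{I}(d, h)$ for vertices with $h \leq k$; the highest vertices, of height $k$ (or rather the vertices $x$ with $\mathfrak{h}(x) = k$, since heights jump by $d-1$), have full descending link $\mathcal{I}(d,k)$, which by Lemma~\ref{lem:Connected} is $n$-connected precisely when $k \geq (3+2n)d$.

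The cleanest route is an inductive "coning off from the top" argument. First I would observe that $\mathscr{C}_d(k)$ is obtained from $\mathscr{C}_d(k')$ (for the largest admissible height $k' < k$, i.e. $k' = k - (d-1)$ in terms of actual vertex heights, though one should phrase this directly in terms of the number-of-leaves stratification) by gluing, over each vertex $x$ of maximal height $k$, the cone on its descending link $\mathrm{link}_\downarrow(x) \cong \mathcal{I}(d,k)$: indeed the union of all closed cells containing $x$ whose other vertices have strictly smaller height is exactly the cone $x \ast \mathrm{link}_\downarrow(x)$, since the ascending part of the link is empty in $\mathscr{C}_d(k)$. These cones are glued to $\mathscr{C}_d(k-1)$ (suitably interpreted) along the copies of $\mathcal{I}(d,k)$, and distinct top vertices have disjoint descending open stars. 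Since $\mathcal{I}(d,k)$ is $n$-connected once $k \geq (3+2n)d$ by Lemma~\ref{lem:Connected}, and each $\mathcal{I}(d,k)$ sits inside the contractible cone on it, a standard excision/Mayer--Vietoris (or rather the relative Hurewicz plus van Kampen, applied one cone at a time) argument shows the inclusion $\mathscr{C}_d(k-1) \hookrightarrow \mathscr{C}_d(k)$ is $n$-connected, i.e. $\pi_i$-isomorphic for $i < n$ and $\pi_n$-surjective. Then, since $\mathscr{C}_d = \bigcup_k \mathscr{C}_d(k)$ is contractible (being CAT(0) by Theorem~\ref{thm:MainNeretin}) and equals the increasing union of the $\mathscr{C}_d(k)$, for $k$ large enough the inclusion $\mathscr{C}_d(k) \hookrightarrow \mathscr{C}_d$ must already be $n$-connected; bootstrapping back down through the chain of $n$-connected inclusions pins down the explicit bound $k \geq (3+2n)d - 1$.

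An alternative, more self-contained phrasing avoids passing through the full $\mathscr{C}_d$ and instead runs a direct Morse-theoretic induction on $n$: assuming $\mathscr{C}_d(k)$ is $(n-1)$-connected for $k$ in the appropriate range, take an $n$-sphere $S \to \mathscr{C}_d(k)$, homotope it to be simplicial, and push its simplices off the vertices of highest height one at a time using contractibility of the cones $x \ast \mathrm{link}_\downarrow(x)$ — each such move is possible because the relevant piece of $S$ near $x$ maps into $\mathrm{link}_\downarrow(x) \cong \mathcal{I}(d,k)$, which is $(n-1)$-connected (so the sphere piece can be filled there), using $k - (d-1) \geq (3+2(n-1))d$, equivalently $k \geq (3+2n)d - 1$ — thereby reducing to the lower truncation level and invoking the inductive hypothesis. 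This is the argument the paper most likely intends, since it mirrors the Claim~\ref{claim:Dominate}/domination machinery already developed.

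The main obstacle, and the place I would spend the most care, is the bookkeeping at the boundary of the truncation: one has to correctly identify that in $\mathscr{C}_d(k)$ the "Morse link" one should use at a vertex $x$ of height $h \leq k$ is \emph{not} the full link but only whatever survives the truncation, and to check that pushing down a top-height vertex does not create new top-height vertices or obstructions — this is handled by the fact that the descending open stars of distinct maximal vertices are disjoint and that removing them leaves precisely $\mathscr{C}_d(k-1)$ (using the Fact from the proof of Proposition~\ref{prop:OneConnected} that $[U,\xi]=[V,\xi]$ forces $U=V$, which guarantees the cell structure near a top vertex is as expected). The numerical matching — why $(3+2n)d-1$ and not $(3+2n)d$ — comes from the $d-1$ gap between consecutive heights, and getting this constant exactly right is the only genuinely delicate point; everything else is a routine application of Lemma~\ref{lem:Connected}, Proposition~\ref{prop:DescendingLinks}, and Claim~\ref{claim:Link}.
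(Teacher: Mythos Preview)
Your proposal is correct and takes essentially the same approach as the paper: the paper's proof is a terse two-sentence appeal to Morse theory, observing that since $\mathscr{C}_d$ is contractible (being CAT(0)), it suffices that the descending link of every vertex of height $\geq (3+2n)d$ is $n$-connected, which follows from Proposition~\ref{prop:DescendingLinks} and Lemma~\ref{lem:Connected}; your ``coning off from the top'' description simply unpacks this. One minor correction: the $-1$ in the bound $k \geq (3+2n)d-1$ has nothing to do with the $d-1$ height gap --- it is just that any vertex outside $\mathscr{C}_d(k)$ has integer height $h \geq k+1 \geq (3+2n)d$, so Lemma~\ref{lem:Connected} applies to its descending link $\mathcal{I}(d,h)$.
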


\noindent
Because $\mathscr{C}$ is contractible (as a consequence of the CAT(0) property), it follows from Morse theory that it suffices to show that the descending link of a vertex of height $\geq (3+2k)d$ is $k$-connected. But this claims is a direct consequence of Proposition~\ref{prop:DescendingLinks} and Lemma~\ref{lem:Connected}.
\end{proof}

\subsection{A fixed-point theorem and its applications}\label{Subsection_fixed_point_theorem}

\noindent
In this section, our goal is to prove the following fixed-point theorem and to apply it to Neretin and Cremona groups. 

\begin{theorem}\label{thm:PurelyEll}
Let $G$ be a finitely generated group acting on a CAT(0) cube complex $X$ by elliptic isometries. If $X$ has no infinite cube, then $G$ has a global fixed point.
\end{theorem}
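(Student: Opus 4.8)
The strategy is to argue by contradiction: suppose $G$ acts on a CAT(0) cube complex $X$ with every element elliptic but without a global fixed point. Since $G$ is finitely generated, fix a finite generating set $s_1, \ldots, s_n$. Each $s_i$ fixes a point, hence fixes a vertex (the fixed-point set of an elliptic isometry of a CAT(0) cube complex contains a vertex, being itself a CAT(0) subcomplex). More generally, by a Helly-type property for cubical convex subsets, any finite collection of pairwise-intersecting fixed-point sets has a common point; so the obstruction to a global fixed point for $G$ comes from some pair of generators (or rather, from the combinatorial configuration of their fixed-point sets) failing to interact nicely ``at infinity.'' The key point I would isolate is that the fixed-point set $\mathrm{Fix}(s_i)$ is a nonempty convex subcomplex, and the question is whether $\bigcap_i \mathrm{Fix}(s_i) \neq \emptyset$.

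\textbf{Reduction to two generators and a hyperplane argument.} The main tool I would use is the theory of hyperplanes and the associated wallspace structure. For a convex subcomplex $C \subset X$, there is a well-defined combinatorial projection (gate map) $\pi_C \colon X \to C$. Given two convex subcomplexes $C_1 = \mathrm{Fix}(s_1)$ and $C_2 = \mathrm{Fix}(s_2)$ that are disjoint, there is a nonempty ``bridge'' — a product region $\mathfrak{b}(C_1,C_2) \cong [0,N] \times K$ realizing the combinatorial distance, where the hyperplanes separating $C_1$ from $C_2$ are exactly the $N$ hyperplanes crossing the $[0,N]$ factor. The crucial observation: if $X$ has no infinite cube, then chains of pairwise-crossing hyperplanes are bounded in length, which forces $N$ to be controlled and, more importantly, allows an inductive/descent argument on the number of hyperplanes separating the fixed sets. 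I would try to show that if the fixed sets of all generators do not have a common point, one can produce an element of $G$ (a suitable word in the generators) whose displacement function has no zero — contradicting ellipticity. The cleanest route: take $C = \bigcap_{i} \mathrm{Fix}(s_i)$ to be empty but minimize the number of hyperplanes separating the $\mathrm{Fix}(s_i)$ from some basepoint, and use the no-infinite-cube hypothesis to run a finite descent, at each stage either merging two fixed sets or exhibiting a non-elliptic element.

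\textbf{Alternative via finite-dimensional reduction.} An arguably smoother approach: even though $X$ is infinite-dimensional, the no-infinite-cube hypothesis means every cube has dimension bounded by some $N = N(x)$ locally, but not globally. However, for a finitely generated group $G$ with finite generating set $T$, and any basepoint $x_0$, the orbit $G \cdot x_0$ together with geodesics between $\gamma x_0$ and $\gamma' x_0$ for $\gamma, \gamma'$ at bounded word-distance spans a $G$-invariant subcomplex; one can hope this subcomplex, or a closely related one built from the (bounded-length) combinatorial convex hulls of finite orbit pieces, is finite-dimensional. If $X$ has no infinite cube, then any \emph{finite} set of hyperplanes has a bounded pairwise-crossing chain, so the convex hull of a finite set of vertices is finite-dimensional. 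The plan would be: build an increasing union of finite-dimensional $G$-invariant convex subcomplexes exhausting a $G$-invariant subcomplex, apply \cite{Sageev-ends_of_groups} (the finite-dimensional fixed-point theorem, valid for each piece) to get fixed vertices $v_k$ in nested pieces, and then argue these fixed sets are nested/coherent so that a limiting fixed point exists — using again that a descending sequence of nonempty convex subcomplexes with uniformly... here is the subtlety.

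\textbf{Main obstacle.} The hard part will be exactly the passage to the limit / compactness step: $X$ is not locally finite and not finite-dimensional, so a nested sequence of nonempty convex subcomplexes $\mathrm{Fix}_k$ need not have nonempty intersection, and the finite-dimensional theorem applied piecewise gives fixed points in each piece but with no a priori coherence. Controlling this is where the no-infinite-cube hypothesis must do real work — presumably by bounding how the fixed sets can ``escape'' (an infinite strictly decreasing sequence of fixed sets whose intersection is empty should produce, via the separating hyperplanes, an infinite pairwise-crossing family of hyperplanes, hence an infinite cube by Sageev's construction of the cube complex dual to a wallspace — contradiction). Making this dictionary between ``infinite properly descending chain of fixed subcomplexes'' and ``infinite cube'' precise, and checking the elliptic hypothesis is used (not just finite generation), is the technical heart of the argument and the step I expect to occupy most of the proof.
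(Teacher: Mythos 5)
Your proposal is a plan rather than a proof, and you say so yourself: the step you label ``the technical heart of the argument'' is precisely the step you do not carry out. Worse, the two concrete mechanisms you suggest for that step do not work. First, the finite-dimensional reduction fails: the hypothesis is only that $X$ has no \emph{infinite} cube, so a $G$-invariant convex subcomplex (e.g.\ the convex hull of an orbit) may still contain finite cubes of arbitrarily large dimension, and there is in general no exhaustion by finite-dimensional $G$-invariant convex subcomplexes to which Sageev's theorem could be applied. (The paper's own remark after Theorem~\ref{thm:IntroFixedPointThm} emphasises that every CAT(0) cube complex embeds equivariantly into an infinite cube, so the no-infinite-cube hypothesis cannot be traded for finite dimension.) Second, the ``dictionary'' you propose — that an infinite properly descending chain of nonempty convex subcomplexes with empty intersection yields an infinite family of pairwise-crossing hyperplanes, hence an infinite cube — is false: a nested chain of halfspaces $D_1 \supsetneq D_2 \supsetneq \cdots$ with empty intersection is exactly such a chain, and the associated hyperplanes are pairwise \emph{non}-transverse. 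So the contradiction you hope to extract at the limit stage is not there.

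For comparison, the paper's proof runs quite differently. After reducing to the convex hull of an orbit (so that there are only finitely many $G$-orbits of hyperplanes, Lemma~\ref{lem:ConvexHullOrbit}), it splits according to whether the metric $d_\infty$ (maximal number of pairwise non-transverse hyperplanes separating two vertices) is bounded on $X$. If it is unbounded, a pigeonhole argument produces three pairwise nested hyperplanes in a single $G$-orbit, and a standard skewering argument then exhibits an element of $G$ with unbounded orbits, contradicting ellipticity. If $d_\infty$ is bounded, one defines the \emph{depth} of a halfspace and distinguishes balanced from unbalanced hyperplanes; the deep halfspaces of unbalanced hyperplanes pairwise intersect and the balanced hyperplanes are pairwise transverse, so the intersection of the deep halfspaces, if nonempty, is a $G$-invariant cube, necessarily finite by hypothesis, giving the fixed point; if that intersection is empty, one constructs a geodesic ray of finite $d_\infty$-diameter, which by Lemma~\ref{lem:RayDiamInfty} forces an infinite cube — a contradiction. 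The $d_\infty$ dichotomy, the depth/balanced-hyperplane analysis, and Lemma~\ref{lem:RayDiamInfty} are the actual content of the theorem, and none of them appears in your proposal.
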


We postpone the proof of the theorem and first explain its applications.

\paragraph{Application to Neretin groups.} By combining Theorem~\ref{thm:MainNeretin} (and Claim~\ref{claim:Stabilisers}) with Theorem~\ref{thm:PurelyEll}, one immediately obtains:

\begin{theorem}\label{thm:purelyellipticneretin}
Let $H \leq \mathcal{N}_{d,r}$ be a finitely generated subgroup. If each element of $H$ induces an automorphism of some cofinite rooted subforest, then $H$ entirely lies in the automorphism group of a cofinite rooted subforest. 
\end{theorem}

\noindent
The statement also follows from \cite[Corollary~3.6]{le2018commensurated}. 

\begin{remark}
	As pointed out to us by the referee, Theorem~\ref{thm:purelyellipticneretin} can be generalised to arbitrary rooted trees of bounded degree. Indeed, if $T$ is a rooted tree of bounded degree, we can complete it to a regular rooted tree $T'$ by adding the missing vertices. The embedding of $T$ into $T'$ gives an embedding of $\AAut(T)$ into $\AAut(T')$. We can then apply Theorem~\ref{thm:purelyellipticneretin} to $\AAut(T')$. 
\end{remark}

\paragraph{Application to Cremona groups.} 
We can now prove our regularization theorem for finitely generated groups of birational transformations:

\begin{proof}[Proof of Theorem~\ref{thm:CremonaReg}]
	
	We can identify $\AAut(\FF\p^2(\F_q))$ with $\mathcal{N}_d$ by conjugating with an almost isomorphism between $\T_d$ and $\FF\p^2(\F_q)$. In that way we see $\Bir_{\F_q}(\p^2)$ as a subgroup of $\mathcal{N}_d$ for $d=q+1$, and we obtain an isometric action on the CAT(0) cube complex $\mathscr{C}$. 
		
	 Clearly, being elliptic is preserved by this identification. Note that a subgroup $G\subset \Bir_{\F_q}(\p^2)$ fixes a vertex in $\mathscr{C}$ if and only if there exists a finite subforest $F\subset \FF \p^2(\F_q)$  of the blow-up forest such that every element in $G$ can be represented by a triple of the form $(\psi, F, F)$. If we assume $G$ to be finitely generated, then Lemma~\ref{lem:CremonaElliptic} implies that $G$ is conjugate to a subgroup of $\BBir(S)$ for some regular projective surface $S$.
	
\medskip

	Let $L$ be the finite field extension of $\F_q$ such that all the base-points of the generators in $\Gamma$, and hence the base-points of all elements in $\Gamma$ are defined over $L$. We now consider the action of $\Bir(S_L)$ on the $\CAT(0)$ cube complex $\mathscr{C}$. By Lemma~\ref{lem:CremonaElliptic}, every element in $\Gamma$  is elliptic. Theorem~\ref{thm:PurelyEll} implies that $\Gamma$ fixes a vertex in $\mathscr{C}$.  By Lemma~\ref{lem:CremonaElliptic}, there exists a regular projective surface $S'_L$ defined over $L$ such that $\Gamma$ is conjugate by a birational map defined over $L$ to a subgroup of $\BBir(S'_L)$. Since all the base-points of $\Gamma$ are defined over $L$, this implies that $\Gamma$ is conjugate to a subgroup of $\Aut(S'_L)$. By \cite[Theorem~1.3]{lonjou-urech} there exists a regular projective surface $\tilde{S}$ defined over $\F_q$ such that $\Gamma$ is conjugate  (by a birational map defined over $\F_q$) to a subgroup of $\Aut(\tilde{S})$.
\end{proof}

Note that the above proof of Theorem~\ref{thm:CremonaReg} implicitely contains the following analogue of Theorem~\ref{thm:CremonaReg} for birational transformations that are bijective on $\F_q$-points.

\begin{proposition}\label{prop:BBirreg}
	Let $\kk$ be a finite field, $S$ a surface over $\kk$ and $\Gamma\subset\Bir(S)$ a finitely generated subgroup such that for every element in $\gamma\in\Gamma$ there exists a regular projective surface $S'$ over $\kk$ such that $\gamma$ is conjugate to an element in $\BBir(S')$, then there exists a regular projective surface $T$ over $\kk$ such that $\Gamma$ is conjugate to a subgroup of $\BBir(T)$.
\end{proposition}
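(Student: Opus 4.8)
The plan is to mimic the proof of Theorem~\ref{thm:CremonaReg} almost verbatim, tracking the additional information that each $\gamma \in \Gamma$ is conjugate to an element of $\BBir(S')$ rather than merely to an automorphism of a projective surface. Since the hypotheses now allow us to stay over $\kk$ itself, the final descent step (invoking \cite[Theorem~1.3]{lonjou-urech}) is no longer needed, which is why the conclusion lands in $\BBir(T)$ over $\kk$. First I would fix a finite generating set $\{\gamma_1,\dots,\gamma_n\}$ of $\Gamma$ and, as in the proof of Theorem~\ref{thm:CremonaReg}, reduce to studying the action of $\Bir(S)$ on the $\CAT(0)$ cube complex $\mathscr{C}$ associated to the rational blow-up tree $\T S(\kk)$ from Section~\ref{section:MainConstruction} (after conjugating by a birational map so that $S$ is, say, rational; if $S$ is not rational the bubble space still carries the forest structure and the construction of $\mathscr{C}$ goes through identically). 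The point is that $\mathscr{C}$ is locally finite (Proposition~\ref{prop:LocallyFinite}), hence in particular has no infinite cube, so Theorem~\ref{thm:PurelyEll} is available.

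Next I would check that each generator $\gamma_i$ acts as an elliptic isometry on $\mathscr{C}$. By hypothesis $\gamma_i$ is conjugate by a birational map to an element of $\BBir(S_i')$ for some regular projective surface $S_i'$; after blowing up more points we may assume $S_i'$ lies above $S$ and the conjugating map is a blow-up $\pi_i\colon S_i'\to S$. Then, exactly as in the second half of the proof of Lemma~\ref{lem:CremonaElliptic}, letting $T_i$ be the admissible subtree spanned by the root, its children, and the points blown up by $\pi_i$, the almost automorphism $\tilde{\gamma_i}$ has a representative of the form $(\psi_{\gamma_i}, T_i, T_i)$, i.e.\ $\gamma_i$ is elliptic on $\mathscr{C}$. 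Since ellipticity for a single isometry passes to the subgroup generated only after one knows the group has a fixed point, I now invoke Theorem~\ref{thm:PurelyEll}: $\Gamma$ is finitely generated and acts by elliptic isometries on the $\CAT(0)$ cube complex $\mathscr{C}$ which has no infinite cube, hence $\Gamma$ fixes a vertex $[T,\varphi]$ of $\mathscr{C}$.

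Finally I would translate the fixed vertex back into birational geometry. A vertex fixed by $\Gamma$ means every $\gamma\in\Gamma$ has a representative of the form $(\psi_\gamma, T, T)$ for a single admissible subtree $T\subset \T S(\kk)$ (using Claim~\ref{claim:Stabilisers} to see the stabiliser of $[T,\varphi]$ is $\varphi\Aut(\mathcal{T}\bbslash T)\varphi^{-1}$, and absorbing $\varphi$ into the choice of blow-up as in Lemma~\ref{lem:CremonaElliptic}). By the first half of the proof of Lemma~\ref{lem:CremonaElliptic} — enlarging $T$ to $T^m$ for $m$ large enough that $T^m$ contains all vertices corresponding to base-points of $\gamma_1,\dots,\gamma_n$, and applying Lemma~\ref{lem:nobasepoints} — we conclude that $\Gamma$ is conjugate to a subgroup of $\BBir(S_{T^m})$. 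Taking $T:=S_{T^m}$, which is a regular projective surface over $\kk$ (a blow-up of $S$ at rational points, all defined over $\kk$), gives the claim. The main obstacle I anticipate is purely bookkeeping: making sure that "conjugate to an element of $\BBir(S')$'' for each individual generator can be upgraded to a \emph{common} admissible subtree $T$ working simultaneously for all of $\Gamma$ — but this is precisely what the fixed-point theorem delivers, and the passage from a fixed vertex to a common $T$ (and then, via Lemma~\ref{lem:nobasepoints}, to a single surface $T$ with $\Gamma$ conjugate into $\BBir(T)$) is exactly the argument already run in Lemma~\ref{lem:CremonaElliptic} and Theorem~\ref{thm:CremonaReg}, so no genuinely new difficulty arises.
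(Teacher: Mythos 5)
Your proposal is correct and follows essentially the same route as the paper: the paper's own proof is just the three-step argument you give (every element is elliptic on $\mathscr{C}$ by the hypothesis via Lemma~\ref{lem:CremonaElliptic}, Theorem~\ref{thm:PurelyEll} yields a fixed vertex, and Lemma~\ref{lem:CremonaElliptic} translates the fixed vertex back into a conjugation into $\BBir(T)$). Your expanded bookkeeping — checking ellipticity generator by generator and unwinding the fixed vertex through Lemma~\ref{lem:nobasepoints} — merely makes explicit what the paper delegates to Lemma~\ref{lem:CremonaElliptic}.
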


\begin{proof}
	Again we consider the action of $\Gamma$ on the $\CAT(0)$ cube complex $\mathscr{C}$. Our assumption implies that every element in $\Gamma$ acts by an elliptic isometry on $\mathscr{C}$, so by Theorem~\ref{thm:PurelyEll}, $\Gamma$ fixes a vertex in $\mathscr{C}(S)$. Hence, by Lemma~\ref{lem:CremonaElliptic}, there exists a regular projective surface $T$ such that $\Gamma$ is conjugate  to a subgroup of $\BBir(T)$.
\end{proof}

\begin{remark}
	Let $S$ be a surface over a finite field $\kk$. By considering the action of $\Bir(S)$ by almost automorphisms on its rational blow-up forest $\FF S(\kk)$, we obtain an isometric action on the CAT(0) cube complex $\mathscr{C}$ associated to $\FF S(\kk)$. The cube complex $\mathscr{C}$ has the advantage of being locally compact. However, the subset of elements in $\Bir(S)$ fixing a vertex in $\mathscr{C}$ is very large: it consists exactly of the elements that are conjugate to a birational transformation that is bijective on $\kk$-points on some regular projective surface. 
	\medskip
	
	In \cite{lonjou-urech}, two other CAT(0) cube complexes are constructed with an isometric action of $\Bir(S)$. Most importantly, the \emph{blow-up complex}, whose vertex stabilisers correspond exactly to projectively regularisable elements in $\Bir(S)$. Therefore, the subset of elements in $\Bir(S)$ that induce loxodromic elements on the blow-up complex is strictly larger than the subset of elements in $\Bir(S)$ inducing loxodromic elements on $\mathscr{C}$. 
	\medskip
	
	Another weakening of the blow-up complex is the CAT(0) cube complex $\mathcal{C}^0$ constructed in \cite{lonjou-urech}. The elements in $\Bir(S)$ fixing a vertex in $\mathcal{C}^0$ are exactly the (not necessarily projectively) regularisable elements in $\Bir(S)$.  Hence, also the subset of elements in $\Bir(S)$ that induce loxodromic elements on the blow-up complex is strictly larger than the subset of elements in $\Bir(S)$ inducing loxodromic elements on $\mathcal{C}^0$. The main advantage of $\mathcal{C}^0$ is that it can be constructed for varieties of arbitrary dimension.  
\end{remark}

\paragraph{Proof of the fixed-point theorem.} From now on, we assume that the reader is familiar with the geometry of CAT(0) cube complexes. Before turning to Theorem \ref{thm:PurelyEll}, we need two preliminary lemmas. The first one is well-known and contained in the proof \cite[Theorem~5.1]{Sageev-ends_of_groups}. We include a proof for the reader's convenience.

\begin{lemma}\label{lem:ConvexHullOrbit}
	Let $G$ be a finitely generated group acting on a CAT(0) cube complex $X$. For every vertex $x_0 \in X$, the convex hull of the orbit $G \cdot x_0$ is a $G$-invariant convex subcomplex on which $G$ acts with finitely many orbits of hyperplanes.
\end{lemma}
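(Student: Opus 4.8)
The plan is to let $Y$ denote the convex hull of the orbit $G \cdot x_0$ in $X$, i.e. the smallest convex subcomplex containing $G \cdot x_0$. Since $G$ permutes $G \cdot x_0$ and the convex hull of a $G$-invariant set is $G$-invariant (convexity being a $G$-invariant notion, as $G$ acts by cubical automorphisms hence by isometries preserving the combinatorial/median structure), $Y$ is automatically a $G$-invariant convex subcomplex. The only real content is the claim that $G$ acts on $Y$ with finitely many orbits of hyperplanes, so that is where the work goes.

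First I would fix a finite generating set $S = \{s_1, \ldots, s_n\}$ of $G$, and recall the standard description of the convex hull via hyperplanes: a hyperplane $\hat{\mathfrak{h}}$ of $X$ crosses $Y$ (equivalently, the carrier of $\hat{\mathfrak{h}}$ meets $Y$, equivalently $\hat{\mathfrak{h}}$ restricts to a hyperplane of $Y$) if and only if $\hat{\mathfrak{h}}$ separates two points of $G \cdot x_0$. This is the characterisation used in the proof of \cite[Theorem~5.1]{Sageev-ends_of_groups}: the convex hull of a set is obtained by intersecting all halfspaces containing it, and a hyperplane survives in the hull exactly when both of its halfspaces contain a point of the set. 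So it suffices to bound, up to the $G$-action, the number of hyperplanes separating two points of $G \cdot x_0$.

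The key step is then a "concatenation of geodesics" argument. Given two points $g \cdot x_0$ and $g' \cdot x_0$ of the orbit, write $g^{-1}g' = s_{i_1}^{\pm 1} \cdots s_{i_k}^{\pm 1}$ as a word in the generators; then the path $g\cdot x_0, g s_{i_1}^{\pm1} \cdot x_0, \ldots, g' \cdot x_0$ connects the two points, and any hyperplane separating them must be crossed by this path, hence must separate $h \cdot x_0$ from $h s_i^{\pm 1} \cdot x_0$ for some prefix $h$ and some generator $s_i$. Now apply $h^{-1}$: such a hyperplane is in the $G$-orbit of a hyperplane separating $x_0$ from $s_i^{\pm 1} \cdot x_0$ for one of the finitely many generators $s_i$. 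Since $X$ need not be finite-dimensional or locally finite I would be slightly careful here and phrase it as: every hyperplane separating a point of $G\cdot x_0$ from another lies in the $G$-orbit of a hyperplane separating $x_0$ from $s \cdot x_0$ for some $s \in S \cup S^{-1}$; and for each such pair $(x_0, s\cdot x_0)$ the set of separating hyperplanes is finite (it has cardinality the combinatorial distance $d(x_0, s\cdot x_0) < \infty$). Hence the hyperplanes of $Y$ fall into at most $\sum_{s \in S \cup S^{-1}} d(x_0, s \cdot x_0)$ many $G$-orbits, which is finite. This finishes the proof.

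The main obstacle — really the only subtlety — is making sure the hyperplane-counting argument does not secretly use finite-dimensionality: in an infinite-dimensional CAT(0) cube complex a single cube already has infinitely many hyperplanes, so one must genuinely work with the separation characterisation of the hull and with the fact that any two vertices are at finite combinatorial distance (so are separated by only finitely many hyperplanes), rather than with any local finiteness. Once that is kept in mind the argument is routine; the rest (that $Y$ is convex and $G$-invariant) is immediate from the definitions.
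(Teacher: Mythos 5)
Your argument is correct and is essentially identical to the paper's own proof: both fix a finite generating set, characterise the hyperplanes of the hull as those separating two orbit points, and use the telescoping word $g^{-1}g' = s_{i_1}\cdots s_{i_k}$ to show every such hyperplane is a $G$-translate of one of the finitely many hyperplanes separating $x_0$ from $s\cdot x_0$ for $s$ a generator. Your added remark about not relying on finite-dimensionality is a sensible precaution but does not change the argument.
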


\begin{proof}
	Fix a finite and symmetric generating set $S \subset G$. Let $Y$ denote the convex hull of $G \cdot x_0$, and let $J$ be a hyperplane crossing $Y$. There must exist $g_1,g_2 \in G$ such that $J$ separates $g_1x_0$ and $g_2x_0$. Write $g_1^{-1}g_2$ as a product $s_1 \cdots s_r$ where $s_1, \ldots, s_r \in S$. As a consequence of the convexity of halfspaces \cite{Sageev-ends_of_groups}, $J$ must separate $g_1s_1 \cdots s_{i-1} x_0$ and $g_1s_1 \cdots s_i x_0$ for some $1 \leq i \leq r$, which implies that $J$ admits a $G$-translate that separate $x_0$ and $s_ix_0$. Thus, we have proved that every hyperplane of $Y$ admits a $G$-translate in 
	$$\bigcup\limits_{s \in S} \left\{ \text{hyperplanes separating $x_0$ and $sx_0$} \right\},$$
which is finite since there exist only finitely many hyperplanes separating two given vertices \cite{Sageev-ends_of_groups}. This proves our lemma.
\end{proof}

\noindent
For our next lemma, we use the following notation. Given a CAT(0) cube complex $X$ and two vertices $x,y \in X$, we denote by $d_\infty(x,y)$ the maximal number of pairwise non-transverse hyperplanes that separate $x$ and $y$. Although this observation is not used in the sequel, $d_\infty$ turns out to be a metric and to coincide with the standard extension of the $\ell^\infty$-metrics defined on each cube of $X$ \cite[Corollary 2.5]{MR1111556}. {To a given hyperplane $J$ of a CAT(0) cube complex, we denote by $N(J)$ the \emph{carrier} of $J$, i.e.\ the subcomplex spanned by the edges crossed by $J$.}

\begin{lemma}\label{lem:RayDiamInfty}
Let $X$ be a CAT(0) cube complex and $\rho$ a geodesic ray. If $X$ does not contain an infinite cube, then $\rho$ has infinite diameter with respect to $d_\infty$.
\end{lemma}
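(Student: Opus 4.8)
The plan is to argue by contradiction. Suppose that $\rho$ has finite diameter with respect to $d_\infty$, say $\operatorname{diam}_\infty(\rho) = N$. The ray $\rho$ crosses infinitely many hyperplanes $J_1, J_2, J_3, \ldots$ (indexed in the order they are crossed), since an edge-path geodesic ray crosses each hyperplane at most once and has infinitely many edges. The key point is that, for any two hyperplanes $J_i$ and $J_j$ crossed by $\rho$, both separate the initial vertex $\rho(0)$ from $\rho(t)$ for all sufficiently large $t$; hence any set of pairwise non-transverse hyperplanes among the $J_i$ is a chain of hyperplanes separating $\rho(0)$ from a point of $\rho$, and therefore has size at most $N$. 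In other words, the collection $\{J_i : i \geq 1\}$ contains no chain (with respect to the separation order) of length $> N$.

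Next I would invoke a Dilworth-type / Ramsey-type argument on the infinite poset of hyperplanes $\{J_i\}_{i \geq 1}$, partially ordered by the "nested/separation" relation, where two elements are incomparable precisely when they are transverse. Since there is no chain of length $N+1$, an infinite version of Dilworth's theorem (or a direct pigeonhole using that chains are bounded) produces an infinite subset $\{J_{i_1}, J_{i_2}, \ldots\}$ that is an antichain, i.e. pairwise transverse. Now an infinite family of pairwise transverse hyperplanes in a CAT(0) cube complex forces the existence of arbitrarily large cubes: for any finite subfamily $J_{i_1}, \ldots, J_{i_k}$ of pairwise transverse hyperplanes, Sageev's cubes-from-transverse-hyperplanes result guarantees a $k$-cube in $X$ all of whose dual hyperplanes lie in this family. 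Taking $k \to \infty$, we either get arbitrarily large finite cubes — which, in a cube complex where cubes are the standard Euclidean cubes, still does not immediately give an \emph{infinite} cube, so one must be slightly careful here — but in fact the standard structure theory says a cube complex containing $k$-cubes for all $k$ need not contain an infinite cube.

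So the more robust route is: the infinite pairwise-transverse family actually yields an infinite cube directly. Given pairwise transverse hyperplanes $J_{i_1}, J_{i_2}, \ldots$, pick for each $n$ a vertex $x_n$ lying in the $n$-cube $Q_n$ dual to $J_{i_1}, \ldots, J_{i_n}$; using that these cubes are nested in a compatible way (each $Q_n$ is a face of a cube containing $Q_{n+1}$, after adjusting orientations consistently so the halfspace choices agree), one builds a coherent sequence whose union is a combinatorial cube on the infinite coordinate set $\{i_1, i_2, \ldots\}$, contradicting the hypothesis that $X$ has no infinite cube. The main obstacle I anticipate is exactly this last bookkeeping step — ensuring the finite cubes $Q_n$ can be chosen nested (not merely existing in isolation), which amounts to fixing a consistent choice of halfspace for each $J_{i_m}$ and checking that the sector $\bigcap_m J_{i_m}^+$ meets the complex in a nonempty face; this follows from the Helly property for halfspaces together with the transversality, but it is the part that needs care. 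Once the infinite cube is produced, the contradiction with the hypothesis completes the proof, so $\rho$ must have infinite $d_\infty$-diameter.
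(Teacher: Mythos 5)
The first half of your argument is sound and is a legitimate alternative to the paper's opening move: you bound the length of chains of pairwise non-transverse hyperplanes crossing $\rho$ by the $d_\infty$-diameter and invoke Mirsky/Dilworth to extract an infinite antichain, i.e.\ an infinite pairwise transverse family, whereas the paper instead pigeonholes on the level $k=d_\infty(\rho(0),N(J))$ to find an infinite family $\mathcal{H}_k$ and then proves directly that (a cofinite subfamily of) it is pairwise transverse. Either route produces infinitely many pairwise transverse hyperplanes crossing $\rho$.

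The genuine gap is your final step: it is \emph{not} true that an infinite family of pairwise transverse hyperplanes in a CAT(0) cube complex spans an infinite cube. The Helly property for convex subcomplexes (in particular for carriers and halfspaces) is a \emph{finite} Helly property; a nested decreasing sequence of nonempty convex subcomplexes such as $N(J_{i_1})\cap\cdots\cap N(J_{i_n})$ can perfectly well have empty total intersection. Likewise, Sageev's result only produces, for each $n$, \emph{some} $n$-cube dual to $J_{i_1},\ldots,J_{i_n}$; these cubes need not be nested or even close to one another, so the "coherent union" you describe cannot be formed in general. A concrete obstruction: the locally finite complex $\bigcup_{n\geq 0}\left([0,1]^n\times[n,n+1]\right)$ (a tower of boxes, each glued to the next along a codimension-one face) has infinitely many pairwise transverse "coordinate" hyperplanes, yet every vertex has finite degree, so it contains no infinite cube. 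What is actually needed -- and what the paper supplies -- is an infinite pairwise transverse subfamily whose carriers all contain a \emph{common vertex} $x$; only then do the edges at $x$ dual to these hyperplanes span an infinite cube. The paper gets this by choosing the minimal level $k$ with $\mathcal{H}_k$ infinite, taking $x\in\rho$ beyond all hyperplanes of the lower levels, and showing that any $J\in\mathcal{H}_k$ not separating $\rho(0)$ from $x$ must satisfy $x\in N(J)$ (otherwise a hyperplane separating $x$ from $N(J)$ would force $d_\infty(\rho(0),N(J))\geq k+1$). Your extraction would have to be supplemented by an argument of this kind; as written, the passage from "infinite antichain" to "infinite cube" is a non sequitur.
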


\begin{proof}
Assume that $\rho$ has finite diameter with respect to $d_\infty$. For every $i \geq 0$, set
	$$\mathcal{H}_i:= \left\{ \text{hyperplanes $J$ crossing $\rho$ and satisfying $d_\infty(\rho(0),N(J))=i$} \right\}, $$
 Because a geodesic cannot cross a hyperplane twice \cite{Sageev-ends_of_groups}, there must exist infinitely many hyperplanes crossing $\rho$, so some of the $\mathcal{H}_i$ must be infinite. Let $m$ denote the smallest index such that $\mathcal{H}_m$ is infinite. Because $\mathcal{H}_1, \ldots, \mathcal{H}_{m-1}$ are all finite, we can fix a vertex $x \in \rho$ sufficiently far away so that all the hyperplanes in $\mathcal{H}_1,\ldots, \mathcal{H}_{m-1}$ separate $\rho(0)$ and $x$. Finally, let $\mathcal{H}$ denote the collection of the hyperplanes in $\mathcal{H}_m$ that do not separate $\rho(0)$ and $x$.
	
	\begin{fact}
		The hyperplanes in $\mathcal{H}$ are pairwise transverse.
	\end{fact}
	
	\noindent
	Assume for contradiction that $J_1,J_2 \in \mathcal{H}$ are distinct and non-transverse. Up to reindexing our hyperplanes, assume that $J_1$ separates $\rho(0)$ from $J_2$. Then 
	$$d_\infty(\rho(0),N(J_2)) \geq d_\infty(\rho(0), N(J_1)) +1 >m,$$
	contradicting the fact that $J_2$ belongs to $\mathcal{H}_m$.
	
	\begin{fact}
		For every $J \in \mathcal{H}$, $x \in N(J)$.
	\end{fact}
	
	\noindent
	Assume for contradiction that there exists some $J \in \mathcal{H}$ such that $x \notin N(J)$. As a consequence, there must exist a hyperplane $H$ separating $x$ from $N(J)$ \cite[Proposition~2.17]{MR2413337}. Because $x$ belongs to $\rho$ and that $J$ crosses $\rho$, this new hyperplane must also cross $\rho$. A fortiori, $H$ cannot cross $\rho$ between $\rho(0)$ and $x$, which implies that $H$ does not belong to $\mathcal{H}_1, \ldots, \mathcal{H}_{m-1}$, i.e.\ $d(\rho(0),N(H)) \geq k$. But $H$ separates $x$ from $J$, so
	$$d_\infty(\rho(0), N(J)) \geq d_\infty(\rho(0),N(H))+1 \geq m+1,$$
	contradicting the fact that $J$ belongs to $\mathcal{H}_m$.
	
	\medskip \noindent
	The combination of the two facts shows that the edges starting from $x$ and crossing the hyperplanes in $\mathcal{H}$ span an infinite cube.
\end{proof}

\begin{proof}[Proof of Theorem \ref{thm:PurelyEll}.]
	As a consequence of Lemma \ref{lem:ConvexHullOrbit}, we can assume without loss of generality that $X$ coincides with the convex hull of the orbit $G \cdot x_0$ for some $x_0 \in X$. We distinguish two cases, depending on whether $X$ is bounded or unbounded with respect to $d_\infty$.
	
	\medskip \noindent
	\textbf{Case 1:} $X$ is unbounded with respect to $d_\infty$.
	
	\medskip \noindent
Let $M$ denote the number of $G$-orbits of hyperplanes. We know that there exist two vertices $x,y \in X$ that are separated by more than $3M$ pairwise non-transverse hyperplanes. In this collection, we can find three hyperplanes that belong to the same $G$-orbit. In other words, there exist two elements $g,h \in G$ and one hyperplane $J$ such that $gJ$ separates $J$ and $hJ$. Let $J^+$ denote the halfspace delimited by $J$ that contains $hJ$. If $gJ^+ \subset J^+$, then $\{g^nJ^+ \mid n \geq 0\}$ defines a decreasing sequence of halfspaces, implying that $g$ has unbounded orbits. Similarly, if $hJ^+ \subset J^+$ then $h$ has unbounded orbits. Otherwise, if the inclusions $gJ^+ \subset J^+$ and $hJ^+ \subset J^+$ do not hold, then we must have $gh^{-1} \cdot hJ^+ \subset hJ^+$, proving that $gh^{-1}$ has unbounded orbits. Because $G$ only contains elements with bounded orbits, we conclude that this first case cannot happen.
	
	\medskip \noindent
	\textbf{Case 2:} $X$ is bounded with respect to $d_\infty$.
	
	\medskip \noindent
	As a consequence, given a halfspace $D$ delimited by a hyperplane $J$, we can define the \emph{depth} of $D$ by 
	$$p(D):= \max \left\{ d_\infty(x,N(J)) \mid x \in D \right\}.$$
	A hyperplane is \emph{balanced} if the two halfspaces it delimits have the same depth, and it is \emph{unbalanced} otherwise. If $J$ is an unbalanced hyperplane, we denote by $J^+$ the hyperplane it delimits that has the larger depth; and if it is balanced, we denote by $p(J)$ the common depth of the two halfspaces it delimits.
	
	\begin{fact}
		If $J_1,J_2$ are two unbalanced hyperplanes, then $J_1^+ \cap J_2^+ \neq \emptyset$.
	\end{fact}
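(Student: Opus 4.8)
The plan is to argue by contradiction and to produce a cyclic chain of strict inequalities among the four depths $p(J_1^+),p(J_1^-),p(J_2^+),p(J_2^-)$. Assume that $J_1,J_2$ are unbalanced but $J_1^+\cap J_2^+=\emptyset$; in particular $J_1\neq J_2$. The first observation is purely combinatorial: since the quarter-space $J_1^+\cap J_2^+$ is empty, $J_1$ and $J_2$ are non-transverse, and hence $J_1^+\subseteq J_2^-$ and, symmetrically, $J_2^+\subseteq J_1^-$.

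The core of the argument is the following monotonicity property, which I would isolate as a claim: \emph{if $J,K$ are distinct hyperplanes and $J^{\varepsilon}\subseteq K^{\delta}$ for some choice of halfspaces, then $p(K^{\delta})\geq p(J^{\varepsilon})+1$.} To prove this, choose a vertex $x\in J^{\varepsilon}$ with $d_\infty(x,N(J))=p(J^{\varepsilon})=:k$ together with a family $H_1,\dots,H_k$ of pairwise non-transverse hyperplanes separating $x$ from $N(J)$. It then suffices to check that $H_1,\dots,H_k,J$ are $k+1$ pairwise non-transverse hyperplanes separating $x$ from $N(K)$, since $x\in J^{\varepsilon}\subseteq K^{\delta}$ would then give $p(K^{\delta})\geq d_\infty(x,N(K))\geq k+1$. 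First, $J^{\varepsilon}\subseteq K^{\delta}$ forces $J$ and $K$ to be non-transverse, so $J$ does not cross $N(K)$; since $N(K)$ has vertices on both sides of $K$ and $K^{\bar{\delta}}\subseteq J^{\bar{\varepsilon}}$, the whole carrier $N(K)$ lies in $J^{\bar{\varepsilon}}$, and therefore $J$ separates $x$ from $N(K)$. Second, each $H_i$ does not cross $N(J)$, hence does not cross $J$, so $H_i$ and $J$ are non-transverse; writing $H_i^+$ for the side of $H_i$ containing $x$, one has $N(J)\subseteq H_i^-$, and since $N(J)$ meets both $J^{\varepsilon}$ and $J^{\bar{\varepsilon}}$ while $x\in H_i^+\cap J^{\varepsilon}$, the only surviving one among the four possible containments between a halfspace of $H_i$ and a halfspace of $J$ is $H_i^+\subseteq J^{\varepsilon}$, equivalently $J^{\bar{\varepsilon}}\subseteq H_i^-$; as $N(K)\subseteq J^{\bar{\varepsilon}}\subseteq H_i^-$, this shows $H_i$ separates $x$ from $N(K)$. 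Finally $J\notin\{H_1,\dots,H_k\}$ (it does not separate $x$ from $N(J)$), and we have just seen that $J$ is non-transverse to each $H_i$; together with the $H_i$ being pairwise non-transverse, this establishes the claim.

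Granting the claim, apply it to $J_1^+\subseteq J_2^-$ to get $p(J_2^-)\geq p(J_1^+)+1$, and to $J_2^+\subseteq J_1^-$ to get $p(J_1^-)\geq p(J_2^+)+1$. Since $J_1$ and $J_2$ are unbalanced we also have $p(J_1^+)>p(J_1^-)$ and $p(J_2^+)>p(J_2^-)$. Chaining these inequalities yields
\[
p(J_1^-)>p(J_2^+)>p(J_2^-)>p(J_1^+)>p(J_1^-),
\]
which is absurd, and this completes the proof. (It is used along the way that $p$ takes finite values so that the defining maxima exist; this is exactly the standing hypothesis of Case~2, namely that $X$ is bounded with respect to $d_\infty$.)

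I expect the only delicate point to be the monotonicity claim, and within it the verification that every hyperplane separating $x$ from $N(J)$ also separates $x$ from $N(K)$. That step rests on the standard dichotomy for non-transverse hyperplanes (one halfspace of one is contained in a halfspace of the other) combined with the fact that a carrier meets both halfspaces of its own hyperplane; the resulting inequality $d_\infty(x,N(K))\geq d_\infty(x,N(J))+1$ is of exactly the same flavour as the inequalities already exploited in the proof of Lemma~\ref{lem:RayDiamInfty}. Everything else is routine bookkeeping with halfspaces.
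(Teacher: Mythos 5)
Your proof is correct and follows essentially the same route as the paper: both argue by contradiction from the nesting $J_1^+\subseteq J_2^-$, $J_2^+\subseteq J_1^-$ and exploit the fact that the depth strictly increases when one halfspace is nested inside another, contradicting unbalancedness. The only differences are cosmetic: you prove the key strict inequality $p(K^{\delta})\geq p(J^{\varepsilon})+1$ in full (the paper asserts the corresponding step $d_\infty(x,N(J_2))>d_\infty(x,N(J_1))$ without detail), and you apply it twice symmetrically to get a cyclic chain instead of normalising to $p(J_1^+)\geq p(J_2^+)$ and applying it once.
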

	
	\noindent
	Assume for contradiction that $J_1^+ \cap J_2^+ = \emptyset$. Up to reindexing our hyperplanes, assume that $p(J_1^+) \geq p(J_2^+)$ and fix a vertex $x \in J_1^+$ satisfying $d_\infty(x,N(J_1)) = p(J_1^+)$. Because $J_1$ separates $x$ from $J_2$, we must have
	$$p(J_2^-) \geq d_\infty(x,N(J_2)) > d_\infty(x,N(J_1)) = p(J_1^+) \geq p(J_2^+)$$
	where $J_2^-$ denotes the complement of $J_2^+$. Thus, we obtain a contradiction with the definition of $J_2^+$.
	
	\begin{fact}\label{fact:BalancedTransverse}
		Any two balanced hyperplanes are transverse.
	\end{fact}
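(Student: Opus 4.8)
The plan is to argue by contradiction. Suppose that $J_1$ and $J_2$ are balanced hyperplanes that are not transverse; I will deduce a numerical inequality between the depths of $J_1$ and $J_2$ together with its reverse, which is absurd.

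The first step is to unwind non-transversality. It provides a halfspace $A$ delimited by $J_1$ and a halfspace $B$ delimited by $J_2$ with $A \cap B = \emptyset$, so that, writing $A^{c},B^{c}$ for the complementary halfspaces, $A \subseteq B^{c}$ and $B \subseteq A^{c}$. The extra ingredient I would record here is that moreover $N(J_1) \subseteq B^{c}$ and $N(J_2) \subseteq A^{c}$: any vertex of $N(J_1)$ is an endpoint of an edge dual to $J_1$, the two endpoints of such an edge lie on the same side of $J_2$ (the edge being dual to $J_1 \ne J_2$), and the endpoint lying in $A \subseteq B^{c}$ forces both of them into $B^{c}$.

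The heart of the argument is the claim that, writing $p_i := p(J_i)$ for the common depth of the two halfspaces of $J_i$, one has $p_1 \ge p_2 + 1$. To see this, pick a vertex $w \in B$ realizing $p(B) = d_\infty(w, N(J_2)) = p_2$ and a family $\mathcal F$ of $p_2$ pairwise non-transverse hyperplanes each separating $w$ from $N(J_2)$ --- recall that $d_\infty(w, N(J_2))$ is precisely the maximal size of such a family, by convexity of the carrier and the gate property. For $H \in \mathcal F$, with halfspaces $H^{-} \ni w$ and $H^{+} \supseteq N(J_2)$, I claim $B^{c} \subseteq H^{+}$: since $J_2 \subseteq N(J_2) \subseteq H^{+}$, the hyperplanes $H$ and $J_2$ are non-transverse, so one of the four regions determined by $H$ and $J_2$ is empty; but $w \in H^{-} \cap B$, and $N(J_2)$, straddling $J_2$, meets both $H^{+} \cap B$ and $H^{+} \cap B^{c}$, so the empty region must be $H^{-} \cap B^{c}$, i.e.\ $B^{c} \subseteq H^{+}$. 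In particular $N(J_1) \subseteq B^{c} \subseteq H^{+}$, so each such $H$ separates $w$ from $N(J_1)$. Since in addition $J_2$ separates $w$ from $N(J_1)$ (as $w \in B$ and $N(J_1) \subseteq B^{c}$), is non-transverse to every member of $\mathcal F$, and does not itself lie in $\mathcal F$ (it does not separate $w$ from $N(J_2)$, which straddles $J_2$), the family $\mathcal F \cup \{J_2\}$ consists of $p_2 + 1$ pairwise non-transverse hyperplanes separating $w$ from $N(J_1)$. As $w \in B \subseteq A^{c}$, this gives $p_1 = p(A^{c}) \ge d_\infty(w, N(J_1)) \ge p_2 + 1$.

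Finally, the hypothesis $A \cap B = \emptyset$ is symmetric under interchanging $(J_1,A)$ with $(J_2,B)$ --- the inclusions $N(J_1) \subseteq B^{c}$ and $N(J_2) \subseteq A^{c}$, and $B \subseteq A^{c}$ and $A \subseteq B^{c}$, simply swap roles --- so the very same argument yields $p_2 \ge p_1 + 1$, and then $p_1 \ge p_2 + 1 \ge p_1 + 2$, a contradiction. I expect the delicate point to be the middle step: the four-region bookkeeping that promotes "$H$ separates $w$ from $N(J_2)$" to "$H$ separates $w$ from $N(J_1)$", and the correct use of the characterization $d_\infty(w, N(J)) = \max\{\,|\mathcal G| : \mathcal G$ a set of pairwise non-transverse hyperplanes, each separating $w$ from $N(J)\,\}$, which relies on the standard fact that the gate of $w$ in a convex subcomplex is separated from $w$ by exactly the hyperplanes that separate $w$ from that subcomplex.
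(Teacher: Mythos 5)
Your proof is correct and follows essentially the same route as the paper: pick a vertex of maximal depth in the halfspace of one balanced hyperplane facing away from the other, and show its $d_\infty$-distance to the other carrier is strictly larger, contradicting balancedness. The only cosmetic differences are that you run the argument twice by symmetry instead of assuming $p(J_1)\geq p(J_2)$ without loss of generality, and that you spell out the strict inequality (via the family $\mathcal F\cup\{J_2\}$) that the paper asserts in one line.
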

	
	\noindent
	Assume for contradiction that there exist two balanced hyperplanes $J_1,J_2$ that are not transverse. Up to reindexing our hyperplanes, assume that $p(J_1) \geq p(J_2)$. Fix a vertex $x$ in the halfspace delimited by $J_1$ disjoint from $J_2$ that satisfies $d_\infty(x,N(J_1)) = p(J_1)$. Because $J_1$ separates $x$ from $J_2$, we must have
	$$p(J_2) \geq d_\infty(x,N(J_2)) > d_\infty(x,N(J_1)) = p(J_1) \geq p(J_2),$$
	a contradiction.
	
	\medskip \noindent
	If the intersection
	$$\bigcap\limits_{\text{$J$ unbalanced}} J^+$$
	is non-empty, then it defines a $G$-invariant convex subcomplex as a consequence of the convexity of halfspaces \cite{Sageev-ends_of_groups}. Moreover, according to Fact \ref{fact:BalancedTransverse}, its hyperplanes are pairwise transverse, so (as a consequence of \cite[Proposition~2.1]{sageev_lecturenotes} for instance) it must be a cube, possibly infinite-dimensional. But we know by assumption that $X$ does not contain an infinite-dimensional cube, so we conclude that $G$ stabilises a finite cube. A fortiori, its orbits are bounded, as desired.
	
	\medskip \noindent
	Next, we assume that the previous intersection is empty and we want to find a contradiction. This will conclude the proof of our theorem. We construct a sequence of vertices $(x_i)_{i \geq 0}$ and a sequence of unbalanced hyperplanes $(J_i)_{i\geq 1}$ in the following way.
	\begin{itemize}
		\item We fix an arbitrary vertex $x_0 \in X$.
		\item If $x_0,\ldots, x_i$ and $J_1,\ldots, J_i$ are defined, we fix a, unbalanced hyperplane $J_{i+1}$ satisfying $x_i \notin J_{i+1}^+$ and we define the vertex $x_{i+1}$ as the projection of $x_i$ onto $J_1^+ \cap \cdots \cap J_{i+1}^+$. (Here, the projection refers to the nearest-point projection; it is well-defined according to \cite[Lemma~13.8]{MR2377497} as soon as our intersection is non-empty, which follows from the Helly property on convex subcomplexes \cite[Corollary~2.22]{MR2413337}.)
	\end{itemize}
	Observe that, by construction, we have $x_i \in \bigcap\limits_{k=1}^i J_k^+ \backslash J_{i+1}^+$ for every $i \geq 0$. For every $i \geq 0$, fix a geodesic $[x_i,x_{i+1}]$ between $x_i,x_{i+1}$; and notice that the concatenation $[x_0,x_1] \cup [x_1, x_2] \cup \cdots$ defines a geodesic ray. Indeed, let $J$ be hyperplane crossing this infinite path. Let $k$ denote the smallest index such that $J$ crosses $[x_k,x_{k+1}]$. Because $x_{k+1}$ is defined as the projection of $x_k$ onto $J_1^+ \cap \cdots \cap J_{k+1}^+$, $J$ must be disjoint from this intersection \cite[Lemma~13.8]{MR2377497}. But we know that $[x_{k+1},x_{k+2}] \cup [x_{k+2},x_{k+3}] \cup \cdots$ lies in this intersection. Consequently, $J$ cannot cross the path $[x_0,x_1] \cup [x_1,x_2] \cup \cdots$ twice, proving that it defines a geodesic \cite{Sageev-ends_of_groups}. As a consequence of Lemma \ref{lem:RayDiamInfty}, $X$ must have infinite diameter with respect to $d_\infty$, which contradicts our assumption.
\end{proof}

\addcontentsline{toc}{section}{References}
\bibliographystyle{amsalpha}
{\footnotesize\bibliography{bibliography_cu}}
\Address
\end{document}